\documentclass[11pt]{article}
\usepackage[T1]{fontenc}
\usepackage{lmodern}
\usepackage[a4paper,margin=1in]{geometry}
\usepackage{amsmath,amssymb,amsthm,mathtools}
\usepackage{authblk}
\usepackage{microtype}
\usepackage{array}
\usepackage{enumitem}
\usepackage[colorlinks=true,linkcolor=blue,citecolor=blue,urlcolor=blue]{hyperref}
\hypersetup{
 pdftitle={On Erd\H{o}s--Ko--Rado and Hilton--Milner Theorems for Direct Products},
 pdfauthor={Tian Yao, Mengyu Cao and Kaishun Wang},
 pdfsubject={Linear EKR thresholds and a multipart t-Hilton--Milner classification},
 pdfkeywords={Erdos-Ko-Rado theorem, Hilton-Milner theorem, direct product, t-intersecting family, generating set, shifting, intersection shadow}
}

\newtheorem{theorem}{Theorem}[section]
\newtheorem{lemma}[theorem]{Lemma}
\newtheorem{conjecture}[theorem]{Conjecture}
\newtheorem{proposition}[theorem]{Proposition}
\newtheorem{corollary}[theorem]{Corollary}
\newtheorem{problem}[theorem]{Problem}
\newtheorem*{claim}{Claim}
\theoremstyle{definition}

\theoremstyle{remark}

\newcommand{\cA}{\mathcal A}
\newcommand{\cB}{\mathcal B}
\newcommand{\cC}{\mathcal C}
\newcommand{\cD}{\mathcal D}
\newcommand{\cE}{\mathcal E}
\newcommand{\cF}{\mathcal F}
\newcommand{\cG}{\mathcal G}
\newcommand{\cH}{\mathcal H}
\newcommand{\cI}{\mathcal I}
\newcommand{\cJ}{\mathcal J}

\newcommand{\cP}{\mathcal P}
\newcommand{\cQ}{\mathcal Q}
\newcommand{\cR}{\mathcal R}
\newcommand{\cT}{\mathcal T}
\newcommand{\cU}{\mathcal U}
\newcommand{\cV}{\mathcal V}

\newcommand{\bs}{\boldsymbol}

\title{On Erd\H{o}s--Ko--Rado and Hilton--Milner Theorems for Direct Products}
\author[1]{Tian Yao\thanks{E-mail: \texttt{tyao@hist.edu.cn}. Supported by Natural Science Foundation of Henan (262300422621).}}
\author[2]{Mengyu Cao\thanks{Corresponding author. E-mail: \texttt{myucao@ruc.edu.cn}. Supported by the National Natural Science Foundation of China (12301431) and Beijing Natural Science Foundation (1262010).}}
\author[3]{Kaishun Wang\thanks{E-mail: \texttt{wangks@bnu.edu.cn}. Supported by the National Natural Science Foundation of China (12131011, 12571347) and Beijing Natural Science Foundation (1252010, 1262010).}}
\affil[1]{School of Mathematical Sciences, Henan Institute of Science and Technology, Xinxiang 453003, China}
\affil[2]{Institute for Mathematical Sciences, Renmin University of China, Beijing 100086, China}
\affil[3]{Laboratory of Mathematics and Complex Systems (Ministry of Education), School of
	Mathematical Sciences, Beijing Normal University, Beijing 100875, China}
\date{}

\begin{document}
\maketitle

\begin{abstract}
	We investigate $t$-intersecting families in direct-product set systems obtained by
	prescribing the number of selected elements in each part of a partitioned
	ground set. 
	For a finite union of layers, we prove an Erd\H{o}s--Ko--Rado result under coordinatewise linear part-size conditions. 
	As an application, we establish a new range of parameters for which a conjecture of Frankl et al.\ [\emph{J. Combin. Theory Ser. A} \textbf{155} (2018), 493--502] holds. 
	Under an explicit polynomial large-part hypothesis,  we also characterize the maximum nontrivial $t$-intersecting families for the single-layer setting.  In particular, for $t=1$, this answers the problem
	of Kwan et al.\ [\emph{J. Combin. Theory Ser. A} \textbf{156} (2018), 44--60] asking for a
	classification of all extremal families, including the possible
	non-shifted maximizers.
\end{abstract}

\noindent\textbf{Keywords.}
Erd\H{o}s--Ko--Rado theorem; direct products; $t$-intersecting families;
Hilton--Milner theorem; shifting.\\
\noindent\textbf{MSC 2020.} 05D05, 05C65.

\section{Introduction}

For a positive integer $m$, write $[m]=\{1,\ldots,m\}$ and set $\binom m\ell=0$ whenever $\ell<0$ or $\ell>m$.  
For a finite set $Y$ and an integer $r\ge0$, write $\binom Yr$ for the collection of all $r$-subsets of $Y$.  
A family $\cF\subseteq2^Y$ is \emph{$t$-intersecting}, where $t$ is omitted when $t=1$, if $|F\cap F'|\ge t$ for any $F,F'\in\cF$. 
It is \emph{trivial} if all its members contain a common $t$-set.  
The members of a fixed ambient family containing a given $t$-set form a \emph{full $t$-star}.

The Erd\H{o}s--Ko--Rado theorem \cite{EKR} initiated the extremal theory
of intersecting families.  Frankl \cite{FranklEKR} and Wilson \cite{Wilson} determined the sharp
$t$-star threshold.  
The prototype for nontrivial $t$-intersecting families is the Hilton--Milner theorem \cite{FranklHM,HiltonMilner}. 
Ahlswede and Khachatrian \cite{AKnontrivial,AKcomplete} subsequently solved the complete intersection
problem and its nontrivial analogue.  Further
structural results for large nontrivial $t$-intersecting families appear
in \cite{CLW}.

In this paper, we study the intersection problem in the direct product setting.
Let $X$ be a finite set, and
\[
X=X_1\mathbin{\dot\cup}\cdots\mathbin{\dot\cup}X_p,
\qquad |X_i|=n_i,
\]
where the blocks $X_i$ are nonempty. For $\bs k=(k_1,\ldots,k_p)$ with $0\le k_i\le n_i$ for every $i\in[p]$, put
\begin{equation*}
	\cH(\bs k)=
	\{F\subseteq X:|F\cap X_i|=k_i\text{ for every }i\in[p]\}.
\end{equation*}
Katona \cite{KatonaProduct} introduced a multilayer setting. Let  $R\subseteq\prod_{i=1}^p([n_i]\cup\{0\})$ be nonempty, and write
\begin{equation}\label{eq:union-layers}
	\cH_R=\bigcup_{\bs r\in R}\cH(\bs r),\qquad
	b_i=\max_{(r_1,\dots,r_p)\in R}r_i.
\end{equation}
Without loss of generality, we always assume that $b_i>0$ for every $i\in[p]$. 
In particular, when a problem involves only one fixed vector $\bs k$, we  assume that
all coordinates of $\bs k$ are positive.

\subsection{Erd\H{o}s--Ko--Rado results for direct products}

For the case $R=\{\bs k\}$, Frankl \cite{FranklProduct} determined the maximum size of intersecting subfamilies of $\cH(\bs k)$. 
In \cite{YLW}, Yao, Lv and Wang proved that every maximum
$t$-intersecting subfamily of $\cH(\bs k)$ is a full $t$-star when
$
n_i>2(t+1)pk_i^2$ for each $i\in[p]$. 
We also refer the reader to \cite{AAK}
for a local Ahlswede--Khachatrian type result. 

When $|R|\ge2$,  Katona \cite{KatonaProduct} treated the case $p=2$ and determined the maximum size of intersecting subfamilies of $\cH_R$. 
In \cite{YLW}, Yao, Lv and Wang proved that, for $R\subseteq\prod_{i=1}^p[n_i]$ and under an additional restriction on $t$, every maximum $t$-intersecting subfamily of $\cH_R$ is a full $t$-star provided that the part sizes satisfy certain explicit nonlinear lower bounds.

Our first contribution is a coordinatewise linear threshold under which every maximum $t$-intersecting subfamily of $\cH_R$ is a full $t$-star.

\begin{theorem}\label{thm:multi-layer}
	Let $p,t\ge 1$ and  $\max_{\bs r\in R}\sum_i r_i\ge t$. If $n_i\ge3ptb_i$ for each $i\in[p]$, 
	then every $t$-intersecting family $\cF\subseteq\cH_R$ satisfies
	\[
	|\cF|\le\max_{\substack{s_1+\cdots+s_p=t\\(s_1,\dots,s_p)\in\mathbb{Z}_{\ge0}^p}}\sum_{\bs r\in R}
	\prod_{i=1}^p\binom{n_i-s_i}{r_i-s_i}.
	\]
	Equality holds if and only if $\cF$ is a full $t$-star attaining the
	maximum on the right-hand side.
\end{theorem}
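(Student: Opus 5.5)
We reduce to shifted families and then use a generating-set (kernel) argument, comparing every non-star configuration with the best full $t$-star.

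\textbf{Step 1: shifting.} Inside each block $X_i$ we apply the standard $(j,\ell)$-shifts with $j,\ell\in X_i$, $j<\ell$. These shifts preserve $|F\cap X_i|$ for every $i$, so they map $\cH_R$ into itself. They also preserve $t$-intersection and cardinality. Hence we may assume $\cF$ is maximal and shifted within every block.

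For the equality case we must also go back from shifted to unshifted families. So we record that if a shift turns $\cF$ into a full $t$-star, then $\cF$ was itself a full $t$-star. The standard argument for this uses the slack coming from $n_i\ge 3ptb_i$.

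\textbf{Step 2: the generating family.} For a shifted $t$-intersecting $\cF$, let $\cG$ be the family of minimal sets $G$ such that every $F\in\cF$ contains some $G\in\cG$. We choose $\cG$ so that:
\begin{itemize}
\item[(a)] $\cG$ is $t$-intersecting;
\item[(b)] each $G\in\cG$ satisfies $|G\cap X_i|\le b_i$ for every $i$;
\item[(c)] each $G\in\cG$ lies inside the first $2b_i$ (say) elements of each block.
\end{itemize}
Property (c) follows from shiftedness: an element far out in $X_i$ can be replaced by a free earlier one. Then
\[
|\cF|\le\sum_{G\in\cG}\sum_{\bs r\in R}\prod_i\binom{n_i-g_i}{r_i-g_i},\qquad g_i=|G\cap X_i|.
\]

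\textbf{Step 3: the dichotomy.} There are two cases.
\begin{itemize}
\item[(i)] Some $t$-set $T$ lies in every $G\in\cG$. Then $\cF$ is contained in the full $t$-star of $T$. Its size is the sum on the right-hand side with $s_i=|T\cap X_i|$, and the bound follows.
\item[(ii)] $\cG$ is a nontrivial $t$-intersecting family. Then every $G$ has $|G|\ge t+1$, and the first-$2b_i$ localization bounds the number of such $G$ of each profile. A set $G$ with $\sum_i g_i=t+j$ contributes a product with $j$ extra factors of the form $\frac{r_i-s_i}{n_i-s_i}\le \frac{b_i}{n_i}\le\frac{1}{3pt}$ compared with a suitable $t$-subprofile $s\le g$.
\end{itemize}
The right comparison in case (ii) is the maximal star profile $s^*$. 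A star with profile $s$ can be moved toward $s^*$ while the ratios stay controlled by $b_i/n_i$. I would avoid comparing profiles directly. Instead I would bound each term by the star of a $t$-subset of $G$. I would then use a Frankl-type covering argument: pick $T\subset G_0$ with $|T|=t$ for a fixed $G_0\in\cG$. Every $F\in\cF$ meets $G_0$ in at least $t$ elements. So
\[
|\cF|\le\sum_{T'\in\binom{G_0}{t}}\#\{F\in\cF:F\supseteq T'\},
\]
and for $T'$ whose star is not all of $\cF$, we need one more element outside $T'$ from some other generator.

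\textbf{Main obstacle.} The hardest step is the counting in case (ii). The number of $t$-subsets is $\binom{|G_0|}{t}$, with $|G_0|\le \sum_i b_i$ possibly large. Each term gains only one factor $\max_i b_i(\ldots)/n_i\le 1/(3pt)$. Getting a strict inequality with only a linear threshold requires a careful bound. I would work per block: in block $i$, only $b_i$ choices appear, paired with a factor $b_i/n_i$. The sum over blocks then gives at most $p\cdot t\cdot\frac{1}{3pt}\cdot(\text{const})<1$ times the best star.

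Strictness in this estimate gives uniqueness: equality forces case (i) and $\cF$ equal to the full star. Combined with the unshifting remark from Step 1, this completes the equality characterization.
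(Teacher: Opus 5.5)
Your outline (shift, pass to a $t$-intersecting generating family, then split into a star case and a nontrivial case) matches the paper's. However, the counting in case (ii) cannot be completed with the localization you propose, and this is a genuine gap. Property (c) is fine in itself. For a maximal shifted family, the traces on $Y'=\bigcup_i Q_i(2b_i-1)$ are $t$-intersecting: a common point beyond position $2b_i-1$ can be traded for a free earlier point of the same block. Hence $\cF$ is generated inside $Y'$. But $Y'$ has about $2\sum_i b_i$ points, not $O(pt)$. Use the best available count: each generator $B$ contributes at most $q^{|B|-t}M_R$ with $q=\max_i b_i/n_i$, and there are at most $\binom{|Y'|}{u-t}$ generators of size $u$. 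This gives $|\cF|\le M_R\bigl((1+q)^{|Y'|}-1\bigr)$, which is below $M_R$ only when $q\sum_i b_i$ is bounded by a constant. That means thresholds of order $n_i\gtrsim b_i\sum_j b_j$, the quadratic regime of \cite{YLW}, not $n_i\ge 3ptb_i$.

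Your per-block heuristic shows the same problem. Pairing $b_i$ choices in block $i$ with a factor $b_i/n_i\le 1/(3pt)$ gives $\sum_i b_i/(3pt)$, which is not less than $1$ when $b_i\gg t$. To reach $p\cdot t\cdot\frac1{3pt}$ you need only $t$ choices per block. The Frankl-type covering sum over $\binom{G_0}{t}$ is worse still: it has up to $\binom{\sum_i b_i}{t}$ terms, each gaining only one small factor.

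The missing ingredient is exactly that $t$-per-block localization. For a \emph{maximum} shifted family with $n_i\ge(t+2)b_i$, the generating antichain can be taken inside $Y=\bigcup_i Q_i(t)$, so $|Y|\le pt$ (Lemma~\ref{lem:support}). This does not follow from shiftedness alone; it uses maximality. The paper argues as follows:
\begin{enumerate}
\item It chooses a reduced left-closed generating antichain minimizing $\sum_i m_i(\cB)$, with a lexicographic tie-break on generator sizes.
\item It shows that no generator can drop its last point $x_{i,m}$ without destroying $t$-intersection.
\item If $m>t$, a $(t+1)$-colouring (Lemma~\ref{lem:critical-graph}) yields a subfamily of the generators through $x_{i,m}$ that pairwise meet in more than $t$ points and carry at least a $1/(t+1)$ share of the weight.
\item Deleting $x_{i,m}$ from these enlarges each affected cell by a factor greater than $t+1$ (Lemmas~\ref{lem:critical} and~\ref{lem:ledger}), contradicting maximality.
\end{enumerate}

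Even with $|Y|\le pt$, the count still needs Katona's intersection-shadow theorem, $|\cB(u)|\le\binom{|Y|}{u-t}$. The trivial bound $\binom{pt}{u}$ already fails at $u=t+1$: for $p=2$, $t=5$, $q=1/30$ one gets $\binom{10}{6}/30=7>1$. Only with both ingredients does $M_R\bigl((1+1/(3pt))^{pt}-1\bigr)<M_R$ close the argument.

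Finally, the unshifting step you call standard needs care when there are several layers. The paper shows an orientation is constant using connectivity of products of Kneser graphs together with edges between layers, though this only requires $n_i>2b_i$, not the full slack.
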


In the special case $R=\{\bs k\}$, $p=1$ and $k_1\ge t+1$, the sharp classical threshold  for the full-$t$-star conclusion is
$n_1>(t+1)(k_1-t+1)$.
Consequently, Theorem~\ref{thm:multi-layer} is principally intended for $(|R|,p)\neq(1,1)$.  It remains open to determine the sharp part-size conditions guaranteeing
the full-star conclusion of Theorem~\ref{thm:multi-layer}, or, more generally,
to establish a corresponding complete intersection theorem.

Frankl, Han, Huang and Zhao \cite{FHHZ} posed the following conjecture, and pointed out that the conditions stated are necessary. Wagner \cite{Wagner} later showed that these necessary conditions are
not sufficient in general. Nevertheless, Theorem~\ref{thm:multi-layer} establishes a 
range of parameters in which its conclusion remains valid.

\begin{conjecture}[{\cite[Conjecture~12]{FHHZ}}]
	Let $p\ge1$ and  $n_i>a_i\ge0$ for each $i\in[p]$. Suppose 
	$a=\sum_i a_i$ and $|X|\ge k\ge1$ with $k\ge a$.  Define
	\[
	\cG(\bs a,k)=
	\left\{F\in\binom Xk:|F\cap X_i|\ge a_i
	\text{ for every }i\in[p]\right\}.
	\]
	If $n_i\ge2a_i$ for every $i$ and
	$n_i>k-a+a_i$ for all but at most one index with $a_i>0$, then every
	maximum intersecting subfamily of $\cG(\bs a,k)$ is a full $1$-star.
\end{conjecture}

To derive the promised range, let $R$ consist of the vectors
$\bs r\in\mathbb Z_{\ge0}^p$ with $\sum_i r_i=k$ and $n_i\ge r_i\ge a_i$. 
Then $\cG(\bs a,k)=\cH_R$. By $k-a+a_i\ge r_i$, we have $k-a+a_i\ge b_i$ and obtain the following corollary.

\begin{corollary}
	If
	$
	n_i\ge 3p(k-a+a_i)$ for $i\in[p]$, 
	then every maximum intersecting subfamily of
	$\cG(\bs a,k)$ is a full $1$-star.
\end{corollary}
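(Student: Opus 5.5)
The plan is to derive the corollary directly from Theorem~\ref{thm:multi-layer} with $t=1$, applied to the set $R$ of vectors $\bs r\in\mathbb Z_{\ge0}^p$ with $\sum_i r_i=k$ and $a_i\le r_i\le n_i$. As noted before the statement, $\cG(\bs a,k)=\cH_R$. Indeed, a $k$-subset $F$ with $|F\cap X_i|\ge a_i$ for all $i$ lies in $\cH(\bs r)$ for $r_i=|F\cap X_i|$, and that $\bs r$ belongs to $R$. The converse inclusion is immediate.

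Next I will check the hypotheses of Theorem~\ref{thm:multi-layer}. First, $R$ must be nonempty. Since $|X|\ge k\ge a$ and $n_i>a_i$, we can start from $\bs a$ and greedily add the surplus $k-a\le\sum_i(n_i-a_i)$ to coordinates without exceeding the $n_i$. Second, $\max_{\bs r\in R}\sum_i r_i=k\ge1=t$. Third, for every $\bs r\in R$ and every $i$ we have $r_i=k-\sum_{j\ne i}r_j\le k-\sum_{j\ne i}a_j=k-a+a_i$. Hence $b_i\le k-a+a_i$, and the assumption $n_i\ge3p(k-a+a_i)$ gives $n_i\ge 3p\cdot1\cdot b_i$.

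One technicality needs care: the paper's standing assumption that $b_i>0$ for every $i$. If some $b_i=0$, then every member of $R$ has $r_i=0$, so every $F\in\cG(\bs a,k)$ avoids $X_i$. In that case I would delete the block $X_i$ and apply Theorem~\ref{thm:multi-layer} on the remaining blocks with $p'<p$. The hypothesis $n_j\ge3p(k-a+a_j)\ge3p'b_j$ still holds for those blocks. A full $1$-star there is also a full $1$-star in $\cG(\bs a,k)$, since no member can contain an element of $X_i$.

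Theorem~\ref{thm:multi-layer} then says that every intersecting $\cF\subseteq\cG(\bs a,k)$ has size at most the maximum over unit vectors $\bs s=e_j$ of $\sum_{\bs r\in R}\prod_i\binom{n_i-s_i}{r_i-s_i}$. Equality holds only for a full $1$-star attaining that maximum. Each term in this maximum is exactly the size of the full $1$-star at a point of $X_j$. So some full $1$-star attains the bound, every maximum intersecting family attains it, and by the equality clause every maximum intersecting family is a full $1$-star.

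There is no real obstacle here. The only points needing care are the bound $b_i\le k-a+a_i$ and the degenerate blocks with $b_i=0$.
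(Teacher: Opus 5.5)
Your proof is correct and is essentially the paper's argument: apply Theorem~\ref{thm:multi-layer} with $t=1$ to $R=\{\bs r:\sum_i r_i=k,\ a_i\le r_i\le n_i\}$, using $b_i\le k-a+a_i$. Your additional checks are sound details that the paper leaves implicit: that $R\ne\varnothing$, and the removal of blocks with $b_i=0$, which can occur only when $k=a$ and $a_i=0$.
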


\subsection{Hilton--Milner results for direct products}\label{HMR}

We next turn to the Hilton--Milner problem in the single-layer setting.  
Kwan, Sudakov and Vieira \cite{KSV} investigated the nontrivial intersecting families 
when  the part sizes are sufficiently large, using the shifting
technique, and determined the maximum size of these families but did not classify the
original, non-shifted maximizers. They also posed the problem of characterizing the extremal families; see  \cite[Section 4]{KSV}. We formulate this problem as follows.
\begin{problem}\label{prob:KSV}
	For fixed $\bs k$ and sufficiently large part sizes
	$n_1,\ldots,n_p$, characterize all maximum nontrivial intersecting
	subfamilies of $\cH(\bs k)$, without assuming that the family is
	shifted.
\end{problem}

A natural extension is the corresponding nontrivial intersection problem
for $t$-intersecting families. Recent progress in this direction 
mainly concerns the transversal specialization
$k_1=\cdots=k_p=1$. 
Frankl and Nie \cite{FN} solved the problem for arbitrary part sizes when $p=t+2$, 
and for equal part sizes when $p\ge t+2$ and the common part size is sufficiently large. 
Hou and Hu \cite{HouHu} subsequently solved the symmetric transversal problem 
with $1\le t\le p-2$ and the common part size at least $2$. 
For asymmetric Cartesian products,
they reduced the case $t=1$ to a finite weighted downset problem and
obtained explicit formulas for $p=4,5,6$.

Our second contribution in this paper is Theorem~\ref{thm:closed-HM}, which, under an explicit large-part hypothesis, determines the maximum size of a nontrivial $t$-intersecting subfamily of $\cH(\bs k)$ for arbitrary choices of $\bs k$, possibly unequal part sizes, and general $t$, and classifies all families attaining this maximum.
In particular, when $t=1$, it answers Problem~\ref{prob:KSV}.

To prepare for our main result, we first introduce the candidate extremal families. For disjoint sets $T,Y\subseteq X$ with $|T|=t$ and $|Y|\ge2$, let
\[
	\cH^\times(T,Y)
	=\{F\in\cH(\bs k):T\subseteq F,\ F\cap Y\ne\varnothing\}
\cup\bigcup_{x\in T}
	\{F\in\cH(\bs k):F\cap T=T\setminus\{x\},\ Y\subseteq F\}.
\]
For a pair $(T,Y)$ as above,  its \emph{profile} is 
the pair
$(\bs a,\bs\ell)$ defined by
\[
a_i=|T\cap X_i|,\qquad
\ell_i=|Y\cap X_i|
\qquad(i\in[p]).
\]
Writing $\bs n=(n_1,\ldots,n_p)$, the cardinality of $\cH^\times(T,Y)$ depends only on $(\bs a,\bs\ell)$ and is given by
\[
	H^\times_{\bs n,\bs k}(\bs a,\bs\ell)
	=
	\prod_{i=1}^p\binom{n_i-a_i}{k_i-a_i}
	-\prod_{i=1}^p\binom{n_i-a_i-\ell_i}{k_i-a_i}
	+\sum_{i=1}^p a_i
	\binom{n_i-a_i-\ell_i}{k_i-a_i-\ell_i+1}
	\prod_{j\ne i}
	\binom{n_j-a_j-\ell_j}{k_j-a_j-\ell_j}.
\]

For the one-layer problem, call a set $C\subseteq X$
\emph{admissible} if it is contained in some member of
$\cH(\bs k)$; equivalently, 
$|C\cap X_i|\le k_i$ for every $i\in[p]$. 
A pair $(T,Y)$ of two disjoint subsets of $X$ is called \emph{feasible} if $T\cup\{y\}$ is admissible for every $y\in Y$
and $(T\setminus\{x\})\cup Y$ is admissible for at least one $x\in T$. 
Let $\cE_t(\bs k)$ consist of those profiles $(\bs a,\bs\ell)$ of feasible pairs $(T,Y)$ with $T\in\binom{X\setminus Y}{t}$   that satisfy one of the following mutually exclusive conditions:
\begin{enumerate}[label=\textup{(\roman*)},leftmargin=2.5em]
	\item $\sum_i\ell_i=2$;
	\item $\sum_i\ell_i\ge3$, and for every $i\in[p]$, $\ell_i\in\{0,k_i\}$ and $\ell_i\le k_i-a_i$;
	\item $\sum_i\ell_i\ge3$, and there is  a unique index $h$ with
	$1\le a_h\le k_h-1$ and $\ell_h=k_h-a_h+1$, while
	$\ell_i\in\{0,k_i-a_i\}$ for every $i\ne h$.
\end{enumerate}

Put $K=\sum_{i=1}^p k_i$ and $n_*=\min_{i\in[p]}n_i$. 
For $K\ge t+2$, define
\begin{equation*}
	\Lambda_{K,t}=\max\left\{
	3t^2(K-t+1)^2, 2(t+1)(K-t+1)^3
	\right\}.
\end{equation*}
We can now state our second main result.
\begin{theorem}
	\label{thm:closed-HM}
	Let $p,t,k_1,\dots,k_p\ge1$ and $K\ge t+2$. If  
	$
	n_*>K(1+\Lambda_{K,t})$, 
	then every nontrivial $t$-intersecting subfamily $\cF$ of $\cH(\bs k)$ satisfies
	\[
	|\cF|
	\le\max_{(\bs a,\bs\ell)\in\cE_t(\bs k)}
	H^\times_{\bs n,\bs k}(\bs a,\bs\ell).
	\]
	Equality holds if and only if $\cF=\cH^\times(T,Y)$, where $(T,Y)$ is a feasible pair whose profile is a maximizer in $\cE_t(\bs k)$.
\end{theorem}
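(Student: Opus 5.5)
We plan to follow the Hilton--Milner strategy via $t$-covers, and use the large-part hypothesis to reduce to a finite optimization over profiles. Let $\cF\subseteq\cH(\bs k)$ be nontrivial $t$-intersecting of maximum size, and set $\tau_t(\cF)$ to be the least size of a set meeting every member of $\cF$ in at least $t$ elements. Since $\cF$ is nontrivial, $\tau_t(\cF)\ge t+1$. The analysis splits into two cases according to whether $\tau_t(\cF)=t+1$ or $\tau_t(\cF)\ge t+2$.

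\textbf{Many covers are expensive.} First we bound families with no $t$-set of large degree. If $\cF$ has no $t$-cover of size $t+1$, a standard branching argument shows that every $F\in\cF$ contains one of at most $K^{t+2}$ sets of size $t+2$ forming a $t$-cover structure. Hence $|\cF|\le c_{K,t}\prod_i\binom{n_i}{k_i}\,n_*^{-(t+2)}$ up to part-size ratios. More precisely, fixing $S\in\binom{F_0}{t+2}$-type kernels, each kernel set $S$ lies in at most $\prod_i\binom{n_i-|S\cap X_i|}{k_i-|S\cap X_i|}$ members. Comparing this with the trivial lower bound $H^\times(\bs a,\bs\ell)$ for the best profile with $\sum\ell_i=2$ (case (i)), which is of order $\prod_i\binom{n_i-a_i}{k_i-a_i}\cdot (K/n_*)$, the condition $n_*>K(1+\Lambda_{K,t})$ makes the second quantity strictly larger. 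Here the factor $2(t+1)(K-t+1)^3$ is intended to absorb the count of kernels. We therefore reduce to the case where some $T\in\binom{X}{t}$ has large degree. Concretely, there is a $t$-set $T$ such that all but a controlled set $\cB=\{F\in\cF:T\not\subseteq F\}$ contain $T$, and $\cB\ne\varnothing$.

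\textbf{Structure around $T$.} Every $B\in\cB$ must meet every $F\supseteq T$ in $\cF$ in at least $t$ elements, and $|B\cap T|\le t-1$. Using the size bound again (the $3t^2(K-t+1)^2$ term), we will show that $|B\cap T|=t-1$ for every $B\in\cB$. We then show that the sets $Y_B=B\setminus T$ restricted to the "essential" part must generate a common set $Y$: the family $\{F\supseteq T\}$ is forced into $\{F\supseteq T: F\cap Y\neq\varnothing\}$ with $Y$ the intersection pattern, and $\cB\subseteq\bigcup_{x}\{F:F\cap T=T\setminus\{x\},Y\subseteq F\}$. This yields $\cF\subseteq\cH^\times(T,Y)$ for a feasible pair $(T,Y)$, and maximality gives equality. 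Feasibility is exactly the nonemptiness of the two constituent pieces.

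\textbf{Optimizing the profile (main obstacle).} It remains to show that among feasible profiles, $H^\times_{\bs n,\bs k}$ is maximized only within $\cE_t(\bs k)$. We will do this by local moves. If $\sum\ell_i\ge3$ and some $\ell_i$ is not at an extreme allowed value, we shrink $Y$ by one element in that part, or move it, and compare the first difference term against the lost star term. Consider the expansion
\[
\prod_i\binom{n_i-a_i}{k_i-a_i}-\prod_i\binom{n_i-a_i-\ell_i}{k_i-a_i}\approx\Bigl(\sum_i\ell_i\tfrac{k_i-a_i}{n_i}\Bigr)\prod_i\binom{n_i-a_i}{k_i-a_i}.
\]
This increases with $\ell$, while the correction term decreases by a factor of order $n_*$ per extra element of $Y$ unless $\ell_i$ hits $k_i-a_i$ (or $k_i-a_i+1$ at one index $h$), where the binomial $\binom{n_i-a_i-\ell_i}{k_i-a_i-\ell_i}$ becomes $1$ and the trade-off changes. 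Exhibiting these monotonicity comparisons with explicit error terms controlled by $n_*>K(1+\Lambda_{K,t})$, and verifying strictness so that the uniqueness statement holds, is the delicate part. Ties between distinct profiles in $\cE_t(\bs k)$ are allowed and are exactly covered by the statement.
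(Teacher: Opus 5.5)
Your outline (rule out $\tau_t(\cF)\ge t+2$ by counting, analyze the structure around a $t$-set, then optimize over profiles) follows the paper's architecture, but the decisive middle step is only asserted. You claim that the sets $B\setminus T$ ``generate a common set $Y$'', so that every member of $\cF$ containing $T$ must meet $Y$. The $t$-intersection property does not force this. A set $A\supseteq T$ with $A\cap Y=\varnothing$ only has to meet each $B\setminus T$ separately, and such sets can coexist with $\cB$ in a $t$-intersecting family. They can be excluded only through maximality, and the obvious comparison with $\cH^\times(T,Y)$ fails. That comparison gives up $\cA=\{A\in\cF:T\subseteq A,\ A\cap Y=\varnothing\}$ in exchange for sets containing the $(t-1+|Y|)$-sets $(T\setminus\{x\})\cup Y$. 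Once $|Y|$ is large, there are far fewer of the latter than $\cA$ can contain. The paper's Lemma~\ref{lem:HM-star-exception} defines $Y$ through the minimum admissible $t$-covers $\{T\cup\{y\}:y\in Y\}$. Assuming $\cA\neq\varnothing$, it compares $\cF$ instead with $\cH^\times(T,B_0\setminus T)$ for some $B_0\in\cB$. This family still contains every member of $\cF$ that contains $T$. It gains the sets of $\cH(\bs k)[T]\setminus\cF$ meeting $B_0\setminus T$, at least $(1-\beta\alpha)|\cH(\bs k)[T\cup\{z\}]|$ of them with $\alpha=K/(n_*-K)$, and it loses only $\cB\setminus\{B_0\}$. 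To bound $|\cB|$ against $|\cH(\bs k)[T\cup\{z\}]|$ when part sizes are very unequal, the paper takes $z$ in the smallest part meeting $\bigcup_{A\in\cA,B\in\cB}(A\cap B)\setminus T$. None of this appears in your proposal, so $\cF\subseteq\cH^\times(T,Y)$ remains unproved.

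The surrounding steps also have holes.
\begin{itemize}
\item A $t$-set contained in all but a small part of $\cF$ need not exist. If the minimum $t$-covers share no $t$-set, then $\cF=\cA_1^\times(Z)$ for a $(t+2)$-set $Z$. When $Z$ lies in one part, for each $T\subseteq Z$ the members missing $T$ make up about $t/(t+2)$ of $\cF$. This family is the maximizer, for instance, when $p=1$, $K=t+2$, $t\ge2$. So the smallness of $\cB$ cannot be what gives $|B\cap T|=t-1$. The paper treats this case separately via Proposition~\ref{prop:cover-geometry}(ii).
\item The paper also rules out a unique minimum cover, which guarantees $|Y|\ge2$; you do not address this.
\item A kernel count of order $K^{t+2}$ is not absorbed by $\Lambda_{K,t}$, which has bounded degree in $K$ and $t$. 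You need the sharper bound $\binom mt\beta^{m-t}S_m$ of Lemma~\ref{lem:HM-cover-growth}, together with the fact that this quantity decreases for $m\ge t+2$.
\item In the profile step, $H^\times_{\bs n,\bs k}$ is not monotone in $\ell_s$: for $p=1$, $K=t+2$, $t\ge2$ it drops from $\sum_i\ell_i=2$ to $3$. Asymptotic expansions with errors controlled by $n_*$ alone are unreliable, since $n_i/n_*$ is unbounded. Lemma~\ref{lem:HM-profile-endpoints} instead works exactly along one coordinate. The forward difference of $M_s(r)$ has the sign of a strictly increasing function of $r$, so every interior $r$ is strictly beaten by one of the endpoints $L_s,U_s$. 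This needs only $n_i\ge2k_i+2$.
\end{itemize}
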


The two boundary values $K=t$ and $K=t+1$ are elementary and are recorded
separately in Proposition~\ref{prop:HM-boundary}. Together with the theorem,
this covers every value of $K\ge t$ for which a nontrivial family may exist.

The rest of this paper is organized as follows. 
Section~\ref{sec:generators} develops the bounded-support theorem, and 
Section~\ref{sec:proofs} proves Theorem \ref{thm:multi-layer}.
Section~\ref{sec:nontrivial} proves Theorem~\ref{thm:closed-HM} and treats the boundary cases $K=t$ and $K=t+1$.

\section{Generating antichains for $t$-intersecting families}\label{sec:generators}

We now work in the ambient family $\cH_R$ defined in \eqref{eq:union-layers}. Write
\[
X_i=\{x_{i,1},\ldots,x_{i,n_i}\},\qquad
Q_i(s)=\{x_{i,j}:1\le j\le s\}\quad(i\in[p],\ 0\le s\le n_i).
\]
For $x=x_{i,a}$ and $y=x_{i,b}$ with $a<b$, define the \emph{$(x,y)$-shift}
of a family $\cF$ by replacing a member $F$ with
$(F\setminus\{y\})\cup\{x\}$ exactly when $y\in F$, $x\notin F$, and
the replacement is not already in $\cF$; all other members are left
unchanged.  Denote the resulting family by $S_{xy}(\cF)$.    
We remark that $S_{xy}(\cF)\subseteq\cH_R$, $|S_{xy}(\cF)|=|\cF|$ and  $S_{xy}(\cF)$ remains  $t$-intersecting whenever $\cF$ is $t$-intersecting. 
A family is \emph{shifted} if it is fixed by every such shift inside every part.

In the rest of this paper, for $B\subseteq X$, $\cG\subseteq2^X$ and $i\in[p]$, write
\[
B_i=B\cap X_i,\qquad
\cG[B]=\{G\in\cG:B\subseteq G\},\qquad
U_R(\cG)=\bigcup_{G\in\cG}\cH_R[G].
\]
We say that $B$ is \emph{$R$-admissible} if $\cH_R[B]\ne\varnothing$. 
A family $\cB$ of $R$-admissible sets is a \emph{generating family} for
$\cF\subseteq\cH_R$ if $\cF=U_R(\cB)$. In this case, every member of $\cB$ is called a \emph{generator} of $\cF$. Observe that if $|R|=1$, then $R$-admissibility agrees with the definition of admissibility in Section \ref{HMR}. 
For convenience, if there is no confusion, we simply say  admissible instead of $R$-admissible.

A maximum $t$-intersecting family is inclusion-maximal and consequently is an upset in the order induced on $\cH_R$, i.e., if $F\in\cF$, $F\subseteq G\in\cH_R$, then $G$ may be added without destroying $t$-intersection and therefore already
belongs to $\cF$.  Hence it has a generating antichain.

For two subsets $A,B\subseteq X$, write $A\preceq B$ if, in every block, $A_i$ can be obtained from $B_i$ by moving points to smaller labels without changing $|B_i|$.  Given a generating family $\cD$, let
\[L(\cD)=\{A:A\preceq D\text{ for some }D\in\cD\}\] and 
$L^*(\cD)$ consist of the inclusion-minimal  members of $L(\cD)$.
We call a generating antichain $\cB$ \emph{reduced left-closed} if
$\cB=L^*(\cB)$. 

The following elementary normalization of a generating family is standard for a single direct-product
layer; see \cite[Definitions 2.6--2.13 and Lemma 2.2]{AAK} for more details.  We include the
argument because here the ambient family is an arbitrary finite union of
layers.

\begin{lemma}\label{lem:reduced-left-closure}
	If $\cF\subseteq\cH_R$ is shifted and $\cD$ generates $\cF$, then
	$L^*(\cD)$ also generates $\cF$.  Moreover,
	\begin{equation}\label{eq:left-closure-property}
		A\preceq B\in L^*(\cD)
		\quad\Longrightarrow\quad
		A\in L^*(\cD)\ \text{or}\quad
		C\subsetneq A\ \text{for some }C\in L^*(\cD).
	\end{equation}
\end{lemma}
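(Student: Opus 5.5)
The plan has two parts: first show that $U_R(L(\cD))=\cF$, then pass from $L(\cD)$ to its minimal members $L^*(\cD)$. Since $\cD\subseteq L(\cD)$, we have $U_R(L(\cD))\supseteq U_R(\cD)=\cF$, so only the reverse inclusion needs work.

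For the reverse inclusion, take $A\preceq D$ with $D\in\cD$ and $H\in\cH_R[A]$. We must show $H\in\cF$. Write $A$ as obtained from $D$ by a finite sequence of single moves $x_{i,b}\mapsto x_{i,a}$ with $a<b$, each inside one block, so that every intermediate set is a genuine set. Induct on the number of moves. It then suffices to prove the one-step claim: if $D'$ satisfies $\cH_R[D']\subseteq\cF$, and $A'=(D'\setminus\{y\})\cup\{x\}$ with $y=x_{i,b}\in D'$, $x=x_{i,a}\notin D'$, $a<b$, then $\cH_R[A']\subseteq\cF$.

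For the one-step claim, let $H\supseteq A'$ with $H\in\cH_R$. There are two cases.

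\begin{itemize}
\item If $y\in H$, then $H\supseteq D'$, so $H\in\cF$ directly.
\item If $y\notin H$, put $H'=(H\setminus\{x\})\cup\{y\}$. This set lies in the same layer, since $x$ and $y$ are in the same block $X_i$. It contains $D'$, so $H'\in\cF$. Because $\cF$ is shifted, $S_{xy}(\cF)=\cF$. Since $y\in H'$ and $x\notin H'$, the shift would replace $H'$ by $H$ unless $H$ is already in $\cF$. Hence $H\in\cF$.
\end{itemize}

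This shows $\cH_R[A]\subseteq\cF$ for all $A\in L(\cD)$, so $U_R(L(\cD))=\cF$. One point to check: members of $L(\cD)$ are automatically $R$-admissible. Indeed, $\preceq$ preserves $|B_i|$ in every block, and $\cH_R[A]\ne\varnothing$ follows by applying the same relabelling to a member of $\cH_R[D]$.

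Next, pass to minimal members. Every $A\in L(\cD)$ contains some $C\in L^*(\cD)$, because the family is finite. Then $\cH_R[A]\subseteq\cH_R[C]$. Together with $L^*(\cD)\subseteq L(\cD)$, this gives $U_R(L^*(\cD))=U_R(L(\cD))=\cF$.

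For \eqref{eq:left-closure-property}, let $A\preceq B\in L^*(\cD)$. Since $B\preceq D$ for some $D\in\cD$ and $\preceq$ is transitive (compose the relabellings blockwise), we get $A\in L(\cD)$. There are two cases.

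\begin{itemize}
\item If $A$ is inclusion-minimal in $L(\cD)$, then $A\in L^*(\cD)$.
\item Otherwise, some $A''\in L(\cD)$ satisfies $A''\subsetneq A$. Choose a minimal such $A''$; then $C=A''\in L^*(\cD)$ and $C\subsetneq A$.
\end{itemize}

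The only step needing care is the single-move shifting argument together with the decomposition of $\preceq$ into single moves. The decomposition needs a clean justification: sort $A_i$ and $B_i$ increasingly, so that $\preceq$ means the $j$-th smallest element of $A_i$ is at most the $j$-th smallest of $B_i$. Then repeatedly move the largest element of $B_i\setminus A_i$ down to the largest element of $A_i\setminus B_i$. This keeps the dominance relation and terminates.
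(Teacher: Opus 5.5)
Your proof is correct and follows the paper's argument essentially step for step. The steps are the single left-move shifting argument (splitting on whether $y\in H$), iteration to get $U_R(L(\cD))=\cF$, passage to inclusion-minimal members, and the two-case argument for \eqref{eq:left-closure-property}. Your induction on the hypothesis $\cH_R[D']\subseteq\cF$, together with the explicit decomposition of $\preceq$ into single moves through genuine sets and the transitivity of $\preceq$, makes precise the ``iterating'' step that the paper leaves implicit.
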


\begin{proof}
	We first prove that $U_R(L(\cD))=\cF$.  
	The inclusion
	$\cF=U_R(\cD)\subseteq U_R(L(\cD))$ is immediate.  
	For the reverse inclusion, it is enough to consider one elementary left move.  
	Suppose $D\in\cD$ and $D'=(D\setminus\{y\})\cup\{x\}$, where $x,y$ lie in the same part,
	$x$ precedes $y$, $y\in D$ and $x\notin D$.  Let
	$G\in\cH_R[D']$.  If $y\in G$, then $D\subseteq G$, so $G\in\cF$.  If
	$y\notin G$, put $G'=(G\setminus\{x\})\cup\{y\}$.  The sets $G$ and
	$G'$ lie in the same layer, $D\subseteq G'$, and $G$ is the left shift of
	$G'$.  Hence $G'\in\cF$ and shiftedness gives $G\in\cF$.  Iterating over
	a sequence of elementary left moves proves
	$\cH_R[A]\subseteq\cF$ for every $A\in L(\cD)$, implying that $U_R(L(\cD))\subseteq U_R(\cD)=\cF$.

Since every  member of  $L(\cD)$ contains an inclusion-minimal  member of $L(\cD)$, we have $U_R(L(\cD))\subseteq U_R(L^*(\cD))$. It follows from $L^*(\cD)\subseteq L(\cD)$ that $U_R(L^*(\cD))\subseteq U_R(L(\cD))$. 
Consequently, $U_R(L(\cD))=U_R(L^*(\cD))$, and we further obtain \[\cF=U_R(\cD)=U_R(L(\cD))=U_R(L^*(\cD)).\]

Finally, suppose $A\preceq B\in L^*(\cD)$.  Then $A\in L(\cD)$, and
$A$ is admissible because it has the same number of points as $B$ in each
part.  If $A$ is inclusion-minimal among the admissible members of
$L(\cD)$, then $A\in L^*(\cD)$.  Otherwise $A$ contains a proper
admissible member of $L(\cD)$, which in turn contains some
$C\in L^*(\cD)$.  This gives $C\subsetneq A$ and proves
\eqref{eq:left-closure-property}.
\end{proof}

\begin{lemma}\label{lem:generators-intersect}
	Suppose $n_i\ge2b_i$ for each $i$.  If $\cF=U_R(\cB)$ is $t$-intersecting, then $\cB$ is $t$-intersecting.
\end{lemma}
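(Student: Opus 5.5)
Since $\cF=U_R(\cB)$ is $t$-intersecting, we argue by contradiction. Suppose there are generators $B,B'\in\cB$ with $|B\cap B'|\le t-1$. We will build $F\in\cH_R[B]$ and $F'\in\cH_R[B']$ with $F\cap F'=B\cap B'$. Both $F,F'$ lie in $\cF$, so this contradicts $t$-intersection. The only constraint is the part sizes, and the hypothesis $n_i\ge 2b_i$ should give exactly enough room.

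First we choose layers. Because $B$ and $B'$ are admissible, there are $\bs r,\bs r'\in R$ with $\cH(\bs r)[B]\ne\varnothing$ and $\cH(\bs r')[B']\ne\varnothing$, that is, $|B_i|\le r_i$ and $|B'_i|\le r'_i$ for every $i$. We then work one part $X_i$ at a time. Put $I_i=B_i\cap B'_i$. We need $F_i\supseteq B_i$ with $|F_i|=r_i$ and $F'_i\supseteq B'_i$ with $|F'_i|=r'_i$ such that $F_i\cap F'_i=I_i$. To get this, extend $B_i$ by $r_i-|B_i|$ new points taken from $X_i\setminus(B_i\cup B'_i)$, and extend $B'_i$ by $r'_i-|B'_i|$ further points from the remaining part of $X_i\setminus(B_i\cup B'_i)$, disjoint from the first ones. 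The new points avoid $B_i\cup B'_i$ and are pairwise disjoint, so $F_i\cap F'_i=I_i$ holds.

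The only thing to check is the count. We need
\[
(r_i-|B_i|)+(r'_i-|B'_i|)\le n_i-|B_i\cup B'_i| = n_i-|B_i|-|B'_i|+|I_i|,
\]
which is equivalent to $r_i+r'_i\le n_i+|I_i|$. This follows from $r_i+r'_i\le 2b_i\le n_i$. Setting $F=\bigcup_i F_i\in\cH(\bs r)\subseteq\cH_R$ and $F'=\bigcup_iF'_i\in\cH_R$, we get $F\cap F'=\bigcup_i I_i=B\cap B'$, of size at most $t-1$. This is the desired contradiction.

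There is no real obstacle here; the counting inequality is the crux, and it holds because each block has room for two disjoint layers. The case $B=B'$ needs a short remark. For a single generator, the same construction with $B'=B$ gives $F,F'\supseteq B$ and $F\cap F'=B$, so $|B|\ge t$ follows in the same way. Hence $\cB$ is $t$-intersecting in the strong sense, including $|B|\ge t$ for each $B\in\cB$.
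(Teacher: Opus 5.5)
Your proposal is correct and follows essentially the same route as the paper: you pick layers $\bs r,\bs r'$ admitting $B,B'$, then extend blockwise using fresh disjoint points. The count $r_i+r'_i-|B_i\cap B'_i|\le 2b_i\le n_i$ yields $F,F'\in\cF$ with $F\cap F'=B\cap B'$. Your remark on the case $B=B'$ is a harmless addition that the paper's argument also covers implicitly, since it does not require $B\ne C$.
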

\begin{proof}
	Suppose $B,C\in\cB$ and $|B\cap C|<t$.  Choose 
	$\bs r=(r_1,\dots,r_p),\bs s=(s_1,\dots,s_p)\in R$ for which $\cH(\bs r)[B]$ and $\cH(\bs s)[C]$ are nonempty.  In block $X_i$, the fact $r_i+s_i-|B_i\cap C_i|\le2b_i\le n_i$ 
	allows us to extend $B_i$ to an $r_i$-set $F_i$ and $C_i$ to an $s_i$-set $G_i$ with
	$F_i\cap G_i=B_i\cap C_i$.  Then
	$F=\bigcup_i F_i$ and $G=\bigcup_i G_i$ belong to $\cF$ and satisfy
	$|F\cap G|=|B\cap C|<t$, a contradiction.
\end{proof}

For a generating family $\cB$, define its extent in $X_i$ by
\[
m_i(\cB)=
\max\{j:x_{i,j}\in B\text{ for some }B\in\cB\},
\]
with value $0$ if no generator meets $X_i$. 
The next lemma concerns the boundary element $x_{i,m_i(\cB)}$.

\begin{lemma}\label{lem:critical}
	Let $\cB=L^*(\cB)$ be a $t$-intersecting generating antichain of a shifted family, and put
	$m=m_i(\cB)$.  
	Suppose that $A,B\in\cB$ satisfy $x_{i,m}\in A\cap B$ and $|A\cap B|=t$.  
	Then \[
	Q_i(m)=A_i\cup B_i,\qquad m\le|A_i|+b_i-1.\]
\end{lemma}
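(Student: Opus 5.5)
Write $x=x_{i,m}$. The plan is to prove the two claims in order. First we show $Q_i(m)\subseteq A_i\cup B_i$ by contradiction, using the left-closure property \eqref{eq:left-closure-property} of Lemma~\ref{lem:reduced-left-closure} together with the $t$-intersecting hypothesis on $\cB$. The inequality then follows by counting.

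For the set equality, note that $A_i\cup B_i\subseteq Q_i(m)$ holds by the definition of $m=m_i(\cB)$. Suppose some $x_{i,j}$ with $j<m$ lies outside $A\cup B$. Put
\[
A'=(A\setminus\{x\})\cup\{x_{i,j}\}.
\]
Then $A'\preceq A\in\cB=L^*(\cB)$, so by \eqref{eq:left-closure-property} either $A'\in\cB$, or some $C\in\cB$ satisfies $C\subsetneq A'$.

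We compare with $B$. Since $x\in A\cap B$ and $x_{i,j}\notin B$, we get $|A'\cap B|=|A\cap B|-1=t-1$. In the second case $C\subseteq A'$, so $|C\cap B|\le t-1$ as well. Either way we obtain two members of $\cB$ meeting in fewer than $t$ elements. This contradicts the fact that $\cB$ is $t$-intersecting, which is a hypothesis here (alternatively it comes from Lemma~\ref{lem:generators-intersect}). One small point needs care: $C$ might equal $B$. But $|B\cap B|=|B|\ge t$ while $|C\cap B|\le t-1$, so $C\ne B$. Hence $Q_i(m)=A_i\cup B_i$.

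For the bound, we have
\[
m=|A_i\cup B_i|=|A_i|+|B_i|-|A_i\cap B_i|\le |A_i|+|B_i|-1,
\]
because $x\in A_i\cap B_i$. It remains to check $|B_i|\le b_i$. This holds since $B$ is admissible: $B$ lies in some member of $\cH(\bs r)$ with $\bs r\in R$, so $|B_i|\le r_i\le b_i$. This gives $m\le |A_i|+b_i-1$.

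The only delicate step is the left-closure case analysis. The point is that the replacement $A'$ has the same part sizes as $A$ and is therefore admissible, so \eqref{eq:left-closure-property} applies. We must also confirm that the resulting generator $A'$ or $C$ is distinct from $B$, which the size comparison above settles.
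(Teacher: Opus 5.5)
Your proof is correct and follows the paper's argument essentially step for step. You move $x_{i,m}$ down to a missing label $x_{i,j}$ and use \eqref{eq:left-closure-property} to get a generator inside the shifted set that meets $B$ in at most $t-1$ points; you then count $|A_i\cup B_i|$ using $x_{i,m}\in A_i\cap B_i$ and $|B_i|\le b_i$. Your explicit check that $C\neq B$ is a harmless extra detail that the paper leaves implicit.
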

\begin{proof}
	The definition of $m$ gives $A_i\cup B_i\subseteq Q_i(m)$.  
	For the reverse inclusion, suppose for contradiction that $x_{i,j}\in Q_i(m-1)\setminus(A_i\cup B_i)$.  
	Write $\widetilde A=(A\setminus\{x_{i,m}\})\cup\{x_{i,j}\}$. 
	By \eqref{eq:left-closure-property}, $\widetilde A$  contains some $C\in\cB$.  
	Then $|B\cap C|\le |B\cap\widetilde A|=t-1$, a contradiction to the $t$-intersection property of $\cB$.  
	Thus $A_i\cup B_i=Q_i(m)$.
	
	We have $|A_i\cap B_i|\ge1$ by $x_{i,m}\in A\cap B$, and $|B_i|\le b_i$ since $B$ is admissible. 
	Then 
	\[
	m=|A_i\cup B_i|=|A_i|+|B_i|-|A_i\cap B_i|\le |A_i|+b_i-1,
	\]
	as desired.
\end{proof}

Let $\cF=U_R(\cB)$ be shifted and $t$-intersecting, where $\cB=L^*(\cB)$ is a generating antichain. 
Fix $i\in[p]$ with $m=m_i(\cB)>0$. Set
\[
\cB^0=\{B\in\cB:x_{i,m}\notin B\},\qquad
\cB^1=\{B\in\cB:x_{i,m}\in B\}.
\]
Fix a total order on $\cB$ in which every member of $\cB^0$  precedes every member of $\cB^1$, and assign each $F\in\cF$ to the first generator contained in $F$.
For $A\in\cB^1$, let $w(A)$ be the number of sets assigned to $A$, and set
\[
W=\sum_{A\in\cB^1}w(A).
\]

\begin{lemma}\label{lem:ledger}
	In the preceding setup, assume $n_j\ge2b_j$ for every $j\in[p]$ and
	$n_i>2b_i$.  Suppose also that, for every $A\in\cB^1$, 
	its size is at least $t+1$ and
	$(\cB\setminus\{A\})\cup\{A^-\}$, where 
	$A^-=A\setminus\{x_{i,m}\}$, is not $t$-intersecting.  
	If $\cA\subseteq\cB^1$ and no two distinct members of
	$\cA$ have intersection exactly $t$, then 
	$
	\cB'=\cB^0\cup\{A^-:A\in\cA\}
	$ 
	is $t$-intersecting and
	\[
		|U_R(\cB')|
		\ge |\cF|-W+\left(\frac{n_i}{b_i}-1+\frac1{b_i}\right)\sum_{A\in\cA}w(A).
	\]
\end{lemma}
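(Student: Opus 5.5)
The plan has two parts: first show that $\cB'$ is $t$-intersecting, then prove the lower bound on $|U_R(\cB')|$ by an injection-with-bounded-fibres count.

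\emph{$t$-intersection of $\cB'$.} By Lemma~\ref{lem:generators-intersect} (we have $n_j\ge2b_j$), $\cB$ is $t$-intersecting. Pairs inside $\cB^0$ are untouched. Each $A^-$ is admissible, being a subset of an admissible set.

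\begin{itemize}
\item For $A\in\cA$ and $C\in\cB^0$: since $x_{i,m}\notin C$, we get $|A^-\cap C|=|A\cap C|\ge t$.
\item For distinct $A,A'\in\cA$: the hypothesis gives $|A\cap A'|\ge t+1$. Both sets contain $x_{i,m}$, so $|A^-\cap A'^-|\ge t$.
\item For a single $A$: $|A^-|=|A|-1\ge t$.
\end{itemize}

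Hence $\cB'$ is $t$-intersecting.

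\emph{Setting up the count.} Every set assigned to a member of $\cB^0$ lies in $U_R(\cB')$, which accounts for $|\cF|-W$ sets. Fix $A\in\cA$. By hypothesis, replacing $A$ by $A^-$ destroys $t$-intersection, so there is $C\in\cB$ with $|C\cap A^-|<t$. The preceding bullets force $x_{i,m}\in C$ and $|A\cap C|=t$. Lemma~\ref{lem:critical} then gives $Q_i(m)=A_i\cup C_i$.

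For each $F\in\cF$ assigned to $A$, I take $F$ itself together with all sets
\[
G=(F\setminus\{x_{i,m}\})\cup\{y\},\qquad y\in X_i\setminus(F_i\cup Q_i(m)).
\]
Each such $G$ lies in the same layer as $F$ and contains $A^-$, so $G\in U_R(\cB')$. Because $A_i\subseteq F_i$ and $|A_i\cap C_i|\ge1$, we have $|F_i\cup Q_i(m)|\le|F_i|+|C_i|-1\le 2b_i-1$. So each $F$ produces at least $n_i-2b_i+1$ sets $G$, all with $x_{i,m}\notin G$.

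\emph{Disjointness from the $\cB^0$ part.} I claim no such $G$ contains a generator $D\in\cB^0$. Every generator lies in $Q_i(m)$ within block $i$, and $y\notin Q_i(m)$, so $y\notin D$. Then $D\subseteq G$ would give $D\subseteq F$, contradicting that $F$ is assigned to $A$ and $\cB^0$ precedes $\cB^1$. Also $G\ne F'$ for every $F'$ assigned to some $A'\in\cA$, since $x_{i,m}\notin G$. Hence all new sets are disjoint from the $|\cF|-W$ old ones and from the retained sets $F$.

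\emph{Fibre count.} A given $G$ determines its preimage pair $(F,y)$ through $F=(G\setminus\{y\})\cup\{x_{i,m}\}$, where $y\in G_i\setminus Q_i(m)$. This holds uniformly over all $A\in\cA$. Since $G_i\supseteq A_i\setminus\{x_{i,m}\}$ and $A_i\subseteq Q_i(m)$, there are at most $b_i-|A_i|+1\le b_i$ choices of $y$. So each $G$ is hit at most $b_i$ times, and
\[
|U_R(\cB')|\ \ge\ |\cF|-W+\sum_{A\in\cA}w(A)\Bigl(1+\frac{n_i-2b_i+1}{b_i}\Bigr),
\]
which is exactly the claimed bound, since $1+\frac{n_i-2b_i+1}{b_i}=\frac{n_i}{b_i}-1+\frac1{b_i}$.

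The main obstacle is the disjointness step. The tempting choice of $y$ ranging over all of $X_i\setminus F_i$ gives the wrong argument, because then $G$ could contain a $\cB^0$-generator through $y$. Restricting $y$ to lie outside $Q_i(m)$, and using Lemma~\ref{lem:critical} to show that this loses only $b_i-1$ choices, is what makes the argument work. The hypothesis $n_i>2b_i$ guarantees that this restricted choice set is nonempty.
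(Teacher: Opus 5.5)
Your proof is correct, but it counts differently from the paper.

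\textbf{The paper's route.} The paper first proves the extra claim \eqref{eq:exact-prefix}, namely $F\cap Q_i(m)=A_i$ for every $F$ assigned to $A$. This is a second appeal to \eqref{eq:left-closure-property}. It uses that claim to partition the sets assigned to $A$ into old cells $\cC(A,\bs r,E)$. It then compares each old cell with its expanded cell $\cC^-(A,\bs r,E)$ via the exact ratio $(n_i-m+1)/(r_i-a+1)$, where $a=|A_i|$. Finally, it must verify separately that distinct expanded cells are disjoint, using uniqueness of the assignment.

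\textbf{Your route.} You instead double count edges of the swap graph between the $\sum_{A\in\cA}w(A)$ retained sets and the sets $(F\setminus\{x_{i,m}\})\cup\{y\}$ with $y\notin Q_i(m)$. Left degrees are at least $n_i-2b_i+1$ by Lemma~\ref{lem:critical}. Right degrees are at most $|G_i\setminus Q_i(m)|\le|G_i|\le b_i$, so it does not matter from which $A$ a given $G$ arises. This bypasses \eqref{eq:exact-prefix} entirely and replaces the disjointness-of-cells argument by the fibre bound. Disjointness from $U_R(\cB^0)$ is handled just as in the paper and relies on the same restriction to points outside $Q_i(m)$.

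\textbf{What each buys.} The paper's route gives an exact per-cell gain, which is sharper before it is relaxed to the common constant. Your route reaches exactly the stated constant with less structural input. The two are closely related: once \eqref{eq:exact-prefix} is known, your swap graph restricted to one cell is biregular with degrees $n_i-m-r_i+a$ and $r_i-a+1$. Counting edges there recovers precisely the paper's ratio, so the two arguments are the global and local forms of the same comparison.
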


\begin{proof}
	Lemma~\ref{lem:generators-intersect} first shows that $\cB$ is
	$t$-intersecting.  
	Each $A^-\in\cB'$ has size at least $t$.  
	If $B\in\cB^0$, then $x_{i,m}\notin B$, implying $|A^-\cap B|=|A\cap B|\ge t$.  
	If $A,A'\in\cA$ are distinct, then $x_{i,m}\in A\cap A'$ and $|A^-\cap (A')^-|=|A\cap A'|-1\ge t$ because the hypothesis ensures $|A\cap A'|\ge t+1$.  
	Then $\cB'$ is $t$-intersecting.

Fix $A\in\cB^1$ with $w(A)>0$. 
Pick a set $F\in\cF$ assigned to $A$. 
We first claim that
\begin{equation}\label{eq:exact-prefix}
	F\cap Q_i(m)=A_i.
\end{equation}
The inclusion $A_i\subseteq F\cap Q_i(m)$ is immediate. 
If the reverse inclusion failed,  then choose
$x_{i,j}\in(F\cap Q_i(m-1))\setminus A_i$ and let
\[
\widetilde A=(A\setminus\{x_{i,m}\})\cup\{x_{i,j}\}.
\]
By $\widetilde A\preceq A\in\cB$ and \eqref{eq:left-closure-property}, we have $C\subseteq\widetilde A$  for some $C\in\cB$. 
It follows from $x_{i,m}\notin\widetilde A$ that $C\in\cB^0$ and $C\subseteq\widetilde A\subseteq F$. 
Hence $F$ is assigned to a member of $\cB^0$, a contradiction. 
This proves \eqref{eq:exact-prefix}.

We may suppose that $F\in\cH(\bs r)$ where $\bs r=(r_1,\dots,r_p)$, and write $E=F\setminus X_i$. 
The \emph{old cell} determined by $(A,\bs r,E)$ is
\[
\cC(A,\bs r,E)=
\{G\in\cH(\bs r):G\setminus X_i=E,\ G\cap Q_i(m)=A_i\}.
\]
Every member of this cell contains $A$, and is also assigned to $A$. 
Indeed, all generators use only points of $Q_i(m)$ in $X_i$, so any earlier generator contained in one member of the cell would be contained in all of them, including $F$. 
We also know that the distinct old cells associated with $A$ form a partition of the subfamily of $\cF$ consisting of the sets assigned to $A$, and hence their sizes sum to $w(A)$.

Define the \emph{expanded cell} of  $\cC(A,\bs r,E)$  by
\[
\cC^-(A,\bs r,E)=
\{G\in\cH(\bs r):G\setminus X_i=E,\ 
G\cap Q_i(m-1)=A_i\setminus\{x_{i,m}\}\}.
\]
Suppose in addition that $A\in\cA$. 
Then $A^-\in\cB'$, and since every member of the expanded cell contains $A^-$, we have $\cC^-(A,\bs r,E)\subseteq U_R(\cB')$. 

We next show $\cC^-(A,\bs r,E)\cap U_R(\cB^0)=\varnothing$. 
Suppose for contradiction that $D\subseteq G\in\cC^-(A,\bs r,E)$ for some $D\in\cB^0$. 
Then $D_i\subseteq G\cap Q_i(m-1)=A_i\setminus\{x_{i,m}\}$. 
This together with $D\setminus X_i\subseteq G\setminus X_i=F\setminus X_i$ yields $D\subseteq F$, a contradiction to the assumption that $F$ is assigned to $A$.

We also claim that $\cC^-(A,\bs r,E)\cap\cC^-(A',\bs r',E')=\varnothing$ whenever $\cC(A,\bs r,E)$ and $\cC(A',\bs r',E')$ are nonempty old cells with $(A,\bs r,E)\neq(A',\bs r',E')$. 
Suppose for contradiction that some $G'$ belongs to this intersection. 
Then $\bs r=\bs r'$ and $E=G'\setminus X_i=E'$. We also have $A_i\setminus\{x_{i,m}\}=G'\cap Q_i(m-1)=A_i'\setminus\{x_{i,m}\}$. 
Since $A,A'\in\cB^1$, both contain $x_{i,m}$, and hence $A_i=A_i'$. 
It follows that the two old cells are the same collection, i.e., $\cC(A,\bs r,E)=\cC(A',\bs r,E)$. 
Then every member of this family is assigned both to $A$ and to $A'$, so the uniqueness of the assignment gives $A=A'$. 
Hence $(A,\bs r,E)=(A',\bs r',E')$, a contradiction.

Now let $\cJ$ be the collection of all $(A,\bs r,E)$ such that $A\in\cA$, $w(A)>0$ and $\cC(A,\bs r,E)$ is an old cell associated with $A$. We have 
\[
|U_R(\cB')|\ge|U_R(\cB^0)|+\sum_{(A,\bs r, E)\in\cJ}|\cC^-(A,\bs r,E)|.
\]
Since the sets assigned to members of $\cB^0$ form exactly $U_R(\cB^0)$, we have $|U_R(\cB^0)|=|\cF|-W$. 
To get the desired result, it suffices to show
\[
\sum_{(A,\bs r, E)\in\cJ}|\cC^-(A,\bs r,E)|\ge\left(\frac{n_i}{b_i}-1+\frac1{b_i}\right)\sum_{A\in\cA}w(A).
\]

For any $(A,\bs r,E)\in\cJ$, put $a=|A_i|$.
Since $(\cB\setminus\{A\})\cup\{A^-\}$ is not $t$-intersecting, there exists $B\in\cB\setminus\{A\}$ such that  $|A^-\cap B|\le t-1$. 
This together with $|A\cap B|\ge t$ and $A^-=A\setminus\{x_{i,m}\}$ yields $x_{i,m}\in A\cap B$ and $|A\cap B|=t$. 
By Lemma \ref{lem:critical},  we have $m\le a+b_i-1$. This together with $n_i>2b_i\ge2r_i$ and $b_i\ge b_i-a+1\ge1$ yields
\[
 \frac{n_i-m+1}{r_i-a+1}\ge\frac{n_i-a-b_i+2}{b_i-a+1}=1+\frac{n_i-2b_i+1}{b_i-a+1}\ge1+\frac{n_i-2b_i+1}{b_i}=\frac{n_i}{b_i}-1+\frac1{b_i}
\]
and
\[
|\cC^-(A,\bs r,E)|=\binom{n_i-m+1}{r_i-a+1}=\frac{n_i-m+1}{r_i-a+1}\binom{n_i-m}{r_i-a}\ge\left(\frac{n_i}{b_i}-1+\frac1{b_i}\right)|\cC(A,\bs r,E)|.
\]
Recall that the sizes of the distinct old cells associated with $A$ sum to $w(A)$. 
Then
\begin{align*}
\sum_{(A,\bs r, E)\in\cJ}|\cC^-(A,\bs r,E)|&\ge\left(\frac{n_i}{b_i}-1+\frac1{b_i}\right)\sum_{(A,\bs r, E)\in\cJ}|\cC(A,\bs r,E)|\\
&=\left(\frac{n_i}{b_i}-1+\frac1{b_i}\right)\sum_{A\in\cA,\ w(A)>0}w(A)\\
&=\left(\frac{n_i}{b_i}-1+\frac1{b_i}\right)\sum_{A\in\cA}w(A),
\end{align*}
as desired.
\end{proof}

To apply Lemma~\ref{lem:ledger}, we need a subfamily $\cA\subseteq\cB^1$ in which no two distinct members have intersection exactly $t$. 
The next lemma  shows that such a subfamily can be chosen with total weight at least $W/(t+1)$.

\begin{lemma}\label{lem:critical-graph}
	Let $\cB=L^*(\cB)$ be a $t$-intersecting generating antichain of a
	shifted family.  Suppose that $i\in[p]$ satisfies $m:=m_i(\cB)>t$. 
	Set $\cV=\{A\in\cB:x_{i,m}\in A\}$. 
	For any weight function $w:\cV\to\mathbb R_{\ge0}$, put
	$W=\sum_{A\in\cV}w(A)$.  Form a graph  on $\cV$ by joining two distinct
	generators $A,B$ exactly when $|A\cap B|=t$.  Then  this graph has an independent set $\cA$ satisfying
	\[
			\sum_{A\in\cA}w(A)\ge\frac{W}{t+1}.
	\]
\end{lemma}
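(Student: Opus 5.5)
The plan is to show that the graph on $\cV$ is properly $(t+1)$-colourable, with the colouring chosen explicitly. Once $\cV$ is split into $t+1$ independent sets $\cV_1,\dots,\cV_{t+1}$, pigeonhole finishes the proof. Indeed, $\sum_{c}\sum_{A\in\cV_c}w(A)=W$, so some class has weight at least $W/(t+1)$, and we take $\cA$ to be that class.

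\textbf{The colouring.} Since $m>t$, the points $x_{i,1},\dots,x_{i,t}$ all lie in $Q_i(m-1)$, and every member of $\cV$ contains $x_{i,m}$. For $A\in\cV$ we colour as follows.
\begin{itemize}
\item If $A_i\supseteq Q_i(t)$, put $A$ in class $t+1$.
\item Otherwise, put $A$ in class $j$, where $j\le t$ is the least index with $x_{i,j}\notin A$.
\end{itemize}
This is well defined and uses at most $t+1$ colours.

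\textbf{Class $t+1$ is independent.} Take distinct $A,B$ in class $t+1$. Both contain $Q_i(t)\cup\{x_{i,m}\}$, which has $t+1$ elements because $m>t$. Hence $|A\cap B|\ge t+1$, so $A$ and $B$ are not adjacent.

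\textbf{Class $j\le t$ is independent.} Take distinct $A,B$ in class $j$ and suppose they were adjacent, i.e.\ $|A\cap B|=t$. Both contain $x_{i,m}$, and $\cB$ is a $t$-intersecting generating antichain with $\cB=L^*(\cB)$ of a shifted family. So Lemma~\ref{lem:critical} applies and gives $A_i\cup B_i=Q_i(m)$. But $x_{i,j}\in Q_i(t)\subseteq Q_i(m)$ lies in neither $A$ nor $B$, which is a contradiction.

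The only step needing care is finding a colouring for which Lemma~\ref{lem:critical} does all the work. A colouring based on $Q_i(t+1)$ fails when $m=t+1$. Using $Q_i(t)$ together with the common point $x_{i,m}$ avoids this: it handles the "full" class by a direct count, and every other class by the covering conclusion $Q_i(m)=A_i\cup B_i$.
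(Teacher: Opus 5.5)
Your proof is correct and is essentially the paper's argument. Your colouring is exactly the paper's $\kappa$: since $m>t$ gives $Q_i(t)\subseteq Q_i(m-1)$, the paper's condition $x_{i,j}\in Q_i(m-1)\setminus A_i$ for $j\le t$ is the same as your $x_{i,j}\notin A$. The classes $j\le t$ are handled identically through $A_i\cup B_i=Q_i(m)$ from Lemma~\ref{lem:critical}. The only difference is class $t+1$, where your direct count $Q_i(t)\cup\{x_{i,m}\}\subseteq A\cap B$ is slightly simpler than the paper's cardinality comparison of the missing sets.
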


\begin{proof}
	For $A\in\cV$, let
	\[
	M(A)=Q_i(m-1)\setminus A_i.
	\]
	Define a color
	$\kappa(A)\in[t+1]$ by
	\[
	\kappa(A)=
	\begin{cases}
		\min\{j\in[t]:x_{i,j}\in M(A)\},
		&\text{if }M(A)\cap Q_i(t)\ne\varnothing,\\
		t+1,&\text{if }M(A)\cap Q_i(t)=\varnothing.
	\end{cases}
	\]
	We first show that it is a proper coloring.

	Let $A,B\in\cV$ be adjacent. 
	By Lemma \ref{lem:critical}, we have $A_i\cup B_i=Q_i(m)$, and  $M(A)\cap M(B)=\varnothing$ follows. 
By $x_{i,m}\in A_i\cap B_i$, we get $|M(A)|=m-|A_i|$ and $|M(B)|=m-|B_i|$.
These together with $A_i\cup B_i=Q_i(m)$ and $M(A)\cap M(B)=\varnothing$ yield
\begin{equation}\label{eq:missing-pair}
	|M(A)\cup M(B)|=|M(A)|+|M(B)|=m-|A_i\cap B_i|\ge m-t.
\end{equation}

Since $M(A)\cap M(B)=\varnothing$, by the definition of $\kappa$, the two vertices cannot receive the same color $\ell\le t$. 
It is also  impossible that $\kappa(A)=\kappa(B)=t+1$, since otherwise  both $M(A)$ and $M(B)$ are contained in $Q_i(m-1)\setminus Q_i(t)$, implying that $|M(A)\cup M(B)|\le m-t-1$, a contradiction to \eqref{eq:missing-pair}.  
Thus $\kappa(A)\ne\kappa(B)$.

Each color class is an independent set.  
The class of largest total weight has weight at least $W/(t+1)$, proving the desired result.
\end{proof}

We now combine the preceding lemmas to obtain the following localization result for generating antichains.

\begin{lemma}\label{lem:support}
	Suppose that $n_i\ge(t+2)b_i$ for every $i\in[p]$.
	Every shifted maximum $t$-intersecting subfamily of $\cH_R$ has a $t$-intersecting generating antichain $\cB$ such that
	\begin{equation}\label{eq:support}
		\cB\subseteq2^Y,\qquad
		Y=\bigcup_{i=1}^pQ_i(t).
	\end{equation}
\end{lemma}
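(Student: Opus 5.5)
Let $\cF$ be a shifted maximum $t$-intersecting subfamily of $\cH_R$. Since $\cF$ is maximum, it is an upset in $\cH_R$ and has a generating antichain. By Lemma~\ref{lem:reduced-left-closure} we may replace it by a reduced left-closed generating antichain $\cB=L^*(\cB)$. Because $n_j\ge(t+2)b_j\ge2b_j$, Lemma~\ref{lem:generators-intersect} shows that $\cB$ is $t$-intersecting. Among all such generating antichains of $\cF$, I will choose one minimizing $\sum_i m_i(\cB)$, and suppose for contradiction that $m=m_i(\cB)>t$ for some $i$. The aim is to build a $t$-intersecting family strictly larger than $\cF$, which contradicts maximality.

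Set $\cB^1=\{A\in\cB:x_{i,m}\in A\}$, which is nonempty, and $\cB^0=\cB\setminus\cB^1$. There are two cases.

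\emph{First case.} Some $A\in\cB^1$ can be replaced by $A^-=A\setminus\{x_{i,m}\}$ with $(\cB\setminus\{A\})\cup\{A^-\}$ still $t$-intersecting. Then $U_R$ of the new family contains $\cF$ and is $t$-intersecting. It is admissible, because $A^-\subseteq A$. By maximality it therefore equals $\cF$. Now $\cF\supseteq\cH_R[A^-]$, so $A^-$ is contained in a generator, and since $\cB$ is an antichain this forces a contradiction with $A\in\cB$ being minimal. Here I will use that $\cF$ is an upset and that $\cB$ consists of the minimal members of $L(\cB)$. One route is to observe that the generating antichain of $\cF$, which consists of the minimal admissible sets all of whose extensions lie in $\cF$, must contain some subset of $A^-$, contradicting minimality of $A$. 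So in $\cB$ itself no such $A$ exists, and this case cannot occur.

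The same minimality argument handles $|A|\le t$. If $|A|=t$ for some $A\in\cB^1$, then every generator contains $A$. Otherwise some $B$ would meet $A$ in fewer than $t$ points. So $\cF$ is contained in a full $t$-star, whose generating set is a single $t$-set $T$. Left-closure then forces $T\subseteq Y$, which contradicts $m>t$, because in block $i$ the set $A_i$ of size at most $t$, left-closed, lies in $Q_i(t)$. Hence every $A\in\cB^1$ has $|A|\ge t+1$, and every $A^-$ breaks $t$-intersection. These are exactly the hypotheses of Lemma~\ref{lem:ledger}.

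\emph{Main case.} Apply Lemma~\ref{lem:critical-graph} with the assignment weights $w$. It gives an independent set $\cA\subseteq\cB^1$ with $\sum_{A\in\cA}w(A)\ge W/(t+1)$. Lemma~\ref{lem:ledger}, using $n_i\ge(t+2)b_i>2b_i$, then yields a $t$-intersecting family $U_R(\cB')$ with
\[
|U_R(\cB')|\ge|\cF|-W+\Bigl(\frac{n_i}{b_i}-1+\frac1{b_i}\Bigr)\frac{W}{t+1}\ge|\cF|-W+\Bigl(t+1+\frac1{b_i}\Bigr)\frac{W}{t+1}>|\cF|,
\]
provided $W>0$. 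Here $W>0$ holds because $\cB$ is an antichain, so each generator $A$ is the first generator contained in some set extending $A$ that avoids any other generator. I would justify this by taking $F\supseteq A$ whose intersection with $Q_i(m)$ is exactly $A_i$, using \eqref{eq:exact-prefix}-type reasoning together with left-closure. The strict inequality contradicts maximality, so $m_i(\cB)\le t$ for every $i$, which gives \eqref{eq:support}.

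I expect the delicate step to be proving $W>0$ and, in the degenerate cases, dismissing $|A|=t$ and the reducible generators cleanly while keeping $\cB$ reduced left-closed. The inequality itself is routine once the earlier lemmas apply.
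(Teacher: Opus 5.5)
\textbf{Gap: the first case (the deletion property).} The gap is in your ``first case'', which is the deletion property that Lemma~\ref{lem:ledger} needs. From $U_R((\cB\setminus\{A\})\cup\{A^-\})=\cF$ you correctly get $\cH_R[A^-]\subseteq\cF$. This does not force $A^-$ to contain a member of $\cB$.

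Generating antichains are not unique. Your ``one route'' silently identifies $\cB$ with the canonical family $\mathcal{M}$ of inclusion-minimal admissible sets $C$ with $\cH_R[C]\subseteq\cF$, but you chose $\cB$ by minimizing $\sum_i m_i(\cB)$, not as $\mathcal{M}$. Antichain plus reduced left-closure plus upset is not enough, as the following example shows.
\begin{itemize}
\item Take $p=1$, $t=1$, $R=\{(k)\}$, $k\ge2$, $n_1\ge3k$. The star $\cF=\cH_R[\{x_{1,1}\}]$ is shifted and maximum.
\item The family $\cB=\{\{x_{1,1},x_{1,j}\}:2\le j\le n_1\}$ generates $\cF$ and satisfies $L(\cB)=\cB=L^*(\cB)$.
\item For $A=\{x_{1,1},x_{1,n_1}\}$, the family $(\cB\setminus\{A\})\cup\{A^-\}$ is intersecting and $\cH_R[A^-]=\cF$. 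Yet $A^-=\{x_{1,1}\}$ contains no generator.
\end{itemize}

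Your first-case argument never invokes extent minimality, and that minimality alone does not close the case. The family $L^*(\widetilde{\cB})$ is again a reduced left-closed generating antichain with $m_j(L^*(\widetilde{\cB}))\le m_j(\cB)$, but nothing excludes equality in every coordinate. This is exactly why the paper adds a second selection criterion: lexicographic minimality of the size profile of $\cB$, read from the largest size down. It then proves that every member of $L^*(\widetilde{\cB})$ of size at least $|A|$ already lies in $\cB$, while $A$ itself is lost, so the profile strictly drops.

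\textbf{An alternative repair, and the $W>0$ step.} A repair closer to your intention is to take $\cB=\mathcal{M}$ from the start.
\begin{itemize}
\item $\mathcal{M}$ generates $\cF$ because $\cF$ is an upset.
\item The first half of Lemma~\ref{lem:reduced-left-closure} gives $\cH_R[A']\subseteq\cF$ for every $A'\in L(\mathcal{M})$. Hence every such $A'$ contains a member of $\mathcal{M}$, and so $L^*(\mathcal{M})=\mathcal{M}$.
\item The deletion property is then immediate.
\end{itemize}

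The cost is that you lose extent minimality, so $W>0$ needs a separate argument. Your current justification of $W>0$ is incomplete anyway. The condition $F\cap Q_i(m)=A_i$ does not stop $F$ from containing a member of $\cB^0$ that uses points of other blocks outside $A$.
\begin{itemize}
\item With $\cB=\mathcal{M}$: suppose $W=0$, so $U_R(\cB^0)=\cF$. For $A\in\cB^1$, the family $\cB^0\cup\{A^-\}$ is $t$-intersecting, because $|A|\ge t+1$ and $x_{i,m}\notin B$ for $B\in\cB^0$. Maximality then gives $\cH_R[A^-]\subseteq\cF$, contradicting the minimality of $A$ in $\mathcal{M}$.
\item In an extent-minimal setup: $W=0$ means $\cB^0$ generates $\cF$, and $L^*(\cB^0)$ has smaller $i$th extent. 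This is how the paper argues.
\end{itemize}

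Your handling of the $t$-set case and the final count via Lemmas~\ref{lem:critical-graph} and~\ref{lem:ledger} are fine and match the paper.
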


\begin{proof}
	Let $\cF$ be a shifted maximum $t$-intersecting subfamily of $\cH_R$ with a generating family $\cD$.  
	By Lemma \ref{lem:reduced-left-closure}, $\cF$ has a generating antichain $L^*(\cD)$. 
	Since $L^*(\cD)\subseteq L(L^*(\cD))\subseteq L(\cD)$, each member of $L(L^*(\cD))$ contains a member of $L^*(\cD)$, and it follows from the fact that $L^*(\cD)$ is an antichain that $L^*(L^*(\cD))=L^*(\cD)$, i.e., $L^*(\cD)$ is reduced left-closed.

Choose a reduced left-closed generating antichain $\cB$ of $\cF$ minimizing $\sum_i m_i(\cB)$. 
Among all such choices, choose one such that the sequence $(|\cB\cap\binom{X}{|X|}|,|\cB\cap\binom{X}{|X|-1}|,\dots,|\cB\cap\binom{X}{1}|,|\cB\cap\binom{X}{0}|)$ is lexicographically minimal.

For every $i\in[p]$, $n_i\ge(t+2)b_i$ gives $n_i\ge2b_i$. 
Lemma~\ref{lem:generators-intersect} shows that $\cB$ is $t$-intersecting.  
We establish the following deletion claim.

\begin{claim}  
	For $i\in[p]$, if $m_i(\cB)>0$ and  $A\in\cB$ contains $x_{i,m_i(\cB)}$, then with $A^-=A\setminus\{x_{i,m_i(\cB)}\}$, the family $(\cB\setminus\{A\})\cup\{A^-\}$ is not $t$-intersecting.
\end{claim}
\begin{proof}
	Suppose for contradiction that $\widetilde{\cB}:=(\cB\setminus\{A\})\cup\{A^-\}$ is $t$-intersecting.  
	Then $U_R(\widetilde{\cB})$ is $t$-intersecting and contains $U_R(\cB)=\cF$, because every set containing $A$ also contains $A^-$.  
	Since $|\cF|$ is maximum, we have $U_R(\widetilde{\cB})=\cF$.

Let $\cB'=L^*(\widetilde{\cB})$. 
This is a reduced left-closed generating antichain of $\cF$.  
Left moves only decrease labels, so $m_j(\cB')\le m_j(\cB)$ for every $j\in[p]$. 
If at least one inequality is strict, then $\sum_j m_j(\cB')<\sum_j m_j(\cB)$, a contradiction to the first minimality condition imposed on $\cB$.  
We may therefore assume that every extent is unchanged.

We next compare the numbers of generators of each size in $\cB'$ with the corresponding numbers in $\cB$. 
Put $s=|A|$.  
The part of $L(\widetilde{\cB})$ coming from $A^-$ consists only of sets of size $s-1$.

Let $D\in L(\cB\setminus\{A\})$ and suppose that $D$ is not inclusion-minimal in $L(\cB)$.  
Choose  $E\in L(\cB)$ with $E\subsetneq D$.  
If $E\in L(\cB\setminus\{A\})$, then  $D$ is  not inclusion-minimal  in $L(\widetilde{\cB})$.  
Now assume $E\not\in L(\cB\setminus\{A\})$. 
We have $E\preceq A$.  
Let $e=x_{i,r}$ be the element of $E_i$ for which $r$ is largest.
 Then $E^-:=E\setminus\{e\}\preceq A^-$ and $E^-\subsetneq D$. 
Hence $E^-\in L(\{A^-\})\subseteq L(\widetilde{\cB})$, and $D$ is not inclusion-minimal in $L(\widetilde{\cB})$. 

Recall that $\cB=L^*(\cB)$. We further conclude that every member of $\cB'$ with size at least $s$ belongs to $\cB$.
Observe that $A\not\in\cB'$ by $A^-\subsetneq A$ and $A^-\in L(\widetilde{\cB})$. 
Hence the numbers of generators of sizes larger than $s$ do not increase, while the number of generators with size $s$ decreases. 
The sequence $(|\cB'\cap\binom{X}{|X|}|,|\cB'\cap\binom{X}{|X|-1}|,\dots,|\cB'\cap\binom{X}{1}|,|\cB'\cap\binom{X}{0}|)$ is therefore lexicographically smaller than the corresponding sequence for $\cB$, a contradiction to the second minimality condition imposed on $\cB$.
\end{proof}

If $\cB$ contains a $t$-set $T$, then the $t$-intersection and antichain
properties force $\cB=\{T\}$.  
Reduced left-closure forces $T\cap X_i=Q_i(|T\cap X_i|)$ for every $i$, and hence $m_i(\cB)=|T\cap X_i|\le\min\{t,b_i\}$. 
Then \eqref{eq:support} holds in this case.  
We may henceforth assume that every generator has size at least $t+1$.

Fix $i\in[p]$ with $m:=m_i(\cB)>0$. 
It remains to show that $m\le t$.  
Suppose for contradiction that $m>t$.  
Write $\cB=\cB^0\mathbin{\dot\cup}\cB^1$ and use the assignment and weights from the setup preceding Lemma~\ref{lem:ledger}.  
We have $W>0$ because, if $W=0$, then $U_R(\cB^0)=\cF$, and $L^*(\cB^0)$ would have strictly smaller $i$th extent and no larger other extent.

By Lemma~\ref{lem:critical-graph}, there exists $\cA\subseteq\cB^1$ such that no two distinct members of $\cA$ have intersection exactly $t$ and 
\[
\sum_{A\in\cA}w(A)\ge\frac{W}{t+1}.
\]
Since $n_i\ge(t+2)b_i$, we have
\[
\frac{n_i}{b_i}-1+\frac1{b_i}
\ge t+1+\frac1{b_i}>t+1.
\]
Then Lemma~\ref{lem:ledger}, together with the preceding Claim, produces  a $t$-intersecting family $\cB''=\cB^0\cup\{A^-:A\in\cA\}$ with
\[
|U_R(\cB'')|
\ge|\cF|-W+\left(\frac{n_i}{b_i}-1+\frac1{b_i}\right)\frac{W}{t+1}
>|\cF|.
\]
This contradicts the assumption that $|\cF|$ is maximal. 
Hence $m_i(\cB)\le t$ for every $i$, and  \eqref{eq:support} follows.
\end{proof}

\section{Proof of Theorem \ref{thm:multi-layer}}\label{sec:proofs}

In this section, we use the localization result of Lemma~\ref{lem:support} to prove Theorem~\ref{thm:multi-layer}. 
The following two lemmas are also needed.

Put
\[
q=\max_{i\in[p]}\frac{b_i}{n_i},\qquad
M_R=\max_{\substack{s_1+\cdots+s_p=t\\(s_1,\dots,s_p)\in\mathbb{Z}_{\ge0}^p}}\sum_{\bs r\in R}
\prod_{i=1}^p\binom{n_i-s_i}{r_i-s_i}.
\]
We remark that $M_R$ is the maximum size of a full $t$-star.

\begin{lemma}\label{lem:cylinder-vs-star}
	Let $B\subseteq X$ be an $R$-admissible set with $|B|\ge t$.  
	Then $|\cH_R[B]|\le q^{|B|-t}M_R$.
\end{lemma}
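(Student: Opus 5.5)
The plan is a layer-by-layer comparison with a single well-chosen $t$-star. Write $\beta_i=|B_i|=|B\cap X_i|$. Since $\sum_i\beta_i=|B|\ge t$, I will fix integers $0\le s_i\le\beta_i$ with $s_1+\cdots+s_p=t$, for instance by taking a $t$-subset $S\subseteq B$ and setting $s_i=|S\cap X_i|$. The vector $(s_1,\dots,s_p)$ is one of those allowed in the definition of $M_R$, so
\[
\sum_{\bs r\in R}\prod_{i=1}^p\binom{n_i-s_i}{r_i-s_i}\le M_R .
\]
It therefore suffices to show, for each fixed $\bs r\in R$,
\[
|\cH(\bs r)[B]|=\prod_{i=1}^p\binom{n_i-\beta_i}{r_i-\beta_i}\le q^{|B|-t}\prod_{i=1}^p\binom{n_i-s_i}{r_i-s_i},
\]
and then to sum over $\bs r\in R$, using that $\cH_R[B]$ is the disjoint union of the sets $\cH(\bs r)[B]$.

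The per-layer inequality will be checked coordinatewise. If $r_i<\beta_i$ for some $i$, the left side vanishes (the paper's convention $\binom m\ell=0$ for $\ell<0$), so there is nothing to prove. Otherwise $s_i\le\beta_i\le r_i\le n_i$ for every $i$, and the standard telescoping identity gives
\[
\binom{n_i-\beta_i}{r_i-\beta_i}=\binom{n_i-s_i}{r_i-s_i}\prod_{j=s_i}^{\beta_i-1}\frac{r_i-j}{n_i-j}.
\]
Each factor satisfies $\frac{r_i-j}{n_i-j}\le\frac{r_i}{n_i}$, since $r_i\le n_i$ and $0\le j<r_i$. It also satisfies $\frac{r_i}{n_i}\le\frac{b_i}{n_i}\le q$, by the definitions of $b_i$ and $q$. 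Hence the product is at most $q^{\beta_i-s_i}$. Multiplying over $i$ gives the factor $q^{\sum_i(\beta_i-s_i)}=q^{|B|-t}$, which is the per-layer inequality.

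There is no real obstacle here. The only points needing care are the choice of the vector $(s_i)$, which must have $s_i\le\beta_i$ and coordinate sum exactly $t$ so that it is admissible in the maximum defining $M_R$, and the vanishing-binomial cases. For the latter, note that if $r_i<s_i$ then also $r_i<\beta_i$, so such layers contribute zero to both sides. Admissibility of $B$ is not actually needed for the inequality; it only guarantees that the left side is nonzero.
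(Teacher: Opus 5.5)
Your proposal is correct and matches the paper's proof. Both arguments fix a $t$-subset $T\subseteq B$ and telescope $|\cH(\bs r)[B]|$ against $|\cH(\bs r)[T]|$, using the factor bound $(r_i-j)/(n_i-j)\le r_i/n_i\le q$. Summing over layers then bounds the full $t$-star of $T$ by $M_R$. Your explicit treatment of layers with $r_i<|B_i|$, where the binomial vanishes, is only a cosmetic difference: the paper avoids these layers by summing only over $\bs r$ with $\cH(\bs r)[B]\neq\varnothing$.
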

\begin{proof}
	Fix $T\in\binom Bt$. 
	Pick $\bs r=(r_1,\dots,r_p)\in R$ with $\cH(\bs r)[B]\neq\varnothing$. 
	Then $B$ and $T$ are both admissible in this layer. 
	Observe that
	\[
	|\cH(\bs r)[B]|=|\cH(\bs r)[T]|\prod_{i=1}^p\prod_{j=|T_i|}^{|B_i|-1}\frac{r_i-j}{n_i-j}\le|\cH(\bs r)[T]|\prod_{i=1}^p\left(\frac{r_i}{n_i}\right)^{|B_i|-|T_i|}\le q^{|B|-t}|\cH(\bs r)[T]|.
	\]
	Summing this inequality over $\bs r\in R$ with $\cH(\bs r)[B]\neq\varnothing$ yields the desired result.	
\end{proof}

The following direct orbit argument strengthens \cite[Lemma~2.4]{YLW}
by weakening its hypothesis.

\begin{lemma}\label{lem:unshift}
	Let $\varnothing\ne\cF\subseteq\cH_R$ be $t$-intersecting. Suppose
	$n_i>2b_i$ for each $i\in[p]$. 
	If one shift inside a part transforms $\cF$ into a full $t$-star, then
	$\cF$ itself is a full $t$-star.
\end{lemma}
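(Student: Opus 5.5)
We argue directly from the structure of the shift. Let $x=x_{i,a}$, $y=x_{i,b}$ with $a<b$, and suppose $S_{xy}(\cF)=\cH_R[T]$ for some $t$-set $T$. Call members of $\cF$ \emph{moved} if they are replaced by the shift, and \emph{fixed} otherwise. Every fixed member lies in $\cH_R[T]$. Each moved member $F$ satisfies $y\in F$, $x\notin F$, and its image $(F\setminus\{y\})\cup\{x\}$ lies in $\cH_R[T]$. If $x\notin T$, then every image contains $T$, and $y\notin T$ as well, because the images do not contain $y$. Hence $F\supseteq T$ too, so $\cF\subseteq\cH_R[T]$. Since $|\cF|=|\cH_R[T]|$, this gives $\cF=\cH_R[T]$. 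Likewise, if $y\in T$, then every image contains $y$; but images never contain $y$, so there are no moved members and $\cF=S_{xy}(\cF)$. We may therefore assume $x\in T$ and $y\notin T$. Put $T'=(T\setminus\{x\})\cup\{y\}$. Every moved member contains $T'$ and avoids $x$.

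The aim is to show that either no member moves, or every member of $\cH_R[T]$ containing $x$ and not $y$ moves. In the second case we should get $\cF=\cH_R[T']$. To see this, consider a member $G\in\cH_R[T]$ with $x,y\in G$. It is fixed in $S_{xy}(\cF)$. Either $G\in\cF$ was fixed, or $G$ is itself the image of a moved set; the latter is impossible since images avoid $y$. So $\cF$ contains $\cH_R[T\cup\{y\}]$. Next, $\cF$ contains the members $G\supseteq T$ with $y\notin G$ that were fixed, together with the moved preimages $(G\setminus\{x\})\cup\{y\}\supseteq T'$. So $\cF$ is $\cH_R[T\cup\{y\}]$ plus, for each member $G\in\cH_R[T]$ avoiding $y$, exactly one of $G$ and its swap $G^{xy}=(G\setminus\{x\})\cup\{y\}$. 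Let $\cP$ be the set of those $G$ for which the swap is chosen, and $\cN$ the rest.

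The key step is to show that $\cP$ or $\cN$ is empty, using $t$-intersection and $n_j>2b_j$. Suppose $G\in\cN$ and $H\in\cP$. Then $G\supseteq T$ with $y\notin G$, and $H^{xy}\supseteq T'$ with $x\notin H^{xy}$. Their intersection equals $(G\cap H)\setminus\{x\}$, with $y$ not counted. We want to choose such $G,H$ with $G\cap H=T$ exactly, which would give intersection size $t-1$ and a contradiction. The difficulty is that $\cP$ and $\cN$ are arbitrary, so we cannot choose $G$ and $H$ freely. Instead, define a graph on the members of $\cH_R[T]$ avoiding $y$, joining $G$ and $H$ when $G\cap H=T$. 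Because $T\cup\{x'\}$ has size $t$ and the sizes $|G\cap H|$ must stay at least $t$ regardless, any edge between $\cN$ and $\cP$ gives a contradiction. So it suffices to show that this graph is connected. Two members $G,H$ with $|(G\cap H)\setminus T|$ arbitrary should be connectable through a member $K$ that is disjoint from both outside $T\cup\{y\}$, within the layer constraints. Such a $K$ exists in block $j$ when $n_j-1\ge |G_j\cup H_j|+r_j-|T_j|$, which is guaranteed by $n_j>2b_j$ after a careful count. When the layers of $G$ and $H$ differ, choose $K$ in either layer. I expect this connectivity count to be the main obstacle. It may require a two-step path $G\!-\!K\!-\!K'\!-\!H$, using an intermediate $K$ disjoint from $G$ alone, to fit within the $2b_j$ room. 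If the connectivity argument stalls, I would instead consider pairs $G\in\cN$, $H\in\cP$ that minimize $|G\cap H|$, and show by exchanging elements (possible since $n_j>2b_j$) that some pair achieves $G\cap H=T$.

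Once $\cP=\varnothing$, we get $\cF=\cH_R[T]$. Once $\cN=\varnothing$, we get $\cF=\cH_R[T']$, which is again a full $t$-star. This completes the plan.
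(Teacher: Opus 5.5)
Your reduction is sound and parallels the paper's. The cases $x\notin T$ and $y\in T$ are handled correctly; your cardinality argument for $x\notin T$ is a clean substitute for the orbit table. For $x\in T$, $y\notin T$, your description of $\cF$ as $\cH_R[T\cup\{y\}]$ plus exactly one of $G$, $G^{xy}$ for each $G\in\cH_R[T]$ avoiding $y$ is the paper's orientation $\varepsilon$. Your classes $\mathcal N,\cP$ are its two values, and your graph is the paper's $\Gamma$.

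The gap is the step that carries all the content: you never prove that this graph is connected, or that some $G\in\mathcal N$, $H\in\cP$ satisfy $G\cap H=T$. The common-neighbour count you propose is not implied by $n_j>2b_j$. A common neighbour $K$ of non-adjacent $G,H$ needs about $|G_j\cup H_j|+r_j-|T_j|$ points in $X_j$ (one more in the block $X_i$ containing $x,y$). Since $|G_j\cup H_j|$ can be close to $2b_j-|T_j|$, this requires roughly $n_j\ge 3b_j$. Worse, no bounded-length path construction such as $G-K-K'-H$ can work. For $p=1$, $t=1$, $R=\{(b)\}$, $n_1=2b+1$, your graph is the odd graph $KG(2b-1,b-1)$, whose diameter is $b-1$.

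The paper resolves this differently. It identifies the graph on a single layer with a direct product of Kneser graphs $KG(N_j,d_j)$ with $N_j\ge 2d_j+1$; in the shift block this uses $|T\cap X_i|\ge1$, which holds because $x\in T$. Each factor is connected and nonbipartite, so Weichsel's theorem gives connectivity of the layer. Distinct layers are then joined by one edge, because $(r_j-t_j)+(s_j-t_j)\le n_j-t_j-1$.

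Your fallback can be made rigorous, but a one-sided exchange is not enough: if $(G\setminus\{z\})\cup\{w\}$ lands in $\cP$, you gain nothing. Instead, take $G\in\mathcal N$, $H\in\cP$ minimizing $|G\cap H|$, and suppose some $z\in(G\cap H)\setminus T$ lies in $X_j$.
\begin{itemize}
\item We have $|G_j\cup H_j|\le 2b_j-1$, and $|G_i\cup H_i|\le 2b_i-2$ in the shift block because $x\in G\cap H$. Hence $X_j\setminus(G\cup H\cup\{y\})$ contains two distinct points $w,w'$.
\item Put $G'=(G\setminus\{z\})\cup\{w\}$ and $H'=(H\setminus\{z\})\cup\{w'\}$. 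Both lie in $\cH_R[T]$ and avoid $y$.
\item If $G'\in\mathcal N$, use the pair $(G',H)$. If $H'\in\cP$, use $(G,H')$. Otherwise $G'\in\cP$ and $H'\in\mathcal N$, so use $(H',G')$.
\item In every case the new pair meets in $(G\cap H)\setminus\{z\}$, contradicting minimality.
\end{itemize}
So some pair meets exactly in $T$, and this gives two members of $\cF$ meeting in $t-1$ points. Without an argument of this kind, or the paper's Kneser-product argument, the proof is incomplete.
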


\begin{proof}
	Let \[\cF^+=S_{xy}(\cF)=\cH_R[T],\] where $x,y\in X_\ell$, $|T|=t$ and the shift replaces $y$ by $x$.  
	Write $\sigma=(x\,y)$.  
	A $\sigma$-orbit is \emph{fixed} if it is a singleton, in which case its unique member contains either both $x$ and $y$ or neither.  
	Every other orbit has two members,
	which we write as $\{L,U\}$ with $x\in L$ and $y\in U$.  On such an
	orbit the shift has the following effect:
	\[
	\begin{array}{c|c}
		\cF\cap\{L,U\} & \cF^+\cap\{L,U\} \\ \hline
		\varnothing & \varnothing \\
		\{L\} & \{L\} \\
		\{U\} & \{L\} \\
		\{L,U\} & \{L,U\}.
	\end{array}
	\]
	In particular, a fixed orbit is unchanged, the empty and two-member
	choices on a two-element orbit have unique preimages, and the image of a
	shift can never contain only $U$.

	Write $t_i=|T\cap X_i|$ and let
	\[
	R_T=\{\bs r=(r_1,\ldots,r_p)\in R:t_i\le r_i\text{ for every }i\in[p]\}.
	\]
	Since $\cH_R[T]=\cF^+\ne\varnothing$, the set $R_T$ is nonempty.
	
	Suppose first that $T$ contains both or neither of $x,y$.  On every
	two-element orbit, the star $\cH_R[T]$ contains either both members or
	neither member.  The table therefore determines the preimage uniquely.
	Together with the fact that every fixed orbit is unchanged, this gives
	$\cF=\cH_R[T]$.
	
	The case $y\in T$ and $x\notin T$ is impossible.  Indeed, choose
	$\bs r\in R_T$.  In the part $X_\ell$, the set
	$X_\ell\setminus(T\cup\{x\})$ has $n_\ell-t_\ell-1$ elements, and 
	$
	n_\ell-t_\ell-1\ge r_\ell-t_\ell
	$ 
	because $r_\ell\le b_\ell$ and $n_\ell>2b_\ell$.  Thus $T$ can be
	extended to a set $U\in\cH(\bs r)$ avoiding $x$.  Then
	$U\in\cH_R[T]$, whereas $L=\sigma(U)\notin\cH_R[T]$.  Hence
	$\cF^+$ contains only the upper member $U$ of this two-element orbit,
	a contradiction to the table.
	
	It remains to consider the case $x\in T$ and $y\notin T$. 
	Put $C=T\setminus\{x\}$ and
	$Z=X\setminus(C\cup\{x,y\})$.  For each $\bs r\in R_T$, let
	\[
	\cQ_{\bs r}=\left\{P\subseteq Z:
	|P\cap X_i|=r_i-t_i\text{ for every }i\in[p]\right\}.
	\]
	For $P\in\cQ_{\bs r}$, define
	\[
	L_{\bs r}(P)=C\cup\{x\}\cup P,
	\qquad
	U_{\bs r}(P)=C\cup\{y\}\cup P.
	\]
	These are the two members of a $\sigma$-orbit in $\cH(\bs r)$, and
	$\cH_R[T]$ contains exactly $L_{\bs r}(P)$.  The table shows that
	$\cF$ contains exactly one of $L_{\bs r}(P)$ and $U_{\bs r}(P)$.
	Accordingly, define the orientation of $(\bs r,P)$ as
	\[
	\varepsilon(\bs r,P)=
	\begin{cases}
		x,&L_{\bs r}(P)\in\cF,\\
		y,&U_{\bs r}(P)\in\cF.
	\end{cases}
	\]
	We next show $\varepsilon(\bs r,P)=\varepsilon(\bs s,Q)$ for any $\bs r,\bs s\in R_T$ and $P\in\cQ_{\bs r}$,  $Q\in\cQ_{\bs s}$.

	Define a graph $\Gamma$ by
	\[
	V(\Gamma)=\{(\bs r,P):\bs r\in R_T,\ P\in\cQ_{\bs r}\},
	\]
	where two distinct vertices $(\bs r,P)$ and $(\bs s,Q)$ are adjacent
	when $P\cap Q=\varnothing$.  If adjacent vertices have different
	orientations, then $\cF$ would contain either
	$L_{\bs r}(P)$ and $U_{\bs s}(Q)$ or
	$U_{\bs r}(P)$ and $L_{\bs s}(Q)$.  In either case the two selected
	sets would intersect exactly in $C$, and hence in only $t-1$ points.
	Thus
	\[
	\varepsilon(\bs r,P)=\varepsilon(\bs s,Q)
	\quad\text{if }((\bs r,P),(\bs s,Q))\text{ is an edge of }\Gamma.
	\]
	
	We next prove that $\Gamma$ is connected.  Fix $\bs r\in R_T$ and put
	$d_i=r_i-t_i$.  The subgraph induced by
	$\{\bs r\}\times\cQ_{\bs r}$ is the direct product of the
	nontrivial Kneser graphs
	\[
	KG(N_i,d_i),
	\qquad
	N_i=
	\begin{cases}
		n_i-t_i,&i\ne\ell,\\
		n_\ell-t_\ell-1,&i=\ell,
	\end{cases}
	\]
	with the coordinates satisfying $d_i=0$ omitted.  Since
	$n_i\ge2b_i+1$, for $i\ne\ell$ we have
	\[
	N_i-2d_i
	\ge 2(b_i-r_i)+t_i+1>0,
	\]
	whereas, using $t_\ell\ge1$, we have
	\[
	N_\ell-2d_\ell
	\ge 2(b_\ell-r_\ell)+t_\ell>0.
	\]
	Every nontrivial Kneser factor is therefore connected and nonbipartite. 
	If all $d_i=0$, then $\cQ_{\bs r}=\{\varnothing\}$,
	and the corresponding subgraph consists of a single vertex and
	is connected. Otherwise, 
	Weichsel \cite{Weichsel} proved that the direct product of two graphs is connected if and only if both are connected and at least one is nonbipartite. 
	It is also well known that  the
	direct product of nonbipartite graphs is again nonbipartite. 
	Therefore, repeated application of this result shows that the subgraph induced by
	$\{\bs r\}\times\cQ_{\bs r}$ is connected.
	
	It remains to connect different layers.  Let $\bs r,\bs s\in R_T$ be distinct.
	For every $i\in[p]$,
	\[
	(r_i-t_i)+(s_i-t_i)
	\le2(b_i-t_i)
	\le n_i-t_i-1
	\le |Z\cap X_i|.
	\]
	We may
	therefore choose $P\in\cQ_{\bs r}$ and $Q\in\cQ_{\bs s}$ disjointly in
	each part.  Then $(\bs r,P)$ and $(\bs s,Q)$ are adjacent in $\Gamma$.
	Thus the layer subgraphs are joined to one another, and $\Gamma$ is
	connected.  Consequently, $\varepsilon$ is constant on $V(\Gamma)$.

	If a two-element orbit is empty in $\cF^+=\cH_R[T]$, then the table
	shows that it is also empty in $\cF$.  Moreover, since the
	shift preserves cardinality in each layer, a layer outside $R_T$
	has an empty slice in $\cF^+$ and therefore also in $\cF$.
	
	If $\varepsilon$ is constantly $x$, then every two-element orbit selected
	by the target star chooses its lower member in $\cF$.  Together with the
	fixed and empty orbits, this gives $\cF=\cH_R[T]$.
	
	Suppose instead that $\varepsilon$ is constantly $y$, and put
	\[
	T'=C\cup\{y\}=(T\setminus\{x\})\cup\{y\}.
	\]
	The sets $T$ and $T'$ have the same number of points in every part. Hence
	they are admissible in exactly the same layers.  On a fixed orbit, its
	unique member contains $T$ if and only if it contains $T'$.  
	On every two-element orbit that meets $\cH_R[T]$, the lower member contains $T$ if and only if the upper member contains $T'$.  
	Since $\cF$ contains the upper member of every two-element orbit that meets $\cH_R[T]$, while every two-element orbit disjoint from $\cH_R[T]$ remains empty, we obtain $\cF=\cH_R[T']$. 
\end{proof}

\begin{proof}[\bf Proof of Theorem~\ref{thm:multi-layer}] 
	It is sufficient to show that every maximum $t$-intersecting subfamily $\cF$ of $\cH_R$ is a full $t$-star.

	Suppose first that $\cF$ is  shifted. 
	By Lemma~\ref{lem:support}, we obtain a $t$-intersecting generating antichain
	$\cB\subseteq2^Y$, where $Y\subseteq X$ with $|Y|\le pt$.

	If each member of $\cB$ contains a common $t$-subset $T$, then $U_R(\cB)\subseteq\cH_R[T]$.   
	The maximality shows that $\cF=U_R(\cB)$ is a full $t$-star. Next we assume that $|\bigcap_{B\in\cB}B|<t$. The $t$-intersection property ensures each member of $\cB$ has size at least $t+1$.

	For $u\ge t+1$, put $\cB(u)=\cB\cap\binom Yu$. By  Lemma \ref{lem:cylinder-vs-star}, we have
	\begin{equation}\label{eq:add-proof1}
		|\cF|=|U_R(\cB)|\le \sum_{B\in\cB}|\cH_R[B]|\le M_R\sum_{u=t+1}^{|Y|}|\cB(u)|q^{u-t}.
	\end{equation}
	Katona's intersection-shadow theorem \cite{KatonaShadow} states that the $(u-t)$-shadow of the $t$-intersecting family $\cB(u)$ has size at least $|\cB(u)|$; here the shadow consists of all $(u-t)$-sets contained in at least one member of $\cB(u)$.  This shadow is contained in $\binom{Y}{u-t}$, and hence
	\[
	|\cB(u)|\le\binom{|Y|}{u-t}.
	\]
	Observe that $(1+(3pt)^{-1})^{pt}\le e^{1/3}<2$. 
	This together with \eqref{eq:add-proof1}, $q\le1/(3pt)$ and $|Y|\le pt$ yields
	\[
	|\cF|\le M_R\sum_{j=1}^{|Y|-t}\binom{|Y|}{j}q^j\le M_R\bigl((1+q)^{|Y|}-1\bigr)\le M_R\bigl((1+(3pt)^{-1})^{pt}-1\bigr)<M_R,
	\]
	a contradiction to the assumption that $|\cF|$ is maximal.

	Next suppose that $\cF$ is  non-shifted.  
	Set $\cF_0=\cF$. 
	Starting from $\cF_0$, repeatedly apply any shift inside a part that changes the current family. 
	After finitely many steps, we obtain a sequence of families $\cF_0,\cF_1,\dots,\cF_m$, where $\cF_m$ is shifted. 
	Since $|\cF_m|=|\cF_0|$, $\cF_m$ is a shifted maximum $t$-intersecting family, and therefore is a full $t$-star. 
	Recall that $n_i\ge3ptb_i>2b_i$ for every $i\in[p]$. 
	Starting from $\cF_m$, apply Lemma~\ref{lem:unshift} backwards along the finite sequence of families. 
	We finally conclude that $\cF=\cF_0$ is  a full $t$-star, as desired.
\end{proof}

\section{The nontrivial intersection problem}\label{sec:nontrivial}

In this section, we work in $\cH(\bs k)$, where $\bs k=(k_1,\dots,k_p)$ with $1\le k_i<n_i$ for every $i\in[p]$, and put $K=\sum_i k_i$.  

Recall that a set $C\subseteq X$ is admissible if $|C\cap X_i|\le k_i$ for every $i$.  
We remark here that if $n_i\ge2k_i$ for every $i$, then for each admissible set $B$ and $z\in X_i\setminus B$, since $n_i-|B\cap X_i|-1\ge k_i-|B\cap X_i|$,  $B$ can be extended to a member of $\cH(\bs k)$ avoiding $z$, and we further conclude that
\begin{equation}\label{eq:cylinder-core}
	\bigcap_{F\in\cH(\bs k)[B]}F=B.
\end{equation}

We will also repeatedly use the following observation. 
A maximum nontrivial $t$-intersecting subfamily of $\cH(\bs k)$ is inclusion-maximal among all $t$-intersecting subfamilies of $\cH(\bs k)$.
 Indeed, adjoining a new member can only decrease the common intersection, so it cannot turn a nontrivial $t$-intersecting family into a trivial one.

\subsection{Examples of nontrivial $t$-intersecting families}

In this subsection, we analyze the family $\cH^\times(T,Y)$ described in Section \ref{HMR}.

Let $\cP_t(\bs k)$ consist of all pairs $(\bs a,\bs\ell)\in\mathbb{Z}_{\ge0}^p\times\mathbb{Z}_{\ge0}^p$ satisfying
\[
0\le a_i\le k_i\quad(i\in[p]),\qquad
\sum_i a_i=t,\qquad
\sum_i\ell_i\ge2,
\]
and which, in addition, satisfy
\begin{equation}\label{eq:HM-profile-cover}
	\ell_i>0\ \Longrightarrow\ a_i<k_i
	\qquad(i\in[p]),
\end{equation}
\begin{equation}\label{eq:HM-profile-exception}
	\ell_i\le k_i-a_i+1,\quad\ell_i=k_i-a_i+1\ \Longrightarrow\ a_i>0\quad(i\in[p]),\quad|\{i\in[p]:\ell_i=k_i-a_i+1\}|\le1.
\end{equation}

Suppose $K\ge t+2$, and pick $\bs a\in\mathbb{Z}_{\ge0}^p$ with $\sum_i a_i=t$ and $0\le a_i\le k_i$ for each $i$. 
Since $K\ge t+2$, there exists $\bs\ell\in\mathbb{Z}_{\ge0}^p$ with $\sum_i\ell_i=2$ and $0\le\ell_i\le k_i-a_i$ for each $i$. 
Then $(\bs a,\bs\ell)\in\cP_t(\bs k)$ and $\cP_t(\bs k)\neq\varnothing$. 
Furthermore, for a pair $(T,Y)$ of subsets with $|Y|\ge2$ and $T\in\binom{X\setminus Y}{t}$,   its profile belongs to $\cP_t(\bs k)$ if and only if $(T,Y)$ is feasible. 
Indeed,  condition~\eqref{eq:HM-profile-cover}, together with $0\le a_i\le k_i$, is equivalent to the admissibility of every $T\cup\{y\}$ with $y\in Y$, while condition~\eqref{eq:HM-profile-exception} is equivalent to the existence of an $x\in T$ such that $(T\setminus\{x\})\cup Y$ is admissible.

\begin{proposition}
	\label{prop:product-HM}
	Let $T,Y\subseteq X$ with $|T|=t$, $|Y|\ge2$ and $T\cap Y=\varnothing$. 
	Assume that $(\bs a,\bs\ell)$ is the profile of $(T,Y)$. 
	The family $\cH^\times(T,Y)$ is $t$-intersecting and has cardinality $H^\times_{\bs n,\bs k}(\bs a,\bs\ell)$.  
	Suppose in addition that $n_i\ge2k_i$ for every $i$, and that $(\bs a,\bs\ell)\in\cP_t(\bs k)$. 
	Then $\cH^\times(T,Y)$ is nontrivial and inclusion-maximal among the $t$-intersecting subfamilies of $\cH(\bs k)$.
\end{proposition}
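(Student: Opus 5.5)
The proof splits into three checks: $t$-intersection, the cardinality count, and (under the extra hypotheses) nontriviality and maximality. Throughout, write $\cH^\times(T,Y)=\cF_0\cup\bigcup_{x\in T}\cF_x$, where $\cF_0=\{F:T\subseteq F,\ F\cap Y\ne\varnothing\}$ and $\cF_x=\{F:F\cap T=T\setminus\{x\},\ Y\subseteq F\}$.

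\emph{$t$-intersection.} There are three kinds of pairs.
\begin{itemize}
\item Two members of $\cF_0$ share $T$, so they meet in at least $t$ points.
\item Take $F\in\cF_0$ and $G\in\cF_x$. They share $T\setminus\{x\}$, and $F$ contains some $y\in Y\subseteq G$. This gives $t$ common points.
\item Take $G\in\cF_x$ and $G'\in\cF_{x'}$. If $x=x'$, they share $(T\setminus\{x\})\cup Y$, which has at least $t-1+2$ points. If $x\ne x'$, they share $T\setminus\{x,x'\}$ together with $Y$, which has $t-2+|Y|\ge t$ points.
\end{itemize}

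\emph{Cardinality.} The three parts are pairwise disjoint because they are distinguished by $F\cap T$. Counting members of $\cH(\bs k)$ containing $T$ gives $\prod_i\binom{n_i-a_i}{k_i-a_i}$. Subtracting those that also avoid $Y$ gives the first two terms of $H^\times$.

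For $\cF_x$ with $x\in X_i$, the set $F$ must contain $(T\setminus\{x\})\cup Y$ and avoid $x$.
\begin{itemize}
\item In block $X_i$, it chooses $k_i-(a_i-1)-\ell_i$ further points from $n_i-a_i-\ell_i$ available ones.
\item In each block $X_j$ with $j\ne i$, it chooses $k_j-a_j-\ell_j$ points from $n_j-a_j-\ell_j$.
\end{itemize}
Summing over the $a_i$ choices of $x\in T\cap X_i$ gives exactly the last sum in $H^\times$. The convention that binomials vanish outside range handles infeasible cases automatically.

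\emph{Nontriviality.} Assume now $n_i\ge 2k_i$ and $(\bs a,\bs\ell)\in\cP_t(\bs k)$, so that $(T,Y)$ is feasible.
\begin{itemize}
\item By \eqref{eq:HM-profile-exception}, some $x\in T$ makes $(T\setminus\{x\})\cup Y$ admissible. By \eqref{eq:cylinder-core}, $\cF_x$ is nonempty, and the common intersection of its members is exactly $(T\setminus\{x\})\cup Y$.
\item Since each $T\cup\{y\}$ is admissible, \eqref{eq:cylinder-core} gives a member of $\cF_0$ containing $T$ and $y$ while avoiding a prescribed $y'\in Y\setminus\{y\}$ (possible since $|Y|\ge2$). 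Ranging over $y$, the intersection of all members of $\cF_0$ is exactly $T$.
\end{itemize}
Hence the common intersection of the whole family lies in $T\cap((T\setminus\{x\})\cup Y)=T\setminus\{x\}$, which has size $t-1$, so the family is nontrivial.

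\emph{Maximality.} Suppose $G\in\cH(\bs k)\setminus\cH^\times(T,Y)$ is $t$-intersecting with every member. I would derive a contradiction by producing, via \eqref{eq:cylinder-core}, a member meeting $G$ in fewer than $t$ points. Split into cases by $G\cap T$.
\begin{itemize}
\item If $T\subseteq G$, then $G\cap Y=\varnothing$. Take the $x$ above and a member $H\in\cF_x$ with $H\cap G$ minimal. Here $H\cap G\cap T=T\setminus\{x\}$, and $H\cap G\cap Y=\varnothing$. Outside $T\cup Y$, the sets can be made disjoint using $n_i\ge 2k_i$, which leaves $t-1$ common points.
\item If $|G\cap T|=t-1$, say $G\cap T=T\setminus\{x\}$, then $Y\not\subseteq G$; pick $y\in Y\setminus G$. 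Choose $H\in\cF_0$ containing $T\cup\{y\}$ (admissible) and otherwise avoiding $G$. Then $|H\cap G|=t-1$.
\item If $|G\cap T|\le t-2$, choose $H\in\cF_0$ containing $T\cup\{y\}$ for some $y$ and otherwise disjoint from $G$. Then $|H\cap G|\le t-2+1<t$.
\end{itemize}

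The main obstacle is the disjoint extension. We must make $H$ avoid $G$ outside the forced set, which needs $n_i-|\text{forced}_i|-|G_i\setminus\text{forced}_i|\ge k_i-|\text{forced}_i|$. This follows from $n_i\ge 2k_i$ since $|G_i|=k_i$, but the count must be checked carefully in the block containing the special elements.
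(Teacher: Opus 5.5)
Your proposal is correct and follows the paper's proof essentially step for step. It uses the same three-case $t$-intersection check and count, nontriviality from \eqref{eq:cylinder-core} together with a member of $\cF_x$ avoiding $x$, and maximality via the case split on $|G\cap T|$ using an extension $H\supseteq B$ with $H\cap G=B\cap G$, which $n_i\ge 2k_i$ guarantees. In the case $T\subseteq G$ the point $x$ lies in $G$, so the extra care you anticipate for the block containing $x$ is not needed.

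The only slip is harmless. You claim $\bigcap\cF_x$ is exactly $(T\setminus\{x\})\cup Y$, which can fail, for instance when $k_i=1$, $n_i=2$ and $x\in X_i$. Your argument, however, only uses that $\cF_x\neq\varnothing$ and that its members avoid $x$.
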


\begin{proof}
	Let $F$ and $F'$ be two members of $\cH^\times(T,Y)$. 
	If $T\subseteq F\cap F'$, then $|F\cap F'|\ge t$; 
	if $T\subseteq F$ and $T\not\subseteq F'$, then $|F\cap F'|\ge|F'\cap T|+|F\cap Y|\ge t$; 
	if $T\not\subseteq F$ and $T\not\subseteq F'$, then $|F\cap F'|\ge|F\cap F'\cap T|+|Y|\ge(t-2)+2=t$.  
	Thus the family is $t$-intersecting.

A direct calculation shows that the number of members of $\cH^\times(T,Y)$ containing $T$ is
\[\prod_{i=1}^p\binom{n_i-a_i}{k_i-a_i}
-\prod_{i=1}^p\binom{n_i-a_i-\ell_i}{k_i-a_i},\]
	and for a fixed $x\in T\cap X_i$, the number of members containing $T\setminus\{x\}$ but not containing $T$ is 
\[
\binom{n_i-a_i-\ell_i}{k_i-a_i-\ell_i+1}
\prod_{j\ne i}
\binom{n_j-a_j-\ell_j}{k_j-a_j-\ell_j}.
\]
Then the desired formula follows.

Now assume the additional hypotheses. 
By
\eqref{eq:cylinder-core} and $|Y|\ge2$, we have
\[
\bigcap_{y\in Y}
\bigcap_{F\in\cH(\bs k)[T\cup\{y\}]}F
=\bigcap_{y\in Y}(T\cup\{y\})=T.
\]
Note that there exists $x_0\in T$ such that the set $(T\setminus\{x_0\})\cup Y$ is admissible. 
This set can be extended to a member of $\cH(\bs k)$ avoiding $x_0$. 
Furthermore, $|\bigcap_{F\in\cH^\times(T,Y)}F|\le t-1$, and $\cH^\times(T,Y)$ is therefore nontrivial.

To show the inclusion-maximality of $\cH^\times(T,Y)$, we need the following elementary extension observation. 
If $B$ is admissible and $G\in\cH(\bs k)$, then it follows from  $n_i\ge2k_i$ and
\[|X_i\setminus(B\cup G)|=n_i-|B\cap X_i|-k_i+|B\cap G\cap X_i|
\ge k_i-|B\cap X_i|\]
for each $i\in[p]$ that there exists $F\in\cH(\bs k)[B]$  with $F\cap G=B\cap G$.

Let $G\in\cH(\bs k)\setminus\cH^\times(T,Y)$ and put $r=|G\cap T|$. 
If $r\le t-2$, then choose  $y_1\in Y$ and apply the extension observation to $T\cup\{y_1\}$. 
The resulting set belongs to $\cH^\times(T,Y)$ and meets $G$ in at most $r+1<t$ points. 
If $r=t-1$, then by $G\notin\cH^\times(T,Y)$, we have $Y\nsubseteq G$. 
Choose $y_2\in Y\setminus G$ and apply the extension observation to $T\cup\{y_2\}$. 
The resulting set belongs to $\cH^\times(T,Y)$ and  meets $G$ in exactly $t-1$ points. 
Finally, if $r=t$, then $G\cap Y=\varnothing$ by $G\notin\cH^\times(T,Y)$. 
Choose $x\in T$ for which $(T\setminus\{x\})\cup Y$ is admissible, and apply the extension observation to $(T\setminus\{x\})\cup Y$. 
The resulting  set belongs to $\cH^\times(T,Y)$ and meets $G$ in exactly $t-1$ points. 
In summary, the family $\cH^\times(T,Y)\cup\{G\}$ is not $t$-intersecting. 
Consequently, $\cH^\times(T,Y)$ is 	inclusion-maximal.
\end{proof}

Let $Z$ be a $(t+2)$-subset of $X$. 
For convenience, write
\[
\cA_1^\times(Z)=\{F\in\cH(\bs k):|F\cap Z|\ge t+1\}.
\]
It is $t$-intersecting whether or not $Z$ is admissible. 
Pick any $t$-subset $T$ of $Z$. 
It is routine to check
\begin{equation}\label{eq:HM-ball-is-HM}
	\cA_1^\times(Z)=\cH^\times(T,Z\setminus T).
\end{equation}
Let  $\cI(Z)=\{Z\setminus\{z\}:z\in Z,\ Z\setminus\{z\}\text{ is admissible}\}$. 
We have
\[
\cA_1^\times(Z)=\bigcup_{C\in\cI(Z)}\cH(\bs k)[C].
\]
If $n_i\ge2k_i$ for every $i$ and $\cI(Z)\ne\varnothing$,
then 
\[
\bigcap_{F\in\cA_1^\times(Z)}F=\bigcap_{C\in\cI(Z)}C.
\]
If $Z$ is admissible, then $\cI(Z)=\binom{Z}{t+1}$, and $\cA_1^\times(Z)$ is nontrivial. 
If $Z$ is not admissible, then $\cA_1^\times(Z)$ may be empty, trivial, or nontrivial, depending on which $(t+1)$-subsets are admissible.

We now turn to determining which profiles in $\cP_t(\bs k)$ can maximize $H^\times_{\bs n,\bs k}(\bs a,\bs\ell)$. 
\begin{lemma}\label{lem:HM-profile-endpoints}
	Suppose that $n_i\ge2k_i+2$ for every $i\in[p]$. 
	If $(\bs a,\bs\ell)$ maximizes $H^\times_{\bs n,\bs k}(\bs a,\bs\ell)$ over $\cP_t(\bs k)$ and $\sum_i\ell_i\ge3$, then $(\bs a,\bs\ell)\in\cE_t(\bs k)$.	
\end{lemma}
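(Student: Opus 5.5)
We argue by local improvement. Fix a maximizer $(\bs a,\bs\ell)\in\cP_t(\bs k)$ with $L:=\sum_i\ell_i\ge3$. We will show that if it violates the structure in (ii) or (iii), then some neighbouring profile in $\cP_t(\bs k)$ has strictly larger $H^\times$. Note that (ii) and (iii) require $\ell_i\in\{0,k_i\}$ and $\ell_i\in\{0,k_i-a_i\}$ respectively; these are the only values at which the coordinate is ``saturated''. Write $H^\times=A-B+C$, where
\[A=\prod_i\binom{n_i-a_i}{k_i-a_i},\qquad B=\prod_i\binom{n_i-a_i-\ell_i}{k_i-a_i}\]
and $C$ is the sum of the extra terms. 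With $\bs a$ fixed, $A$ is constant and $B$ decreases as $\bs\ell$ grows. The term $C$ is nonzero only when some coordinate with $a_i>0$ and every other coordinate satisfy $\ell_j\le k_j-a_j$, except for the one allowed coordinate where $\ell_h=k_h-a_h+1$.

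\textbf{Step 1 (a coordinate strictly between saturation values).} Suppose some $i$ has $0<\ell_i<k_i-a_i$. We compare $\bs\ell$ with $\bs\ell\pm e_i$; both remain in $\cP_t(\bs k)$ as long as $L-1\ge2$, which holds since $L\ge3$. Define $\Phi(\ell_i)$ as $H^\times$ viewed as a function of $\ell_i$ with everything else fixed. The term $-B$ contributes $\prod_{j\ne i}\binom{n_j-a_j-\ell_j}{k_j-a_j}$ times the decreasing convex sequence $\binom{n_i-a_i-\ell_i}{k_i-a_i}$, so this part is a concave increasing function of $\ell_i$. The $C$ terms are products of $\binom{n_i-a_i-\ell_i}{k_i-a_i-\ell_i}$ or $\binom{n_i-a_i-\ell_i}{k_i-a_i-\ell_i+1}$ with constants; these are log-convex in $\ell_i$, with ratio between consecutive terms about $(k_i-a_i-\ell_i)/(n_i-a_i-\ell_i)$, which is small when $n_i\ge2k_i+2$. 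We aim to show that $\Phi(\ell_i+1)+\Phi(\ell_i-1)>2\Phi(\ell_i)$ fails in the wrong direction. More precisely, the decreasing part $C$ is convex and the increasing part is concave, so we instead compare $\Phi(\ell_i)$ with the endpoint values directly.

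The cleaner route, which we will pursue first, is to show that $\Phi(\ell_i+1)-\Phi(\ell_i)$ and $\Phi(\ell_i)-\Phi(\ell_i-1)$ cannot have opposite signs with $\Phi(\ell_i)$ the strict maximum, with equality excluded. Concretely, we write each difference as a positive gain from $B$ minus a loss from $C$. The gain from $B$ is $\binom{n_i-a_i-\ell_i-1}{k_i-a_i-1}\cdot\prod_{j\ne i}$. The loss from $C$ shrinks geometrically with ratio at most $(k_i-a_i)/(n_i-k_i)\le 1/2$ by the hypothesis $n_i\ge2k_i+2$, while the gain shrinks only by the ratio $(k_i-a_i-\ell_i)/(n_i-a_i-\ell_i)$. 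Sign bookkeeping of these two differences will show that an interior $\ell_i$ is never a strict maximum. Ties will be broken toward an endpoint, since the statement only requires that the maximizer lie in $\cE_t(\bs k)$; we will check that the weak inequalities are in fact strict, which is where $n_i\ge2k_i+2$ is used rather than $2k_i$. We expect this convexity/concavity sign analysis to be the \textbf{main obstacle}, because the $C$ term mixes the coordinate $i$ into a sum over all coordinates.

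\textbf{Step 2 (reduce to the two cases).} After Step 1, every $\ell_i\in\{0,k_i-a_i,k_i-a_i+1\}$, and by \eqref{eq:HM-profile-exception} at most one coordinate $h$ has $\ell_h=k_h-a_h+1$, with $a_h\ge1$.

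\emph{Case A: such an $h$ exists.} If $a_h=k_h$, then $\ell_h=1$ with $a_h=k_h$, which contradicts \eqref{eq:HM-profile-cover}. Hence $1\le a_h\le k_h-1$, and we are in (iii).

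\emph{Case B: no such $h$ exists.} Then $\ell_i\in\{0,k_i-a_i\}$ for every $i$. We must show that $a_i=0$ whenever $\ell_i>0$, which gives $\ell_i=k_i$ and hence (ii). Suppose instead that $\ell_i=k_i-a_i>0$ with $a_i>0$. We compare with the profile $\ell_i+1$, which is allowed since no exceptional coordinate exists yet. This moves $B$ by a factor of $0$ in coordinate $i$ (already $\binom{n_i-k_i}{k_i-a_i}$ shrinking further), and it converts the $C$ contribution, computing directly from the definition of $H^\times$. We will show this strictly increases $H^\times$, again using $n_i\ge2k_i+2$ to dominate the lost terms. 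This would contradict maximality, completing the proof.
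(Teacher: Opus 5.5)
Your skeleton matches the paper's: freeze every coordinate except $\ell_i$, show that no interior value of its feasible range can be a maximizer, then sort the surviving profiles into (ii)/(iii). However, there are two genuine gaps.

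\textbf{Step~1 is only announced, and the sketch points the wrong way.} $\Phi$ is a concave increasing sequence plus a convex decreasing one, so no convexity statement is available. The per-step factor you attribute to the gain, $(k_i-a_i-\ell_i)/(n_i-a_i-\ell_i)$, is essentially that of the loss. The gain $\binom{n_i-a_i-r-1}{k_i-a_i-1}$ actually shrinks by $(n_i-k_i-r)/(n_i-a_i-r-1)$, which is close to $1$. With your numbers the gain would decay faster than the loss, the opposite of what you need. The missing idea is this. With $N=n_i-a_i$, $c=n_i-k_i$, $d=k_i-a_i$ and $x=N-r-1$,
\[
\Phi(r+1)-\Phi(r)=\gamma_1\binom{x}{d-1}-\gamma_2\binom{x}{c-1}-\gamma_3\binom{x}{c-2}.
\]
Here $\gamma_1>0$ comes from $B$, $\gamma_3$ is the coefficient of the $i$th extra term, and $\gamma_2$ collects the remaining extra terms. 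All three are independent of $r$, and $\gamma_2,\gamma_3\ge0$ are not both zero. Since $c-2\ge d$ (exactly where $n_i\ge2k_i+2$ enters), both $\binom{x}{c-1}/\binom{x}{d-1}$ and $\binom{x}{c-2}/\binom{x}{d-1}$ are strictly increasing in $x$. Hence the normalized difference is strictly increasing in $r$: it changes sign at most once, from negative to positive, and vanishes at most once. So every interior point of the feasible interval is \emph{strictly} beaten by one of its neighbours. Strictness is not optional. The lemma concerns every maximizer, so ``breaking ties toward an endpoint'' is not available to you.

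\textbf{Case~B is false as stated.} Raising $\ell_i$ from $k_i-a_i$ to $k_i-a_i+1$ can decrease $H^\times$. Take $p=2$, $t=1$, $\bs k=(2,2)$, $\bs n=(100,6)$ (so $n_i\ge2k_i+2$ holds), $\bs a=(1,0)$, $\bs\ell=(1,2)$. This profile lies in $\cP_1(\bs k)\setminus\cE_1(\bs k)$, falls in your Case~B, and has $H^\times=1485-588+98=995$. Raising to $\bs\ell=(2,2)$ gives $1485-582+1=904$, which is smaller. The improvement is downward: $\bs\ell=(0,2)$ gives $1485-594+4851=5742$.

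The repair is to drop the separate case. When no coordinate is exceptional and $0<a_i<k_i$, the feasible values of $\ell_i$ form the interval from $\max\{0,2-\sum_{j\ne i}\ell_j\}$ to $k_i-a_i+1$. The condition $\sum_j\ell_j\ge3$ makes $\ell_i=k_i-a_i$ an interior point, so the two-sided argument above applies. Run Step~1 over the whole feasible interval of each coordinate, as the paper does. Then each $\ell_i$ is either $0$ or the top of its interval, and your case split into (ii) and (iii) goes through.
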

\begin{proof}
Fix $s\in[p]$. 
For $r\in\mathbb Z_{\ge0}$, let $\bs\ell^{(r)}$ be obtained from $\bs\ell$ by replacing its $s$th coordinate with $r$. 
Write
\[
I_s=\{r\in\mathbb Z_{\ge0}:
(\bs a,\bs\ell^{(r)})\in\cP_t(\bs k)\}.
\]
Since $\ell_s\in I_s$, we have $I_s\neq\varnothing$. 
Put
\[
d_i=k_i-a_i\quad(i\in[p]),\qquad
v_s=\sum_{i\ne s}\ell_i,\qquad
N=n_s-a_s,\qquad c=n_s-k_s.
\]
Let $L_s=\max\{0,2-v_s\}$. 
For any $r\in I_s$, it follows from $r+v_s\ge 2$ and $r\ge0$ that $r\ge L_s$. 
Write
\begin{equation*}
	U_s=\begin{cases}
		0,&a_s=k_s,\\
		d_s,&\text{$a_s<k_s$ and $\ell_i=d_i+1$ for some $i\neq s$},\\
		d_s+1,&\text{$0<a_s<k_s$ and $\ell_i\neq d_i+1$ for every $i\neq s$},\\
		k_s,&\text{$a_s=0$ and $\ell_i\neq d_i+1$ for every $i\neq s$}.
	\end{cases}
\end{equation*}
The defining conditions \eqref{eq:HM-profile-cover} and \eqref{eq:HM-profile-exception} show that $r\le U_s$. 
Then $I_s\subseteq[L_s,U_s]\cap\mathbb Z$. 
Conversely, if $r\in\mathbb Z$ satisfies $L_s\le r\le U_s$, then $r+v_s\ge2$, and the definition of $U_s$ ensures that \eqref{eq:HM-profile-cover} and \eqref{eq:HM-profile-exception} hold. 
Since all other coordinates remain unchanged, we have $(\bs a,\bs\ell^{(r)})\in\cP_t(\bs k)$, and it follows that
\[
I_s=[L_s,U_s]\cap\mathbb Z.
\]

For $r\in I_s$, set
\[
M_s(r)=H^\times_{\bs n,\bs k}(\bs a,\bs\ell^{(r)}).
\]
We can write
\begin{equation*}
	M_s(r)=\gamma_0-\gamma_1\binom{N-r}{d_s}
	+\gamma_2\binom{N-r}{c}+\gamma_3\binom{N-r}{c-1},
\end{equation*}
where $\gamma_0$ is independent of $r$, and 
\begin{align*}
	\gamma_1&=\prod_{i\ne s}\binom{n_i-a_i-\ell_i}{d_i},\\
	\gamma_2&=\sum_{j\ne s}a_j
	\binom{n_j-a_j-\ell_j}{d_j-\ell_j+1}
	\prod_{i\ne s,j}\binom{n_i-a_i-\ell_i}{d_i-\ell_i},\\
	\gamma_3&=a_s\prod_{i\ne s}
	\binom{n_i-a_i-\ell_i}{d_i-\ell_i}.
\end{align*}
\begin{claim}
	If  $z\in I_s\setminus\{L_s,U_s\}$, then \[M_s(z)<\max\{M_s(L_s),M_s(U_s)\}.\] Consequently, $\ell_s\in\{L_s,U_s\}$. 
\end{claim}
\begin{proof}
	To prove the desired inequality, suppose that $z\in I_s\setminus\{L_s,U_s\}$. 
	If $d_s=0$, then $a_s=k_s$. 
	This together with \eqref{eq:HM-profile-cover} gives $\ell_s=0$ and hence $v_s\ge2-\ell_s=2$. 
	Then $L_s=U_s=0$ and $I_s=\{0\}$, a contradiction to the existence of $z$. 
	Therefore $d_s\ge1$.

	We next show that $\gamma_1>0$, $\gamma_2,\gamma_3\ge0$ and $\gamma_2+\gamma_3>0$.
	It follows from  $I_s\ne\varnothing$ that  $\ell_i\le d_i+1$ for every $i\ne s$.
	Then
\[
n_i-a_i-\ell_i
\ge n_i-a_i-d_i-1=n_i-k_i-1\ge k_i+1\ge d_i,
\]
implying that every factor in $\gamma_1$ is positive and $\gamma_1>0$.

If  $\ell_h=d_h+1$ for some $h\neq s$, then  $a_h>0$, and the corresponding summand in $\gamma_2$ is positive. 
If $a_s=0$ and $\ell_i\le d_i$ for every $i\neq s$, then, since $\sum_i a_i=t\ge1$, there exists $j\neq s$ with $a_j>0$, and the corresponding summand in $\gamma_2$ is positive. 
Finally, if $a_s>0$ and $\ell_i\le d_i$ for every $i\neq s$,  then every factor in $\gamma_3$ is positive, and hence  $\gamma_3>0$.  
We now conclude that at least one of $\gamma_2$ and $\gamma_3$ is positive. 
Observe that $\gamma_2,\gamma_3\ge0$. 
Then $\gamma_2+\gamma_3>0$. 

	Pascal's identity, with $x=N-r-1$ and $r\in[L_s,U_s-1]\cap\mathbb Z$, gives
\begin{equation}\label{eq:HM-profile-difference}
	M_s(r+1)-M_s(r)
	=\gamma_1\binom{x}{d_s-1}-\gamma_2\binom{x}{c-1}-\gamma_3\binom{x}{c-2}.
\end{equation}
The numerical hypothesis gives $c-2>d_s-1$. 
We also have $x\ge c-2$ since if $a_s=0$, then $r\le k_s$ and in fact $x\ge c-1$, and if $a_s>0$, then $r\le d_s+1$ and $x\ge c-2$. 
Thus $\binom{x}{d_s-1}>0$.

For $q\in\{c-2,c-1\}$, put
\[
R_q(x)=\frac{\binom xq}{\binom{x}{d_s-1}}.
\]
Both $R_{c-2}(x)$ and $R_{c-1}(x)$ are strictly increasing for integers $x\ge c-2$. 
Indeed, $R_{c-1}(c-2)=0<R_{c-1}(c-1)$, and whenever $x\ge q$,
\[
\frac{R_q(x+1)}{R_q(x)}=\frac{x-d_s+2}{x-q+1}>1.
\]
By $\gamma_1\binom{x}{d_s-1}>0$, the sign of \eqref{eq:HM-profile-difference} is the sign of
\[
g(x)=1-\frac{\gamma_2}{\gamma_1} R_{c-1}(x)-\frac{\gamma_3}{\gamma_1} R_{c-2}(x).
\]
Recall that $\gamma_2,\gamma_3\ge0$ and they are not both zero.  
The function $g(x)$ is strictly decreasing in $x$.  
Since $x=N-r-1$ strictly decreases as $r$ increases, $g(N-r-1)$ is strictly increasing in $r$. 
Therefore, as $r$ increases, the sign of  $M_s(r+1)-M_s(r)$ can change only from negative to positive, and the difference can vanish for at most one value of $r$. 
Then the desired inequality follows from the assumption that $z\in I_s\setminus\{L_s,U_s\}$. 

If $\ell_s\in I_s\setminus\{L_s,U_s\}$, then applying the inequality with $z=\ell_s$ gives \[M_s(\ell_s)<\max\{M_s(L_s),M_s(U_s)\}.\] 
This together with $L_s,U_s\in I_s$ yields a contradiction  to the assumption that  $(\bs a,\bs\ell)$ maximizes $H^\times_{\bs n,\bs k}(\bs a,\bs\ell)$ over $\cP_t(\bs k)$. 
Hence $\ell_s\in\{L_s,U_s\}$.
\end{proof}

To get the desired result,  next we prove that one of the following holds:
\begin{enumerate}[label=\textup{(\roman*)},leftmargin=2.5em]
	\item for every $i\in[p]$, $\ell_i\in\{0,k_i\}$ and $\ell_i\le k_i-a_i$;
	\item there is a unique index $h$ with $1\le a_h\le k_h-1$ and $\ell_h=k_h-a_h+1$, while $\ell_i\in\{0,k_i-a_i\}$ for every $i\ne h$.
\end{enumerate}
The preceding claim shows that, for every $i\in[p]$, $\ell_i\in\{L_i,U_i\}$.  
If $\ell_m=L_m>0$ for some $m\in[p]$, then $L_m=2-v_m$ and $\sum_i\ell_i=v_m+\ell_m=v_m+L_m=2$, a contradiction to the assumption that $\sum_i\ell_i\ge3$. 
Therefore $\ell_i\in\{0,U_i\}$ for every $i\in[p]$. 

Suppose first that $\ell_i\le k_i-a_i$ for every $i\in[p]$. 
If $a_i=0$, then $U_i=k_i$ and $\ell_i\in\{0,k_i\}$.
If $0<a_i<k_i$, then $U_i=k_i-a_i+1$. 
This together with $\ell_i\le k_i-a_i$ yields $\ell_i=0\in\{0,k_i\}$. 
If $a_i=k_i$, then $U_i=0$ and hence $\ell_i=0$. 
This proves (i).

Now suppose  that $\ell_h=k_h-a_h+1$ for some $h\in[p]$. 
By \eqref{eq:HM-profile-exception}, such $h$ is unique and $a_h>0$.
We also have  $\ell_i\le k_i-a_i$ whenever $i\neq h$. 
Since $\ell_h>0$, it follows from \eqref{eq:HM-profile-cover} that $a_h<k_h$. 
For $i\neq h$, if $a_i<k_i$, then $U_i=k_i-a_i$ and $\ell_i\in\{0,k_i-a_i\}$. 
If $a_i=k_i$, then $U_i=0$ and hence $\ell_i=0$.  
This proves (ii).	 
\end{proof}

\subsection{Admissible $t$-covers and an upper bound on sizes of $t$-intersecting families}

For a nonempty $t$-intersecting family $\cF$, call a set $S\subseteq X$  a \emph{$t$-cover} of $\cF$ if $|S\cap F|\ge t$ for any $F\in\cF$, and put
\[
\tau_t(\cF)=
\min\bigl\{|C|:C\text{ is admissible and is a }t\text{-cover of }\cF\bigr\}.
\]
Every member of a $t$-intersecting family is an admissible $t$-cover, and hence $t\le\tau_t(\cF)\le K$. 
Moreover, $\tau_t(\cF)=t$ exactly when $\cF$ is trivial.  
In the following, for $0\le r\le K$, write
\begin{equation*}
	S_r=\max\{|\cH(\bs k)[C]|:C\text{ is admissible and }|C|=r\}.
\end{equation*}
Recall that $n_*=\min_{i\in[p]}n_i$.

\begin{lemma}\label{lem:HM-cylinders}
	Assume $n_*>K$ and put $\alpha=K/(n_*-K)$. 	
	Let $i\in[p]$, $C\subseteq X$ and $u\in X_i\setminus C$. 
	If $C\cup\{u\}$ is admissible, then
	\begin{equation}\label{eq:HM-one-point-ratio}
		\frac1{n_i}\le
		\frac{|\cH(\bs k)[C\cup\{u\}]|}{|\cH(\bs k)[C]|}
		=\frac{k_i-|C\cap X_i|}{n_i-|C\cap X_i|}
		\le\alpha.
	\end{equation}
	Consequently $S_{r+1}\le\alpha S_r$ for $0\le r<K$.  
	Moreover, there are admissible
	sets $C^{(0)}\subseteq C^{(1)}\subseteq\cdots\subseteq C^{(K)}$ such that
	$|C^{(r)}|=r$ and $|\cH(\bs k)[C^{(r)}]|=S_r$ for every $r$.
\end{lemma}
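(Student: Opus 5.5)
The plan is to write the size of a cylinder $\cH(\bs k)[C]$ as a product of binomials and compare consecutive cylinders directly. For an admissible $C$ with $c_j=|C\cap X_j|$ we have
\[
|\cH(\bs k)[C]|=\prod_{j=1}^p\binom{n_j-c_j}{k_j-c_j}.
\]
Adding $u\in X_i\setminus C$ changes only the $i$th factor. Since $C\cup\{u\}$ is admissible, $c_i+1\le k_i$, so both binomials are positive. The ratio is then
\[
\binom{n_i-c_i-1}{k_i-c_i-1}\Big/\binom{n_i-c_i}{k_i-c_i}=\frac{k_i-c_i}{n_i-c_i}.
\]
For the lower bound, $k_i-c_i\ge1$ and $n_i-c_i\le n_i$ give a ratio of at least $1/n_i$. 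For the upper bound, $0\le c_i\le k_i$ makes $(k_i-c_i)/(n_i-c_i)$ nonincreasing in the numerator's slack, so it is at most $k_i/n_i$. Then $k_i/n_i\le K/n_*\le K/(n_*-K)=\alpha$, using $n_*>K$.

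For $S_{r+1}\le\alpha S_r$, take an admissible $(r+1)$-set $D$ attaining $S_{r+1}$ and delete any point $u\in D$. The set $C=D\setminus\{u\}$ is admissible of size $r$. By \eqref{eq:HM-one-point-ratio},
\[
S_{r+1}=|\cH(\bs k)[D]|\le\alpha|\cH(\bs k)[C]|\le\alpha S_r.
\]

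For the chain, I will build the sets greedily. Set $C^{(0)}=\varnothing$. Given $C^{(r)}$, add a point $u$ from a part $X_i$ with $c_i<k_i$ that maximizes $(k_i-c_i)/(n_i-c_i)$. The key observation is that $|\cH(\bs k)[C]|$ depends only on the vector $(c_j)$, and equals
\[
\prod_j\binom{n_j}{k_j}\cdot\prod_j\prod_{s=0}^{c_j-1}\frac{k_j-s}{n_j-s}.
\]
So maximizing over admissible $r$-sets amounts to choosing $r$ terms from the $p$ sequences $\rho_{j,s}=(k_j-s)/(n_j-s)$, $0\le s<k_j$, taking a prefix of each sequence and maximizing the product. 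Each sequence is nonincreasing in $s$, because $k_j\le n_j$ gives $(k_j-s)/(n_j-s)\ge(k_j-s-1)/(n_j-s-1)$. Hence the optimum is obtained by taking the $r$ largest values overall, with ties broken consistently. The greedy procedure does exactly this, so it yields nested optimal sets with $|\cH(\bs k)[C^{(r)}]|=S_r$ for every $r\le K$.

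The main obstacle is justifying this greedy/exchange step carefully. Any prefix choice of total length $r$ has product at most the product of the $r$ globally largest terms. The greedy prefix choice realizes those terms because the sequences are monotone. This is a standard exchange argument.
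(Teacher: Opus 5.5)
Your proof is correct and follows the paper's argument. It uses the same product-of-binomials ratio, the same deletion step giving $S_{r+1}\le\alpha S_r$, and the same greedy choice of the largest marginal ratio $(k_j-s)/(n_j-s)$ for the nested chain; the only cosmetic difference is that you bound the ratio by $k_i/n_i\le K/n_*\le\alpha$, whereas the paper bounds the numerator by $K$ and the denominator by $n_*-K$ directly.
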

\begin{proof}
	A direct calculation gives the equality in \eqref{eq:HM-one-point-ratio}. 
	Since $C\cup\{u\}$ is admissible, we have $|C\cap X_i|+1\le k_i$. 
	This together with $n_i-|C\cap X_i|\le n_i$ yields the lower bound.
	The upper bound follows from $k_i-|C\cap X_i|\le K$ and $n_i-|C\cap X_i|\ge n_*-K$. 
	Furthermore, assuming in addition that $|C\cup\{u\}|=r+1$ and $|\cH(\bs k)[C\cup\{u\}]|=S_{r+1}$, we obtain $S_{r+1}\le\alpha|\cH(\bs k)[C]|\le\alpha S_r$.

	 For each part, list the marginal ratios
	\[
	\rho_{i,j}=\frac{k_i-j}{n_i-j}\qquad(0\le j<k_i).
	\]	
	Each list is strictly decreasing.
Starting with $C^{(0)}=\varnothing$, construct $C^{(r+1)}$ from $C^{(r)}$ by adding a point corresponding to the largest unused marginal. 
Then $C^{(r)}\subseteq C^{(r+1)}$. 
If the selected point corresponds to $\rho_{i,j}$, then $|\cH(\bs k)[C^{(r+1)}]|=\rho_{i,j}|\cH(\bs k)[C^{(r)}]|$. 
Since $\rho_{i,j}>0$, if  $C^{(r)}$ is admissible, then $C^{(r+1)}$ is also admissible. 
The marginals used after $r$ steps are exactly the $r$ largest members of the combined lists.  
This, together with the identity
\[
\frac{|\cH(\bs k)[D]|}{|\cH(\bs k)|}
=\prod_{i=1}^p\prod_{j=0}^{|D\cap X_i|-1}\rho_{i,j}
\]
for every admissible $r$-set $D$, yields  $|\cH(\bs k)[C^{(r)}]|=S_r$. 
In summary, the sets $C^{(0)},C^{(1)},\ldots,C^{(K)}$ form the desired sequence.
\end{proof}

Put $\beta=K-t+1$. 
We next derive an upper bound for the size of a $t$-intersecting family. 

\begin{lemma}\label{lem:HM-cover-growth}
	Assume $n_*>K$ and put $\alpha=K/(n_*-K)$. 
	Let $\cF\subseteq\cH(\bs k)$ be a nonempty $t$-intersecting family, and $m=\tau_t(\cF)\ge t+1$. 
	Suppose $\beta\alpha\le1$.  
	For every nonempty $\cG\subseteq\cF$,
	\begin{equation}\label{eq:HM-cover-growth}
		|\cG|\le\binom mt \beta^{m-t}S_m.
	\end{equation}
	Let $\cR$ consist of the members of $\cF$ containing no minimum
	admissible $t$-cover. If $m<K$, then
	\begin{equation}\label{eq:HM-cover-remainder}
		|\cR|\le\binom mt \beta^{m-t+1}S_{m+1}.
	\end{equation}
	If $m=K$, then $\cR=\varnothing$.
\end{lemma}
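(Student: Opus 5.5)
We prove \eqref{eq:HM-cover-growth} by the standard branching (kernel) argument, run from a minimum admissible $t$-cover. Fix an admissible $t$-cover $C_0$ of $\cF$ with $|C_0|=m$. For every $F\in\cG$ we have $|F\cap C_0|\ge t$, so $F$ contains some $t$-subset of $C_0$. There are $\binom mt$ such subsets, so it suffices to show that for each $t$-set $T$ (admissible, since it lies inside $F$) the number of members of $\cG$ containing $T$ is at most $\beta^{m-t}S_m$.

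To do this, we prove by downward induction the following statement. For every admissible set $D$ with $t\le |D|\le m$ that lies inside some member of $\cF$,
\[
|\cG[D]|\le\beta^{m-|D|}S_m .
\]
\emph{Base case and stopping case.} If $|D|\ge m$, then trivially $|\cG[D]|\le|\cH(\bs k)[D]|\le S_{|D|}\le S_m$. Here we use Lemma~\ref{lem:HM-cylinders}, namely $S_{r+1}\le\alpha S_r\le S_r$, which says $S_r$ is nonincreasing. \emph{Inductive step.} If $|D|<m$, then $D$ is admissible and has size below $\tau_t(\cF)$, so $D$ is not a $t$-cover of $\cF$. Hence some $F'\in\cF$ satisfies $|F'\cap D|\le t-1$. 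Every $F\in\cG[D]$ meets $F'$ in at least $t$ points, so it contains a point of $F'\setminus D$. Moreover, we need at most $t-|F'\cap D|$ extra points, but the cruder count suffices: we branch on the at most $K-|F'\cap D|$ points of $F'\setminus D$. This costs a factor that we want to bound by $\beta=K-t+1$. That holds provided $|F'\cap D|\ge t-1$; otherwise we must refine the argument. The refinement is to note that $F\cap F'\setminus D$ has size at least $t-|F'\cap D|$, and to add all of these points at once, choosing a set of size $j=t-|F'\cap D|$ from $F'\setminus D$. This gives at most $\binom{K}{j}$ branches, each of which raises $|D|$ by $j$. We then need $\binom{K}{j}\le\beta^{j}$ in the induction, which can fail. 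In that case the key point is to instead add one point at a time, taking each step from $F'\setminus D$ with $|F'\setminus D|\le K-|F'\cap D|$. The honest bound is $\beta$ only when $|F'\cap D|\ge t-1$.

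This branching-factor step is the main obstacle. The fix I would try first is the following. Since $D\supseteq T$ with $|T|=t$ and $F'$ is $t$-intersecting with a member containing $D$, we can choose $F'$ and then bound the branches by the points of $F'$ outside $D$ that are needed. We use $|F'\setminus D|\le K-|F'\cap D|$ and the fact that each branch adds one point. Summing over branches $D\cup\{u\}$ gives $|\cG[D]|\le (K-|F'\cap D|)\,\beta^{m-|D|-1}S_m$. If this factor exceeds $\beta$, I would absorb the excess using the ratio $\alpha$: pass to $|\cH(\bs k)[D\cup\{u\}]|$ via \eqref{eq:HM-one-point-ratio}, and use $\beta\alpha\le1$ to compensate at the final level, where cylinders of size larger than $m$ are bounded by $S_{|D|}\le\alpha^{|D|-m}S_m$. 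Here I expect that the hypothesis $\beta\alpha\le1$ is exactly what makes overshooting past size $m$ harmless: a branch that exceeds size $m$ by $s$ loses a factor $\alpha^{s}\le\beta^{-s}$. Note also that branches stay admissible because $D\cup\{u\}\subseteq F$.

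For \eqref{eq:HM-cover-remainder}, take $F\in\cR$. It contains a $t$-subset $T$ of $C_0$, but $C_0\not\subseteq F$. Run the same branching from $D=T$, but stop only at size $m+1$. At each intermediate $D$ with $|D|\le m$, a set $F\in\cR$ containing $D$ either has $D$ not a $t$-cover, so we branch as before, or $D$ is a $t$-cover of size $m$. In the latter case $D$ is a minimum admissible $t$-cover contained in $F$, which is impossible by the definition of $\cR$. So the branching continues one more level, giving $\binom mt\beta^{m+1-t}S_{m+1}$. If $m=K$, every $F\in\cF$ is itself an admissible $t$-cover of size $K=m$, hence a minimum one, and so $\cR=\varnothing$.
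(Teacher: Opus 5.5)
There is a genuine gap in the branching step, exactly where you flagged the obstacle.

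Your final scheme branches one point at a time over $F'\setminus D$. Write $d=t-|F'\cap D|$. Each step then costs $K-|F'\cap D|=\beta+d-1$, which exceeds $\beta$ when $d\ge2$. If you keep branching until the deficit with respect to $F'$ is paid off, the union bound runs over ordered sequences and costs $(\beta+d-1)(\beta+d-2)\cdots\beta>\beta^d$.

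The compensation through $\alpha$ that you propose is not available. With one-point steps from $|D|<m$ you land at size at most $m$ and stop there. So no cylinder of size larger than $m$ ever appears, and no factor $\alpha^s$ is produced. The lemma assumes only $\beta\alpha\le1$, so there is no other slack to spend, and as written your argument does not give $\beta^{m-t}$.

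The missing observation is an exact count, which you overlooked when you discarded the right refinement: $|F'\setminus D|=K-|F'\cap D|=\beta+d-1$, not $K$. Hence adding a $d$-subset $E\subseteq F'\setminus D$ all at once gives
\[
\binom{\beta+d-1}{d}=\prod_{i=1}^{d}\frac{\beta+i-1}{i}\le\beta^d
\]
branches, each raising $|D|$ by $d$. Each factor is at most $\beta$ because $(i-1)(\beta-1)\ge0$.

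This is precisely the paper's proof: backward induction on $s=|C|<m$ for the bound $|\cG[C]|\le\beta^{m-s}S_m$.
\begin{itemize}
\item When $s+d<m$, use the induction hypothesis.
\item When $s+d\ge m$, bound $|\cG[C\cup E]|\le S_{s+d}\le\alpha^{s+d-m}S_m$. This gives $\beta^dS_{s+d}\le\beta^{m-s}(\beta\alpha)^{s+d-m}S_m\le\beta^{m-s}S_m$.
\end{itemize}
Only these multi-point jumps overshoot past $m$, and that is where $\beta\alpha\le1$ is actually used.

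With this count in place, the rest of your plan agrees with the paper and is correct. For $\cR$, you run the induction up to $m+1$, noting that a set $D$ of size $m$ with $\cR[D]\ne\varnothing$ cannot be a $t$-cover. Your argument for the case $m=K$ is also correct.
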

\begin{proof}
Fix a minimum admissible $t$-cover $D$ of $\cF$.
Every member of $\cG$ contains at least $t$ points of $D$, and therefore
\begin{equation*}
	\cG=\bigcup_{A\in\binom Dt}\cG[A].
\end{equation*}

Consider a nonempty slice $\cG[C]$ with $|C|<m$. 
 The set $C$ is admissible, but it is not a $t$-cover of $\cF$.
 Hence there exists an  $F_C\in\cF$ such that $|C\cap F_C|=t-d<t$ for some $d\ge1$. 
 If $G\in\cG[C]$, then  $|G\cap F_C|\ge t$, implying that $G$ contains at least $d$ points of $F_C\setminus C$. 
  Thus
 \[
 \cG[C]\subseteq
 \bigcup_{E\in\binom{F_C\setminus C}{d}}\cG[C\cup E].
 \]
	Since $|F_C\setminus C|=K-t+d=\beta+d-1$, the number of $E\in\binom{F_C\setminus C}{d}$ is $\binom{\beta+d-1}{d}\le \beta^d$.

	We prove, by backward induction on $s<m$, that
	\[
	|\cG[C]|\le\beta^{m-s}S_m
	\]
   for every set $C$ with $|C|=s$ and $\cG[C]\ne\varnothing$.
	Suppose that one application of the decomposition produces at most $\beta^d$ prescribed sets of size $s+d$.
	If $s+d\ge m$, then repeated application of $S_{q+1}\le\alpha S_q$ gives
	\[
	|\cG[C]|\le\beta^dS_{s+d}\le\beta^d\alpha^{s+d-m}S_m=\beta^{m-s}(\beta\alpha)^{s+d-m}S_m\le\beta^{m-s}S_m.
	\]
	In particular, if $s=m-1$, then $s+d\ge m$ and hence $|\cG[C]|\le\beta S_m$. 
	If $s+d<m$, then the induction hypothesis gives 
	\[
	|\cG[C]|\le\beta^d\beta^{m-s-d}S_m=\beta^{m-s}S_m.
	\]
	Applying the estimate above with $C=A$ and $s=t$, and summing over the
	$\binom mt$ choices of $A\in\binom Dt$, we obtain \eqref{eq:HM-cover-growth}.
	
If $m=K$, then every member of $\cF$ is  a minimum admissible $t$-cover, and hence $\cR=\varnothing$. 
Assume $m<K$. 
We apply the preceding argument to $\cR$, with $m+1$ in place of $m$. 
It remains only to verify that the decomposition is available when $|C|=m$. 
By the definition of $\cR$, if $\cR[C]\ne\varnothing$, then $C$ is not a $t$-cover of $\cF$, and hence the preceding decomposition can still be applied to $\cR[C]$. 
The same backward-induction argument therefore gives
\[
|\cR[A]|\le\beta^{m+1-t}S_{m+1}
\]
for every $A\in\binom Dt$. 
Summing over the $\binom mt$ choices of $A\in\binom Dt$ proves \eqref{eq:HM-cover-remainder}.
\end{proof}

\subsection{Proof of Theorem \ref{thm:closed-HM}}

We first analyze the family of  minimum admissible $t$-covers of a $t$-intersecting family with $t$-covering number $t+1$. 
For a nonempty $t$-intersecting subfamily $\cF$ of $\cH(\bs k)$ with $\tau_t(\cF)=t+1$, put
\[
\cC^{\rm adm}_{t+1}(\cF)=
\left\{C\in\binom X{t+1}: C\text{ is admissible and is a }t\text{-cover of }\cF\right\}.
\]
\begin{proposition}
	\label{prop:cover-geometry}
	Assume $n_i\ge2k_i$ for every $i\in[p]$. 
	Let $\cF\subseteq\cH(\bs k)$ be an inclusion-maximal $t$-intersecting family with $\tau_t(\cF)=t+1$.  
	The following hold.
	\begin{enumerate}[label=\textup{(\roman*)},leftmargin=2.5em]
		\item The family $\cC^{\rm adm}_{t+1}(\cF)$ is $t$-intersecting.
		
		\item If the members of $\cC^{\rm adm}_{t+1}(\cF)$ have no common $t$-subset, then $|\cC^{\rm adm}_{t+1}(\cF)|\ge3$, and there exists a $(t+2)$-set $Z$ such that
		\[
			\cC^{\rm adm}_{t+1}(\cF)\subseteq\binom Z{t+1}
			\qquad\text{and}\qquad
			\cF=\cA_1^\times(Z).
		\]

		\item Suppose that $\cC^{\rm adm}_{t+1}(\cF)$ has size at least $2$ and its members have a common $t$-subset.  
		Then there exists a $t$-set $Q$ and a set $Y\subseteq X\setminus Q$ with $|Y|\ge2$ such that
		\begin{equation}\label{eq:cover-star}
			\cC^{\rm adm}_{t+1}(\cF)
			=\{Q\cup\{y\}:y\in Y\}.
		\end{equation}
	\end{enumerate}
\end{proposition}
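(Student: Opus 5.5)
The backbone is the standard ``extension observation'' used in the proof of Proposition~\ref{prop:product-HM}: if $B$ is admissible and $G\in\cH(\bs k)$, then $n_i\ge2k_i$ yields $F\in\cH(\bs k)[B]$ with $F\cap G=B\cap G$. Combined with the inclusion-maximality of $\cF$, this gives the key fact: an admissible set $C$ is a $t$-cover of $\cF$ if and only if $\cH(\bs k)[C]\subseteq\cF$. One direction holds because every member containing $C$ meets each $F\in\cF$ in at least $t$ points and can therefore be added to $\cF$. For the converse, if $|C\cap F|<t$ for some $F\in\cF$, we extend $C$ to a set $F'$ with $F'\cap F=C\cap F$, so $F'\notin\cF$.

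\textbf{Part (i).} Take $C,C'\in\cC^{\rm adm}_{t+1}(\cF)$ and suppose $|C\cap C'|<t$. Extend $C$ to $F\supseteq C$ and $C'$ to $F'\supseteq C'$ with $F\cap F'=C\cap C'$; this is exactly Lemma~\ref{lem:generators-intersect} with $R=\{\bs k\}$. Both sets lie in $\cF$ by the key fact, yet $|F\cap F'|<t$, which contradicts $t$-intersection.

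\textbf{Part (ii).} Here $\cC:=\cC^{\rm adm}_{t+1}(\cF)$ is a $t$-intersecting family of $(t+1)$-sets without a common $t$-set. Two distinct members always share exactly $t$ points, so $|\cC|\ge3$, since two members would have a common $t$-subset. Take three members $C_1,C_2,C_3$ with no common $t$-subset, and set $Z=C_1\cup C_2$, which has size $t+2$. We must show $C_3\subseteq Z$. If not, then $C_3\supseteq C_1\cap C_2$ plus one outside point would make $C_1\cap C_2$ a common $t$-subset. Otherwise $C_3$ meets $C_1$ and $C_2$ in $t$-sets $C_3\cap C_1\ne C_1\cap C_2$, which forces $|C_3\cap Z|\ge t+1$, hence $C_3\subseteq Z$. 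The same reasoning against any pair places every member inside $Z$ (a sunflower-type argument). Then $\cF\supseteq\bigcup_{C\in\cC}\cH(\bs k)[C]$, and it remains to prove equality with $\cA_1^\times(Z)$. For the inclusion $\cF\subseteq\cA_1^\times(Z)$, each $F\in\cF$ meets every $C\in\cC$ in at least $t$ points, since $C$ is a $t$-cover. If $|F\cap Z|\le t$, then $F$ would contain $F\cap Z$, a $t$-set contained in all of the $\ge3$ covers inside $Z$; I would argue this contradicts the absence of a common $t$-subset, treating the case $|F\cap Z|<t$ as immediate. Hence $\cF\subseteq\cA_1^\times(Z)$. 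Since $\cA_1^\times(Z)$ is $t$-intersecting, inclusion-maximality gives $\cF=\cA_1^\times(Z)$.

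\textbf{Part (iii).} All members share a $t$-set $Q$, so each has the form $Q\cup\{y\}$. Let $Y$ be the set of all such $y$; then $|Y|\ge2$, and \eqref{eq:cover-star} holds by definition once we note that the common $t$-subset is unique. Two distinct members meet in exactly $Q$, so $Q$ is determined. I expect the main obstacle to be part (ii), specifically excluding members of $\cF$ that meet $Z$ in only $t$ points, because the admissibility of $(t+1)$-subsets of $Z$ may fail.
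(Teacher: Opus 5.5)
Your proposal is correct and follows the paper's proof essentially step for step. Part (i) extends two covers to members of $\cH(\bs k)$ that meet exactly in $C\cap C'$; part (ii) places all covers inside a $(t+2)$-set $Z$ (you prove this directly, where the paper cites the classical structure of nontrivial $t$-intersecting families of $(t+1)$-sets) and then rules out $|F\cap Z|\le t$; part (iii) reads off $Q$ and $Y$. The obstacle you anticipate does not arise: if $|F\cap Z|=t$, then $F\cap Z$ lies in every member of $\cC^{\rm adm}_{t+1}(\cF)$, which contradicts the hypothesis directly and uses only the admissible covers already in hand, just as the paper's argument with $Z\setminus\{z_1\},Z\setminus\{z_2\},Z\setminus\{z_3\}$ does.
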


\begin{proof}
	(i) If $C\in\cC^{\rm adm}_{t+1}(\cF)$, then $\cH(\bs k)[C]\subseteq\cF$ since $|C\cap F|\ge t$ for any $F\in\cF$ and $\cF$ is inclusion-maximal.

Take $C,C'\in\cC^{\rm adm}_{t+1}(\cF)$ and put $u_i=|C\cap X_i|$, $u_i'=|C'\cap X_i|$ and $c_i=|C\cap C'\cap X_i|$. 
 Outside $C\cup C'$ there are $n_i-u_i-u_i'+c_i\ge (k_i-u_i)+(k_i-u_i')$ points in part $X_i$.
We consequently extend $C$ and $C'$ to members $F,F'\in\cH(\bs k)$ with $F\cap F'=C\cap C'$.
Then $|C\cap C'|\ge t$ and the desired result follows.

(ii) By the assumption, $|\cC^{\rm adm}_{t+1}(\cF)|\ge3$ is immediate, and the family $\cC^{\rm adm}_{t+1}(\cF)$ is contained in an inclusion-maximal nontrivial $t$-intersecting subfamily of $\binom{X}{t+1}$, which is well known to consist of  all $(t+1)$-subsets contained in a fixed $(t+2)$-subset. 
 Therefore $\cC^{\rm adm}_{t+1}(\cF)\subseteq\binom Z{t+1}$ for some $Z\in\binom{X}{t+2}$.

 The absence of a common $t$-subset forces at least three distinct admissible $t$-covers $Z\setminus\{z_1\}$, $Z\setminus\{z_2\}$ and $Z\setminus\{z_3\}$.
 If  $|F\cap Z|\le t$ for some $F\in\cF$, then  $|F\cap Z|=t$ and $z_1,z_2,z_3\notin F$, which is impossible because $|Z\setminus\{z_1,z_2,z_3\}|=t-1$. 
 Hence $|F\cap Z|\ge t+1$ for every $F\in\cF$. 
The family $\cA_1^\times(Z)$ is $t$-intersecting, and the inclusion-maximality of $\cF$ gives $\cF=\cA_1^\times(Z)$.

	(iii) In this case, every minimum $t$-cover has the unique form $Q\cup\{y\}$, where $Q$ is their common $t$-subset. 
	Let $Y$ be the collection of all added points. 
	Then \eqref{eq:cover-star} follows, and we have $|Y|\ge2$ by $|\cC^{\rm adm}_{t+1}(\cF)|\ge2$.
\end{proof}

We next record a comparison that will be used repeatedly below.  
Recall that \[\Lambda_{K,t}=\max\left\{3t^2\beta^2, 2(t+1)\beta^3\right\}.\]
Assume $K\ge t+2$ and $n_*>K(1+\Lambda_{K,t})$, and put $\alpha=K/(n_*-K)$.
By Lemma~\ref{lem:HM-cylinders}, there are admissible sets $C\subseteq C'$ with $|C|=t+1$, $|C'|=t+2$, and $|\cH(\bs k)[C]|=S_{t+1}$, $|\cH(\bs k)[C']|=S_{t+2}$. 
 We also know that $\cA_1^\times(C')$ is nontrivial. 
Inclusion--exclusion and \eqref{eq:HM-one-point-ratio} give
\begin{align}\label{eq:HM-ball-lower}
	|\cA_1^\times(C')|
	=\sum_{c\in C'}|\cH(\bs k)[C'\setminus\{c\}]|
	-(t+1)S_{t+2}\ge S_{t+1}+(t+1)(\alpha^{-1}-1)S_{t+2}>S_{t+1}.
\end{align}

\begin{lemma}\label{lem:HM-star-exception}
	Assume $K\ge t+2$ and $n_*>K(1+\Lambda_{K,t})$.  
	Let $\cF\subseteq\cH(\bs k)$ be a maximum nontrivial $t$-intersecting family.  
	Suppose that the family of its minimum admissible $t$-covers is
	\begin{equation}\label{eq:HM-minimum-cover-star}
		\{T\cup\{y\}:y\in Y\},
	\end{equation}
	where $|T|=t$, $Y\subseteq X\setminus T$, and $|Y|\ge2$.
	Then every member of $\cF$ that contains $T$ meets $Y$.
\end{lemma}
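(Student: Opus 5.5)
The plan is to argue by contradiction and compare sizes. Suppose some $G\in\cF$ contains $T$ but $G\cap Y=\varnothing$. We then show that $\cF$ is strictly smaller than the nontrivial family $\cH^\times(T,Y)$ (or a suitable modification of it), which contradicts maximality.

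First we pin down the structure of $\cF$. Every $F\in\cF$ meets each minimum admissible $t$-cover $T\cup\{y\}$ in at least $t$ points. Hence a member $F\not\supseteq T$ misses exactly one point $x\in T$, and then it contains all of $Y$, so $F\supseteq(T\setminus\{x\})\cup Y$. Since $\cF$ is nontrivial, such members exist. In particular, some $(T\setminus\{x\})\cup Y$ is admissible, so $(T,Y)$ is feasible and, by Proposition~\ref{prop:product-HM}, $\cH^\times(T,Y)$ is a nontrivial $t$-intersecting family. Each such member $F$ must also meet $G$ in at least $t$ points. Since $|F\cap T|=t-1$ and $G\cap Y=\varnothing$, $F$ must contain a point $g\in G\setminus T$, and there are only $K-t$ choices of $g$. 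Consequently:
\begin{itemize}
\item the part of $\cF$ avoiding some $x\in T$ is contained in the corresponding part of $\cH^\times(T,Y)$ and in addition carries one extra prescribed point from a set of size $K-t$;
\item by Lemma~\ref{lem:HM-cylinders}, its size is at most $(K-t)\alpha N$, where $N$ is the number of members of $\cH^\times(T,Y)$ not containing $T$.
\end{itemize}

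Next we control the members containing $T$. Those meeting $Y$ are exactly the $T$-part of $\cH^\times(T,Y)$, since they contain a minimum cover and $\cF$ is inclusion-maximal. The remaining ones contain $T$, avoid $Y$, and contain no minimum cover, so they lie in the family $\cR$ of Lemma~\ref{lem:HM-cover-growth} with $m=t+1$. That lemma gives
\[
|\cR|\le(t+1)\beta^{2}S_{t+2}.
\]
Putting the two pieces together,
\[
|\cF|\le|\cH^\times(T,Y)|-\bigl(1-(K-t)\alpha\bigr)N+(t+1)\beta^2S_{t+2},
\]
and $(K-t)\alpha\le1/2$ follows from $n_*>K(1+\Lambda_{K,t})$. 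It therefore suffices to show $N>2(t+1)\beta^2S_{t+2}$, or otherwise to sharpen the bound on $\cR$.

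The main obstacle is that $N$ shrinks as $|Y|$ grows: roughly $N\approx S_{t-1+|Y|}$, so for large $|Y|$ the crude bound on $\cR$ is too weak. To handle this, we would refine the estimate on $\cR$ using a fixed non-$T$ member $F'\supseteq(T\setminus\{x\})\cup Y$. Every $R\in\cR$ containing $T$ avoids $Y$, yet must meet $F'$ in at least $t$ points. It therefore contains a point of $F'\setminus(T\cup Y)$, a set of at most $K-t+1-|Y|$ points. It must also meet every other non-$T$ member in a point outside $T\cup Y$. Iterating the backward-induction scheme of Lemma~\ref{lem:HM-cover-growth} with these forced points, we would aim for a bound on $\cR$ of the form $\beta^{j}S_{t+1+j}$ and match it against $N\ge\alpha^{-1}S_{t+|Y|}\cdot(\text{constant})$. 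The $\beta^3$ term in $\Lambda_{K,t}$ suggests this is where the numerical hypothesis is consumed. If that matching fails, we would fall back on the alternative of comparing with $\cA_1^\times(C')$ via \eqref{eq:HM-ball-lower}, where $|\cF|\le S_{t+1}+O(\beta^2 S_{t+2})$ should already lose when $|Y|$ is large.
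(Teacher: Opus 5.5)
\textbf{There is a genuine gap: the decisive inequality is never proved, and the comparison family you chose cannot give it.} Your preliminary structure is correct. Every member not containing $T$ contains some $(T\setminus\{x\})\cup Y$, the pair $(T,Y)$ is feasible, all sets containing $T$ and meeting $Y$ lie in $\cF$, and the non-$T$ part of $\cF$ is at most about $(K-t)\alpha N$.

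\textbf{Why $\cH^\times(T,Y)$ fails.} Write $\cA$ for the members of $\cF$ that contain $T$ and avoid $Y$. Comparing with $\cH^\times(T,Y)$ trades all of $\cA$ against $N$. Under everything your argument uses, $\cA$ can be far larger than $N$. Take $p=1$, $t=1$, $k\ge5$, $n$ large, $T=\{x_0\}$, $|Y|=k-1$, and two further points $w_1,w_2$. Let $\cF$ consist of:
\begin{enumerate}[label=\textup{(\alph*)},leftmargin=2.5em]
\item all $k$-sets containing $x_0$ and meeting $Y$;
\item the two sets $Y\cup\{w_1\}$ and $Y\cup\{w_2\}$;
\item all $k$-sets containing $\{x_0,w_1,w_2\}$ and avoiding $Y$.
\end{enumerate}
This family is intersecting, nontrivial and inclusion-maximal, and its minimum admissible covers are exactly $\{x_0,y\}$ with $y\in Y$. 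Yet
\[
|\cF|-|\cH^\times(T,Y)|=\binom{n-k-2}{k-3}+2-(n-k)>0 .
\]
So no sharpening of the bound on $\cA$ (your $\cR$) can give $|\cF|<|\cH^\times(T,Y)|$.

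\textbf{Why the fallback fails.} The estimate $|\cF|\le S_{t+1}+O(\beta^2S_{t+2})$ is valid only when there is a single minimum cover. In the example, $\cF$ contains all sets through $x_0$ meeting $Y$. There are about $(k-1)S_2$ of them, which already exceeds $|\cA_1^\times(C')|\approx3S_2$.

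\textbf{The missing idea: enlarge $Y$ instead of keeping it.}
\begin{itemize}
\item \emph{Comparison family.} Let $\cB$ be the members with $|B\cap T|=t-1$. Pick $A_0\in\cA$, $B_0\in\cB$ and $z\in(A_0\cap B_0)\setminus T$, and compare $\cF$ with $\cH^\times(T,B_0\setminus T)$. Since $Y\subseteq B_0\setminus T$, and every member of $\cA$ meets $B_0$ outside $T$, this family keeps everything in $\cF$ except $\cB\setminus\{B_0\}$. You therefore trade the small family $\cB$, not $\cA$. In the example, $\cH^\times(\{x_0\},Y\cup\{w_1\})$ indeed beats $\cF$.
\item \emph{Gain.} The set $T\cup\{z\}$ is admissible but not a $t$-cover, so some $B^*\in\cB$ misses $z$. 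The members of $\cH(\bs k)[T\cup\{z\}]$ avoiding $B^*\setminus T$ are not in $\cF$ but lie in the new family. There are at least $(1-\beta\alpha)|\cH(\bs k)[T\cup\{z\}]|$ of them.
\item \emph{Loss.} Each $B\in\cB$ contains $(T\setminus\{u\})\cup Y\cup\{v\}$ with $u\in T$ and $v\in A_0\setminus T$. The paper bounds each such cylinder by $2\beta(\beta-2)\alpha(1+\alpha)|\cH(\bs k)[T\cup\{z\}]|$.
\item \emph{Two ingredients absent from your plan.} First, $z$ is chosen in a part of minimum size among all points of $(A\cap B)\setminus T$ with $A\in\cA$ and $B\in\cB$, so each $v$ lies in a part at least as large. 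Second, some $y_0\in Y$ satisfies $|\cH(\bs k)[T\cup\{y_0\}]|\ge S_{t+1}/(2\beta)$. This follows from $|\cF|>S_{t+1}$ together with $|\cA|+|\cB|\le(t+1)\beta^2S_{t+2}$.
\item \emph{Conclusion.} With these, the hypothesis $\alpha<(2(t+1)\beta^3)^{-1}$ makes the gain exceed the loss. This is where the $\beta^3$ term of $\Lambda_{K,t}$ is consumed.
\end{itemize}
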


\begin{proof}
	Put $\alpha=K/(n_*-K)$.  
	The hypothesis gives $\alpha<\Lambda_{K,t}^{-1}$ and $n_i\ge2k_i$ for every $i$. 	
	The definition of $t$-covers implies that every $F\in\cF$ satisfies $|F\cap T|\ge t-1$, and equality forces $Y\subseteq F$. 
	Put
\begin{align*}
	\cU_Y&=\{F\in\cH(\bs k):T\subseteq F,\ F\cap Y\ne\varnothing\},\\
	\cA&=\{A\in\cF:T\subseteq A,\ A\cap Y=\varnothing\},\\
	\cB&=\{B\in\cF:|B\cap T|=t-1\}.
\end{align*}
	Every member of $\cU_Y$ $t$-intersects every member of $\cF$ since it shares $T$ with each member of $\cF$ containing $T$, and otherwise shares $t-1$ points of $T$ together with a point of $Y$.  
	 The maximality of $\cF$ therefore gives
	$\cU_Y\subseteq\cF$, and hence
	\begin{equation*}
		\cF=\cU_Y\mathbin{\dot\cup}\cA\mathbin{\dot\cup}\cB.
	\end{equation*}
Here $\cB\ne\varnothing$ and every $B\in\cB$ contains $Y$. 
Then $2\le|Y|\le K-t+1=\beta$. 
Since $\cF$ is maximum, the family $\cA_1^\times(C')$ fixed above and \eqref{eq:HM-ball-lower} give $|\cF|\ge|\cA_1^\times(C')|>S_{t+1}$.

The members of $\cA\cup\cB$ contain no $t$-cover from \eqref{eq:HM-minimum-cover-star}.
Lemmas~\ref{lem:HM-cylinders}  and~\ref{lem:HM-cover-growth} together with $\alpha<\Lambda_{K,t}^{-1}\le(2(t+1)\beta^3)^{-1}$ yield
\begin{align*}
	|\cA|+|\cB|&\le(t+1)\beta^2S_{t+2}
	\le(t+1)\beta^2\alpha S_{t+1},                         \\
	|\cU_Y|&\ge\bigl(1-(t+1)\beta^2\alpha\bigr)S_{t+1}
	\ge\tfrac12S_{t+1}.
\end{align*}
By $\cU_Y=\bigcup_{y\in Y}\cH(\bs k)[T\cup\{y\}]$, we have
\[
|\cU_Y|\le\sum_{y\in Y}|\cH(\bs k)[T\cup\{y\}]|.
\]
	Together with $|Y|\le \beta$, this shows that some $y_0\in Y$ satisfies
\begin{equation*}
	|\cH(\bs k)[T\cup\{y_0\}]|\ge\frac{S_{t+1}}{2\beta}.
\end{equation*}

Suppose for contradiction that $\cA\ne\varnothing$. 
Observe that $(A\cap B)\setminus T\neq\varnothing$ for any $A\in\cA$ and $B\in\cB$.
 Let
\[
\Omega=\bigcup_{A\in\cA,\,B\in\cB}(A\cap B)\setminus T.
\]
Choose a part $X_r$ meeting $\Omega$ for which $n_r$ is minimum, and then choose $A_0\in\cA$, $B_0\in\cB$ and
\[
z\in\bigl((A_0\cap B_0)\setminus T\bigr)\cap X_r.
\]
It follows from $A_0\cap Y=\varnothing$ that $z\notin Y$. 
We also have $T\cup\{z\}\subseteq A_0$, and  this set is therefore admissible.  
 By \eqref{eq:HM-minimum-cover-star}, it is not a $t$-cover of $\cF$. 
  Hence there exists $B^\ast\in\cF$ such that $|B^\ast\cap(T\cup\{z\})|<t$.
 Then
\begin{equation*}
	|B^\ast\cap T|=t-1,\qquad z\notin B^\ast,\qquad Y\subseteq B^\ast.
\end{equation*}

	Let
\[
\cD(B_0)=\{D\in\cH(\bs k)[T]\setminus\cF:
D\cap(B_0\setminus T)\ne\varnothing\}.
\]
Every member of $\cH(\bs k)[T\cup\{z\}]$ disjoint from $B^\ast\setminus T$ belongs to $\cD(B_0)$ since it meets $B_0\setminus T$ at $z$, but its intersection with $B^\ast$ is $B^\ast\cap T$, which has size $t-1<t$. 
For each $w\in B^\ast\setminus T$, if $T\cup\{z,w\}$ is not admissible, then  the family $\cH(\bs k)[T\cup\{z,w\}]$ is empty; otherwise it has size  at most $\alpha|\cH(\bs k)[T\cup\{z\}]|$ by \eqref{eq:HM-one-point-ratio}. 
Since $|B^\ast\setminus T|=\beta$, we have
\begin{equation*}
	|\cD(B_0)|\ge(1-\beta\alpha)
	|\cH(\bs k)[T\cup\{z\}]|.
\end{equation*}

	For $B\in\cB$, let $u$ be the unique point of $T\setminus B$.
Since $|A_0\cap B|\ge t$, choose $v\in(A_0\setminus T)\cap B$.
There are at most $t(\beta-1)$ pairs $(u,v)$, and
\begin{equation*}
	(T\setminus\{u\})\cup Y\cup\{v\}\subseteq B.
\end{equation*}
	Fix $y_1\in Y\setminus\{y_0\}$, and suppose $v\in X_j$. 
  Since $v\in\Omega$, the choice of $r$ gives $n_j\ge n_r$.  
We next show
\begin{equation}
	|\cH(\bs k)[(T\setminus\{u\})\cup Y\cup\{v\}]|\le
	\frac{\beta-2}{n_j-K}S_{t+1}
	\le\frac{2\beta(\beta-2)}{n_r-K}
	|\cH(\bs k)[T\cup\{y_0\}]|.\label{eq:HM-fixed-branch-1}
\end{equation}
Put $E=(T\setminus\{u\})\cup\{y_0,y_1\}$. 
	This set is  admissible because $E\subseteq B$.  
Also $E\cup\{v\}\subseteq B$, and hence
\[
\cH(\bs k)[(T\setminus\{u\})\cup Y\cup\{v\}]
\subseteq\cH(\bs k)[E\cup\{v\}].
\]	
	Since $k_j-|E\cap X_j|\le K-|E|=\beta-2$, we have
\[
\frac{|\cH(\bs k)[E\cup\{v\}]|}{|\cH(\bs k)[E]|}=\frac{k_j-|E\cap X_j|}{n_j-|E\cap X_j|}
\le\frac{\beta-2}{n_j-K}.
\]
This together with $|\cH(\bs k)[E]|\le S_{t+1}$ proves the first inequality in \eqref{eq:HM-fixed-branch-1}.
The second inequality in \eqref{eq:HM-fixed-branch-1} follows from $S_{t+1}\le2\beta|\cH(\bs k)[T\cup\{y_0\}]|$ and $n_j\ge n_r$.

By  \eqref{eq:HM-one-point-ratio}, we have
\[
\frac{|\cH(\bs k)[T\cup\{y_0\}]|}
{|\cH(\bs k)[T\cup\{z\}]|}
=\frac{|\cH(\bs k)[T\cup\{y_0\}]|/|\cH(\bs k)[T]|}{|\cH(\bs k)[T\cup\{z\}]|/|\cH(\bs k)[T]|}\le\alpha n_r.
\]
This together with \eqref{eq:HM-fixed-branch-1} and
\[
\frac{n_r}{n_r-K}=1+\frac{K}{n_r-K}\le1+\alpha,
\]
yields 
\begin{equation*}
	|\cH(\bs k)[(T\setminus\{u\})\cup Y\cup\{v\}]|
	\le2\beta(\beta-2)\alpha(1+\alpha)
	|\cH(\bs k)[T\cup\{z\}]|.
\end{equation*}
Summing over the pairs $(u,v)$ gives
\begin{equation*}
	|\cB|\le2t\beta(\beta-1)(\beta-2)\alpha(1+\alpha)
	|\cH(\bs k)[T\cup\{z\}]|.
\end{equation*}
	Write $\eta=t\beta(\beta-1)(\beta-2)$. 
 By $\alpha<\Lambda_{K,t}^{-1}\le(2(t+1)\beta^3)^{-1}$, we obtain $2\eta\alpha<1$ and
\[
\beta\alpha+2\eta\alpha(1+\alpha)
<(\beta+2\eta+1)\alpha<1.
\]
	Thus $|\cD(B_0)|>|\cB|$.

	Finally consider $\cH^\times(T,B_0\setminus T)$.  
Since $B_0\in\cB$, there exist distinct $y,y'\in Y\subseteq B_0\setminus T$. 
Observe that both $T\cup\{y\}$ and $T\cup\{y'\}$ are admissible. 
	By \eqref{eq:cylinder-core}, we have
\[
\bigcap_{F\in\cH(\bs k)[T\cup\{y\}]\cup\cH(\bs k)[T\cup\{y'\}]}F=(T\cup\{y\})\cap(T\cup\{y'\})=T.
\]
Then by $|T\cap B_0|=t-1$ and $B_0\in\cH^\times(T,B_0\setminus T)$, we know $\cH^\times(T,B_0\setminus T)$ is a nontrivial $t$-intersecting family. 
By $Y\subseteq B_0\setminus T$, every member of $\cU_Y$ belongs to $\cH^\times(T,B_0\setminus T)$. 
 Every $A\in\cA$ also belongs to $\cH^\times(T,B_0\setminus T)$, since $T\subseteq A$ and $A\cap(B_0\setminus T)\ne\varnothing$. 
  We further conclude 
 \[
 \cF\setminus\cH^\times(T,B_0\setminus T)
 \subseteq\cB\setminus\{B_0\},
 \qquad
 \cD(B_0)\subseteq\cH^\times(T,B_0\setminus T)\setminus\cF.
 \]
Therefore
\[
|\cH^\times(T,B_0\setminus T)|-|\cF|\ge |\cD(B_0)|-|\cB\setminus\{B_0\}|>0.
\]
This contradicts the maximality of $\cF$ and proves $\cA=\varnothing$.
\end{proof}

\begin{proof}[\bf Proof of Theorem~\ref{thm:closed-HM}]
	Put $\alpha=K/(n_*-K)$.
The hypothesis  $n_*>K(1+\Lambda_{K,t})$ gives $\alpha<\Lambda_{K,t}^{-1}$. 
In particular, $\beta\alpha<1$, $n_i\ge2K$, and $n_i\ge2k_i+2$ for every $i$.

For every $(\bs a,\bs\ell)\in\cP_t(\bs k)$, the condition \eqref{eq:HM-profile-exception} gives $a_i+\ell_i\le k_i+1$ for every $i\in[p]$. 
Since $n_i\ge2K\ge k_i+1$, we may choose disjoint sets $T,Y\subseteq X$ with
\[
|T\cap X_i|=a_i,\qquad |Y\cap X_i|=\ell_i
\qquad(i\in[p]).
\]
Conditions~\eqref{eq:HM-profile-cover} and \eqref{eq:HM-profile-exception} ensure that $(T,Y)$ is feasible.
Proposition~\ref{prop:product-HM} therefore gives a nontrivial $t$-intersecting family $\cH^\times(T,Y)$ of size $H^\times_{\bs n,\bs k}(\bs a,\bs\ell)$.

	Let $\cF$ be a maximum nontrivial $t$-intersecting family and  $m=\tau_t(\cF)$. 
 The nontriviality of $\cF$ gives $m\ge t+1$. 
Suppose first that $m\ge t+2$.  
For $t\le r\le K$, write
\[
f_r=\binom rt \beta^{r-t}S_r.
\]
	For $t+2\le r\le K-1$, Lemma~\ref{lem:HM-cylinders} and $\alpha<(3t^2\beta^2)^{-1}$ give
\[
\frac{f_{r+1}}{f_r}
\le\frac{r+1}{r+1-t}\beta\alpha
\le\frac{t+3}{3}\beta\alpha
<\frac{t+3}{9t^2\beta}<1.
\]
We also have
\[
\binom{t+2}{2}\beta^2S_{t+2}
\le3t^2\beta^2\alpha S_{t+1}<S_{t+1}.
\]
These together with Lemma~\ref{lem:HM-cover-growth} and \eqref{eq:HM-ball-lower} yield
\begin{equation*}
	S_{t+1}<|\cF|\le f_m\le f_{t+2}
	=\binom{t+2}{2}\beta^2S_{t+2}<S_{t+1},
\end{equation*}
a contradiction. 
Thus $m=t+1$.

	Let $\cT$ be the family of minimum admissible $t$-covers. 
We next show  $|\cT|\ge2$.	
Suppose for contradiction that $|\cT|=1$ and $\cT=\{C\}$.
It follows from Lemma \ref{lem:HM-cover-growth} that $|\cF\setminus\cF[C]|\le(t+1)\beta^2S_{t+2}$. 
This together with \eqref{eq:HM-ball-lower} yields 
\begin{equation*}
	S_{t+1}+(t+1)(\alpha^{-1}-1)S_{t+2}\le|\cF|=|\cF[C]|+|\cF\setminus\cF[C]|\le S_{t+1}+(t+1)\beta^2S_{t+2}.
\end{equation*}
Hence $\alpha^{-1}-1\le\beta^2$. 
 This contradicts $\alpha^{-1}-1>\beta^2$ from $\alpha<\Lambda_{K,t}^{-1}\le(3t^2\beta^2)^{-1}$.

	Proposition~\ref{prop:cover-geometry} applies because $\cF$ is inclusion-maximal and $n_i\ge2k_i$ for every $i$.  
	 It says that $\cT$ is a $t$-intersecting family of $(t+1)$-sets.
If its members have no common $t$-subset, then $\cF=\cA_1^\times(Z)$ for some $Z\in\binom{X}{t+2}$. 
In view of Proposition \ref{prop:cover-geometry} (ii), we may choose three different admissible $t$-covers $Z\setminus\{z_1\},Z\setminus\{z_2\},Z\setminus\{z_3\}$. 
Put $Y=\{z_1,z_2\}$ and $T=Z\setminus Y$.
Then
\eqref{eq:HM-ball-is-HM} gives
\[
\cF=\cA_1^\times(Z)=\cH^\times(T,Y).
\]
The three displayed $t$-covers show that $T\cup\{y\}$ is admissible for every $y\in Y$. 
On the other hand, since $Z\setminus\{z_3\}=(T\setminus\{z_3\})\cup Y$ is admissible,  the pair $(T,Y)$ is feasible. 
	Thus the corresponding profile belongs to $\cP_t(\bs k)$.

	It remains to treat the case in which every member of $\cT$ contains a fixed $t$-set $T$.  
Proposition~\ref{prop:cover-geometry} gives a set $Y\subseteq X\setminus T$, with $|Y|\ge2$, such that the family of minimum admissible $t$-covers is
\[
\cT=\{T\cup\{y\}:y\in Y\}.
\]
By Lemma~\ref{lem:HM-star-exception}, every member of $\cF$ containing $T$ meets $Y$. 
It follows that \[\cF\subseteq\cH^\times(T,Y).\]
 If a member $B\in\cF$ does not contain $T$, the minimum cover condition gives $|B\cap T|=t-1$ and $Y\subseteq B$. 
 The nontriviality of $\cF$ ensures that such a member exists. 
 Hence the pair $(T,Y)$ is feasible. 
  Proposition~\ref{prop:product-HM} shows that $\cH^\times(T,Y)$ is a nontrivial $t$-intersecting family, and the maximality of $\cF$ gives $\cF=\cH^\times(T,Y)$.
  
  It follows from Proposition~\ref{prop:product-HM} and the maximality of $\cF$ that the profile of $(T,Y)$ maximizes $H^\times_{\bs n,\bs k}(\bs a,\bs\ell)$ over $\cP_t(\bs k)$. 
  By the definition of $\cE_t(\bs k)$ and Lemma~\ref{lem:HM-profile-endpoints}, this profile belongs to $\cE_t(\bs k)$. 
  That is to say, the maximum size of a nontrivial $t$-intersecting subfamily of $\cH(\bs k)$ is $\max_{(\bs a,\bs\ell)\in\cE_t(\bs k)}
  H^\times_{\bs n,\bs k}(\bs a,\bs\ell)$.  
  
  In view of the preceding argument, to finish our proof, it suffices to show that, if the family $\cH^\times(T',Y')$, where $(T',Y')$ is a feasible pair and its profile lies in $\cE_t(\bs k)$, has size $\max_{(\bs a,\bs\ell)\in\cE_t(\bs k)}
  H^\times_{\bs n,\bs k}(\bs a,\bs\ell)$, then it  is a nontrivial $t$-intersecting subfamily of $\cH(\bs k)$. 
  This follows immediately from Proposition~\ref{prop:product-HM} and $\cE_t(\bs k)\subseteq\cP_t(\bs k)$.
\end{proof}

We finish with the two boundary values excluded from Theorem~\ref{thm:closed-HM}.

\begin{proposition}\label{prop:HM-boundary}
	Suppose $1\le k_i<n_i$ for every $i\in[p]$. 
	If $K=t$, then no nontrivial $t$-intersecting subfamily of $\cH(\bs k)$
	exists. 
	If $K=t+1$, then a nontrivial $t$-intersecting subfamily of $\cH(\bs k)$ exists if and only if $k_h\ge2$ for some $h\in[p]$. In this case, every nonempty nontrivial
	$t$-intersecting subfamily $\cF$ of $\cH(\bs k)$ satisfies
	\begin{equation}\label{eq:add-proof2}
	|\cF|
	\le
	\max_{(\bs a,\bs\ell)\in\cE_t(\bs k)}
	H^\times_{\bs n,\bs k}(\bs a,\bs\ell),
	\end{equation}
	and equality holds if and only if
	$\cF=\cH^\times(T,Y)$, where $|Y|=2$ and $(T,Y)$ is a feasible pair whose profile is a maximizer in $\cE_t(\bs k)$.
\end{proposition}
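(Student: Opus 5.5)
The plan is to argue directly from the structure of $\cH(\bs k)$ when $K$ is tiny, without any size assumptions on the $n_i$ beyond $k_i<n_i$.

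\emph{The case $K=t$.} Every member has exactly $t$ points, so two members $F,F'$ must satisfy $|F\cap F'|\ge t=|F|$, forcing $F=F'$. A $t$-intersecting family therefore has at most one member, and is trivial.

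\emph{The case $K=t+1$, existence.} Members are $(t+1)$-sets, and $t$-intersecting means any two distinct members share exactly $t$ points. By the classical description of $t$-intersecting families of $(t+1)$-sets (the fact already used in Proposition~\ref{prop:cover-geometry}(ii)), a nontrivial such family lies in $\binom{Z}{t+1}$ for some $(t+2)$-set $Z$ and has at least three members. Hence a nontrivial family exists iff some $(t+2)$-set $Z$ has at least three admissible $(t+1)$-subsets whose common intersection has size below $t$.
\begin{itemize}
\item If every $k_i=1$, the admissible sets are transversals. Any such $Z$ meets some part twice, say in $u,v$. Every admissible subset $Z\setminus\{z\}$ must omit $u$ or $v$, and since only one point can be removed, at most two subsets are admissible. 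So no nontrivial family exists.
\item If $k_h\ge2$, take a member $F$ with $|F\cap X_h|=k_h$ and add a point $w\in X_h\setminus F$ (possible since $k_h<n_h$) to get $Z$. Then $Z\setminus\{z\}$ is admissible exactly for $z\in Z\cap X_h$, giving $k_h+1\ge3$ admissible covers. These share only $Z\setminus X_h$, which has $t+1-k_h\le t-1$ points. So $\cA_1^\times(Z)$ is nontrivial.
\end{itemize}

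\emph{The case $K=t+1$, bound and extremal families.} Let $\cF$ be nontrivial, with $\cF\subseteq\binom{Z}{t+1}\cap\cH(\bs k)=\cA_1^\times(Z)$. I want to write $\cA_1^\times(Z)$ as $\cH^\times(T,Y)$ with $|Y|=2$ and profile in $\cE_t(\bs k)$ via condition (i).
\begin{itemize}
\item Pick three admissible $t$-covers $Z\setminus\{z_1\},Z\setminus\{z_2\},Z\setminus\{z_3\}$, and set $Y=\{z_1,z_2\}$, $T=Z\setminus Y$.
\item By \eqref{eq:HM-ball-is-HM}, $\cA_1^\times(Z)=\cH^\times(T,Y)$.
\item Feasibility follows as in the proof of Theorem~\ref{thm:closed-HM}. $T\cup\{z_j\}=Z\setminus\{z_{3-j}\}$ is admissible, and $(T\setminus\{z_3\})\cup Y=Z\setminus\{z_3\}$ is admissible.
\item $\sum_i\ell_i=2$, so the profile lies in $\cE_t(\bs k)$ by (i).
\end{itemize}
This gives \eqref{eq:add-proof2}.

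For equality, suppose $|\cF|$ equals the maximum. Then $\cF=\cA_1^\times(Z)=\cH^\times(T,Y)$, since $\cF\subseteq\cA_1^\times(Z)$ and the latter is itself nontrivial, with size bounded by the maximum.

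Conversely, let $(T,Y)$ be feasible with profile a maximizer. Its profile lies in $\cE_t(\bs k)$, and since $K=t+1$ forces $|Y|\le 2$ via $(T\setminus\{x\})\cup Y$ being admissible of size $t-1+|Y|\le K$, we get $|Y|=2$. I then need nontriviality of $\cH^\times(T,Y)$.
\begin{itemize}
\item Proposition~\ref{prop:product-HM} assumed $n_i\ge2k_i$, which is not available here.
\item Instead I check directly. The members containing $T$ are exactly $T\cup\{y\}$ for $y\in Y$, both present by feasibility, and they intersect in $T$.
\item The member $(T\setminus\{x\})\cup Y$ is present and misses $x$.
\end{itemize}
So the common intersection has size at most $t-1$, and $\cH^\times(T,Y)$ is nontrivial.

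The main obstacle I expect is this bookkeeping without the $n_i\ge2k_i$ hypothesis: checking that $\cH^\times(T,Y)$ coincides with $\binom{T\cup Y}{t+1}\cap\cH(\bs k)$ and has the claimed size when $K=t+1$. That is a direct count, since each member is determined by which point of $T\cup Y$ it omits.
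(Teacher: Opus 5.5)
Your proposal is correct and follows essentially the same route as the paper. Both arguments confine a nontrivial family of $(t+1)$-sets to $\binom{Z}{t+1}$ for a $(t+2)$-set $Z$, use three admissible $(t+1)$-subsets of $Z$ to produce a feasible pair with $|Y|=2$ and $\mathcal A_1^\times(Z)=\mathcal H^\times(T,Y)$, and apply the pigeonhole argument when all $k_i=1$. You make explicit two points the paper only sketches: the direct nontriviality check for $\mathcal H^\times(T,Y)$ without assuming $n_i\ge 2k_i$, and the fact that feasibility forces $|Y|=2$.
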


\begin{proof}
	If $K=t$, every two distinct members of $\cH(\bs k)$ meet in fewer than
	$t$ points.  Hence a $t$-intersecting family has at most one member and
	is trivial.
	
	Now let $K=t+1$.  
	Suppose first that $k_h\ge2$ for some fixed $h\in[p]$. 
	We next construct a nontrivial $t$-intersecting subfamily of $\cH(\bs k)$. 
	Since $n_h>k_h$, choose a $(k_h+1)$-set $V\subseteq X_h$ and three distinct points
	$u,v,w\in V$. For every $i\ne h$, fix a $k_i$-set
	$D_i\subseteq X_i$, and put
	\[
	D=\bigcup_{i\ne h}D_i.
	\]
	The three sets
	\[
	F_u=D\cup(V\setminus\{u\}),\qquad
	F_v=D\cup(V\setminus\{v\}),\qquad
	F_w=D\cup(V\setminus\{w\})
	\]
	belong to $\cH(\bs k)$. 
	Any two of them have intersection of size $|D|+k_h-1=K-1=t$, whereas $|F_u\cap F_v\cap F_w|=|D|+k_h-2=K-2=t-1$. 
	Thus $\{F_u,F_v,F_w\}$ is a nontrivial $t$-intersecting family.

	Suppose there exists a nonempty nontrivial $t$-intersecting subfamily $\cF$ of $\cH(\bs k)$. 
	Then $\cF$ is a nontrivial $t$-intersecting subfamily of $\binom{X}{t+1}$, and 	$\cF\subseteq\binom Z{t+1}$ for some $(t+2)$-set $Z$. 
	Moreover, 
	\[
	\cF\subseteq\cH(\bs k)\cap\binom Z{t+1}=\cA_1^\times(Z).
	\]
	The nontriviality of $\cF$ forces $|\cF|\ge3$. 
	Then $Z$ contains at least three distinct admissible $(t+1)$-subsets. 
	Choose two of these subsets, and let $Y$ be the set of the two points of $Z$ omitted by them.
	Put $T=Z\setminus Y$. 
	The profile of $(T,Y)$ lies in $\cE_t(\bs k)$, and \eqref{eq:add-proof2} follows from \eqref{eq:HM-ball-is-HM}. 
	
		Since $K=t+1$, the feasibility of $(T,Y)$ shows that $\cH^\times(T,Y)$ is nontrivial.  
		The characterization of all equality cases follows immediately. 
		We also know that, if $k_1=\cdots=k_p=1$, then $p=t+1$ and $Z$ contains at most two $(t+1)$-admissible subsets, a contradiction. 
		Therefore $k_h\ge2$ for some $h\in[p]$.
\end{proof}

\noindent{\bf Declaration of AI usage.} The authors acknowledge the use of AI tools during the exploratory stage of this project. All mathematical arguments and proofs presented in the final manuscript were developed and rigorously verified by the authors. 
The authors take full responsibility for the content of the manuscript.

\end{document}